\newtheorem{theorem}{Theorem}[section]
\newtheorem{lemma}[theorem]{Lemma}
\newtheorem{proposition}[theorem]{Proposition}
\newtheorem{corollary}[theorem]{Corollary}
\theoremstyle{definition}
\newtheorem{definition}[theorem]{Definition}
\newtheorem{example}[theorem]{Example}
\theoremstyle{remark}
\newtheorem{remark}[theorem]{Remark}
\numberwithin{equation}{section}
\def\R{\mathbb{R}}
\def\P{\mathbb{P}}
\def\E{\mathbb{E}}
\def\SLLN{\operatorname{SLLN}}
\begin{document}

\title [On the $(p,q)$-type SLLN]{On the $(p,q)$-type Strong Law of Large Numbers for Sequences of Independent 
	Random Variables}

\author{L\^{e} V\v{a}n Th\`{a}nh}
\address{Department
	of Mathematics, Vinh University, Nghe An, Vietnam}
\curraddr{}
\email{	levt@vinhuni.edu.vn}
\thanks{\textit{ Key Words and Phrases:} $(p, q)$-type strong law of large numbers, trong law of large numbers, 
	real separable Banach space, stable type $p$ Banach space, complete convergence in mean.}
\thanks{The author is grateful to Professor Andrew Rosalsky (University of Florida) for helpful comments.}

\subjclass[2010]{Primary 60F15; Secondary 60B12, 60G50}

\date{}
\begin{abstract}
	Li, Qi, and Rosalsky (Trans. Amer. Math. Soc., 2016) introduced
a refinement of the Marcinkiewicz--Zygmund strong law of large numbers (SLLN), so-called the $(p,q)$-type SLLN, 
where $0<p<2$ and $q>0$. They obtained sets of necessary and sufficient conditions for
this new type SLLN for two cases: $0<p<1$, $q>p$, and $1\le p<2,q\ge 1$.
This paper gives a complete solution to open problems
raised by Li, Qi, and Rosalsky by providing the necessary and sufficient conditions for the $(p,q)$-type SLLN for 
the cases where $0<q\le p<1$ and $0<q<1\le p<2$.
We consider random variables taking values in a real separable Banach space $\mathbf{B}$, but the results
are new even when $\mathbf{B}$ is the real line.
Furthermore, the conditions for a sequence of random variables 
$\left\{X_n, n \ge 1\right\}$ satisfying the $(p, q)$-type 
SLLN are shown to provide an exact characterization of stable type $p$ Banach spaces.
\end{abstract}

\maketitle

\section {Introduction and Main Results}\label{sec:intro}
Let $\left\{X_n, n \ge 1\right\}$ be a sequence of random variables defined on a probability space $(\Omega,\mathcal{F},\mathbb{P})$, 
and taking values in a real separable Banach space $\textbf{B}$ with norm $\|\cdot\|$. Let 
${\mathcal{F}}_n=\sigma(X_1,\cdots,X_n),\ n\ge 1$. The sequence $\left\{X_n, \mathcal{F}_n, n\geq 1\right\}$ is said to be a quasimartingale (see, e.g., Pisier \cite[p. 55]{Pisier16})
if
$\E(\|X_n\|)<\infty$ for all $ n\ge 1$, and 
\[\sum_{n=1}^\infty\E(\|\E(X_{n+1}|{\mathcal{F}}_n)-X_n\|)<\infty.\]
If the random variables are independent with mean zero, 
then it is easy to see that $\left\{(X_1+\cdots+X_n)/n^{\alpha}, \mathcal{F}_n, n\geq 1\right\}$, $\alpha>0$, 
is a quasimartingale if and only if
\[\sum_{n=1}^\infty \dfrac{\E\left(\left\|X_1+\cdots+X_n\right\|\right)}{n^{1+\alpha}}<\infty.\]

The study of limit theorems for random variables taking values in a Banach space is usually linked to the
notion of ``type'' of the space. We refer to Gin\'{e} and Zinn \cite{GineZinn83}, Hoffmann-J\o rgensen and Pisier \cite{HoffmannPisier}, Kuelbs and Zinn \cite{KuelbsZinn79}, Ledoux and Talagrand  \cite{LedouxTalagrand}, Marcus and Woyczy\'{n}ski \cite{MarcusWoyczynski}, 
Pisier \cite{Pisier} for definitions, equivalent characterizations, properties of a Banach space being of
Rademacher type $p$ or of stable type $p$, $1\le p \le 2$. 

Assume that $\{X,X_n,n\ge 1\}$ is a sequence of independent identically distributed (i.i.d.) $\mathbf{B}$-valued random variables, and $1\le p<2$. 
In order to answer question ``when is $\left\{(X_1+\cdots+X_n)/n^{1/p}, \mathcal{F}_n, n\geq 1\right\}$ a quasimartingale?'', 
Henchner \cite{Hechner}, Hechner and Heinkel \cite{HechnerHeinkel} proved the following striking result. Here and thereafter, $\ln x$ denotes
the logarithm of a positive real number $x$ to the base $\mathrm{e}=2.7182...$.

\begin{proposition}[Henchner \cite{Hechner}, Hechner and Heinkel \cite{HechnerHeinkel}]\label{theorem11}
	Let $1\le p<2$ and $\{X,X_n,n\ge 1\}$ be a sequence of i.i.d. mean zero $\mathbf{B}$-valued random variables. 
	Suppose that the Banach space $\mathbf{B}$ is of stable type $p$. Then
	\begin{equation}\label{m01}
\sum_{n=1}^\infty \dfrac{\E\left(\left\|\sum_{k=1}^{n} X_k\right\|\right)}{n^{1+1/p}}<\infty
	\end{equation}
	if and only if
	\begin{equation*}
	\begin{cases}
	\E\left(\|X\|\ln(1+\|X\|)\right)<\infty & \text{ if }\  p=1,\\[1.5mm]
	\int_{0}^{\infty}\P^{1/p}(\|X\|>t)\mathrm{d} t<\infty & \text{ if }\  1<p<2.
	\end{cases}
	\end{equation*}
\end{proposition}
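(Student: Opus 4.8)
The plan is to prove the two implications of the equivalence separately; only the implication ``moment condition $\Rightarrow$ \eqref{m01}'' uses the stable type $p$ hypothesis. Throughout, write $F(t)=\P(\|X\|>t)$, and reduce at once to the symmetric case: if $\{X_k'\}$ is an independent copy of $\{X_k\}$ then $\E\|S_n\|\le\E\|S_n-S_n'\|\le 2\E\|S_n\|$ (the first inequality by Jensen, using $\E S_n'=0$), the summands $X_k-X_k'$ are symmetric, and $\P(\|X-X'\|>t)$ is comparable to $F$ up to constant factors and dilations, so that neither \eqref{m01} nor the stated moment conditions are affected.

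For sufficiency, split each (symmetrized) summand at level $n^{1/p}$, writing $S_n=S_n'+S_n''$ with $S_n''=\sum_{k=1}^{n}X_k\mathbf{1}(\|X_k\|>n^{1/p})$. Estimate the large-jump part by $\E\|S_n''\|\le n\,\E\big(\|X\|\mathbf{1}(\|X\|>n^{1/p})\big)$. Multiplying by $n^{-1-1/p}$ and summing, Fubini turns the large-jump contribution into $\E\big(\|X\|\sum_{1\le n<\|X\|^{p}}n^{-1/p}\big)$, which is comparable to $\E\big(\|X\|\ln(1+\|X\|)\big)$ when $p=1$ and to $\E\|X\|^{p}$ when $1<p<2$; in the latter case $\int_0^{\infty}F^{1/p}(t)\,\mathrm{d}t<\infty$ already forces $\E\|X\|^{p}<\infty$ by an elementary argument using that $F$ is nonincreasing, so this part of \eqref{m01} converges under the hypothesis. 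The complementary part $S_n'$ is a sum of i.i.d.\ symmetric vectors of norm at most $n^{1/p}$; here one invokes the stable type $p$ property, but not through the Rademacher-type-$p$ bound $\E\|\sum_{k=1}^{n}Y_k\|\le C\big(\sum_{k=1}^{n}\E\|Y_k\|^{p}\big)^{1/p}$, which is too lossy — its contribution to \eqref{m01} would be of order $\sum_n n^{-1}$ — but through the sharp moment inequality that genuinely characterizes stable type $p$, controlling $\E\|\sum_k Y_k\|$ by a weak-$L^{p}$ (Lorentz-type) functional of the summands (see \cite{GineZinn83,LedouxTalagrand,MarcusWoyczynski,Pisier}). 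Applied to the truncated variables and summed against $n^{-1-1/p}$ via Fubini, this contribution is in turn dominated by $\int_0^{\infty}F^{1/p}(t)\,\mathrm{d}t$ when $1<p<2$ and by $\E\big(\|X\|\ln(1+\|X\|)\big)$ when $p=1$, which together with the large-jump part gives \eqref{m01}.

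For the converse, which needs no geometric hypothesis, I would use the L\'evy-type lower bound $\E\|S_n\|\ge\tfrac14\,\E\max_{1\le k\le n}\|X_k\|$, valid for symmetric i.i.d.\ summands, together with $\E\max_{1\le k\le n}\|X_k\|=\int_0^{\infty}\big(1-(1-F(t))^{n}\big)\,\mathrm{d}t$, which is comparable to $\int_0^{\infty}\min(1,nF(t))\,\mathrm{d}t$. Multiplying by $n^{-1-1/p}$ and summing, Fubini evaluates the inner sum over $n$: for $1<p<2$ it is comparable to $F^{1/p}(t)$, so \eqref{m01} forces $\int_0^{\infty}F^{1/p}(t)\,\mathrm{d}t<\infty$; for $p=1$ it is comparable to $F(t)\big(1+\ln(1/F(t))\big)$, and since \eqref{m01} entails $\E\|X\|<\infty$ one has $F(t)\le\E\|X\|/t$, hence $\ln(1/F(t))\ge\ln t-O(1)$, and finiteness of $\int_0^{\infty}F(t)\ln(1/F(t))\,\mathrm{d}t$ then upgrades to $\E\big(\|X\|\ln(1+\|X\|)\big)<\infty$.

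The step I expect to be the main obstacle is the bounded part $S_n'$ in the sufficiency direction: the series $\sum_n n^{-1-1/p}(\cdots)$ sits exactly at the borderline of convergence, so one cannot afford the crude Rademacher-type-$p$ estimate and must combine the sharp stable-type-$p$ inequality with a careful Fubini and partial-summation argument that loses no logarithmic factor. It is at this step, and in the Fubini evaluations generally, that the dichotomy $p=1$ versus $1<p<2$ emerges and that the two moment conditions acquire their precise form.
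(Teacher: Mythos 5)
First, a point of orientation: the paper does not prove Proposition \ref{theorem11} at all --- it is quoted from Hechner and Hechner--Heinkel as motivation, and only its ingredients reappear, adapted, in Section \ref{sec:stable} (Lemmas \ref{Rosalsky02}, \ref{Rosalsky01} and \ref{LQR01}). Measured against that machinery, your necessity half is essentially right and is the standard route: symmetrize, use L\'evy's inequality to get $\E\|S_n\|\ge \tfrac12\E\max_{k\le n}\|X_k\|$, write $\E\max_{k\le n}\|X_k\|\asymp\int_0^\infty\min(1,nF(t))\,\mathrm{d} t$, and evaluate $\sum_n n^{-1-1/p}\min(1,nF(t))$ by Fubini; the upgrade, for $p=1$, from $\int_0^\infty F(t)\bigl(1+\ln(1/F(t))\bigr)\mathrm{d} t<\infty$ to $\E(\|X\|\ln(1+\|X\|))<\infty$ via $F(t)\le\E\|X\|/t$ is fine, and indeed no geometric hypothesis is needed there. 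The large-jump estimate in the sufficiency half and the remark that $\int_0^\infty F^{1/p}(t)\,\mathrm{d} t<\infty$ forces $\E\|X\|^p<\infty$ for $p>1$ are also correct.

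The genuine gap is the truncated part of the sufficiency direction, exactly where you anticipate trouble, and the two devices that close it are missing from your sketch. If you truncate at $n^{1/p}$ and apply the weak-$L^p$ stable-type inequality, the functional to control is $n\sup_{0<t\le n^{1/p}}t^pF(t)$, and $\sup_{0<t\le n^{1/p}}t^pF(t)\ge\sup_{0<t\le 1}t^pF(t)=:c_0>0$ does not decay in $n$ (unless $X=0$ a.s.); the resulting bound $\E\|S_n'\|\le C(c_0n)^{1/p}$ contributes $\sum_n Cc_0^{1/p}n^{-1}=\infty$, so the argument as described fails even when $\int_0^\infty F^{1/p}<\infty$. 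Moreover, using the weak-type inequality at the exponent $p$ itself is too lossy at this borderline whatever the cutoff: even truncating at the quantile $u_n$ one only gets $\E\|S_n'\|\le Cn^{1/p}\sup_{t\le u_n}tF^{1/p}(t)\le Cn^{1/p}\int_0^{u_n}F^{1/p}$, again of exact order $n^{1/p}$. What makes the series converge in Hechner--Heinkel, and in the adaptations proved in this paper (see the chains of inequalities in Lemmas \ref{Rosalsky01} and \ref{LQR01}), is the combination of (a) truncation at the quantile $u_n$ of order $1-1/n$, so that $F\ge 1/k$ on each slice $(u_{k-1},u_k]$, and (b) an auxiliary exponent $r\in(p,2)$ with $\mathbf{B}$ still of stable type $r$ (available by Maurey--Pisier since the set of stable types is open below $2$; this is also how $p=1$ is treated), which gives $\E\|S_n'\|\le Cn^{1/r}\int_0^{u_n}F^{1/r}$ and lets the factor $k^{1/r-1/p}$ coming from $\sum_{n\ge k}n^{-1-1/p+1/r}$ be traded, slice by slice via $F\ge 1/k$, for the exponent upgrade $F^{1/r}\to F^{1/p}$. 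Finally, the range $\|X_k\|>u_n$ (in particular $u_n<\|X_k\|\le n^{1/p}$) must be estimated separately by $n\E(\|X\|\mathbf{1}(\|X\|>u_n))$: for $1<p<2$ its weighted sum is controlled by $\int_0^\infty F^{1/p}$, while for $p=1$ this is precisely the place where $\E(\|X\|\ln(1+\|X\|))<\infty$, and not merely $\E\|X\|<\infty$, is used. Without (a), (b) and this middle-range estimate, the sufficiency direction does not go through.
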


Motivated by the above result, Li, Qi, and Rosalsky \cite{LQR11}, \cite{LQR15} provided conditions for
\begin{equation}\label{m04}
\sum_{n=1}^\infty \dfrac{1}{n}\E\left(\dfrac{\left\|\sum_{k=1}^{n} X_k\right\|}{n^{1/p}}\right)^q<\infty
\end{equation}
for $0<p<2$ and $q>0$. Clearly, \eqref{m04} implies that
\begin{equation}\label{th13}
\sum_{n=1}^{\infty}\dfrac{1}{n}\left(\dfrac{\|\sum_{k=1}^{n}X_k\|}{n^{1/p}}\right)^q <\infty\ \text{ almost surely (a.s.)}.
\end{equation}
Li, Qi, and Rosalsky \cite{LQR15} proved that if \eqref{th13} holds, then
\begin{equation}\label{th14}
\dfrac{\sum_{k=1}^{n}X_k}{n^{1/p}}\longrightarrow 0\ \text{ a.s.,}
\end{equation}
i.e., the sequence $\left\{X, X_n, n\ge 1\right\}$ obeys the Marcinkiewicz--Zygmund SLLN. It is well know that if $1\le p<2$ and 
$\mathbf{B}$ is of Rademacher type $p$, then \eqref{th14} holds if and only if $\mathbb{E}(\|X\|^p)<\infty$ and $\mathbb{E}(X)=0$ 
(see, e.g., de Acosta \cite{Acosta81}). For the case where $0<q<p<2$, Li, Qi, and Rosalsky \cite[Theorem 3]{LQR15} proved
that \eqref{th13} implies $\int_{0}^{\infty}\mathbb{P}^{q/p}(\|X\|^q>t)\mathrm{d} t<\infty,$ which is stronger than $\mathbb{E}(\|X\|^p)<\infty.$ 
Precisely, Li, Qi, and Rosalsky \cite{LQR15} proved the following result.
\begin{proposition}
[Li, Qi, and Rosalsky \cite{LQR15}]\label{prop12} Let $0<p<2$, $q>0$, and let $\left\{X, X_n, n\ge 1\right\}$ be a sequence of i.i.d. 
	random variables taking values in a real separable Banach space $\mathbf{B}$. 
	Then \eqref{m04} is equivalent to \eqref{th13} and
	\begin{equation}\label{m06}
	\begin{cases}
	\int_{0}^\infty\mathbb{P}^{q/p}(\|X\|^q>t)\mathrm{d} t<\infty & \text{ if }\ q<p,\\[1.5mm]
	\mathbb{E}\left(\|X\|^p\ln(1+\|X\|)\right)<\infty & \text{ if }\ q=p,\\[1.5mm]
	\mathbb{E}\left(\|X\|^q\right)<\infty & \text{ if }\ q>p.
	\end{cases}
	\end{equation}
	Furthermore, each of \eqref{m04} and \eqref{th13} implies the Marcinkiewicz-Zygmund SLLN \eqref{th14}. For $0<q<p<2$,  \eqref{m04} and \eqref{th13} are equivalent 
	so that each of them implies that \eqref{th14}
	and \eqref{m06} hold.
\end{proposition}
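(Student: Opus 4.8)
The goal is the equivalence ``\eqref{m04} $\Longleftrightarrow$ \eqref{th13} and \eqref{m06}''; once that is in hand the two ``furthermore'' assertions are almost free. Write $S_n=\sum_{k=1}^{n}X_k$. The implication \eqref{m04}$\,\Rightarrow\,$\eqref{th13} is just Tonelli's theorem, since the expectation of the nonnegative random series in \eqref{th13} equals the series in \eqref{m04}. Granting the equivalence, \eqref{m04}$\,\Rightarrow\,$\eqref{th13}$\,\Rightarrow\,$\eqref{th14} by the quoted result of Li, Qi, and Rosalsky; and for $0<q<p<2$ their \cite[Theorem 3]{LQR15} extracts the first line of \eqref{m06} from \eqref{th13}, so in that range \eqref{th13} alone is equivalent to \eqref{m04} and hence forces both \eqref{th14} and \eqref{m06}.

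\emph{Necessity of \eqref{m06}.} I would symmetrize. With $\{X_n'\}$ an independent copy of $\{X_n\}$, $X_n^{\mathrm s}=X_n-X_n'$ and $S_n^{\mathrm s}=\sum_{k\le n}X_k^{\mathrm s}$, one has $\E\|S_n^{\mathrm s}\|^q\le C_q\E\|S_n\|^q$, so \eqref{m04} gives $\sum_n n^{-1-q/p}\E\|S_n^{\mathrm s}\|^q<\infty$; then L\'evy's inequality $\P(\max_{k\le n}\|X_k^{\mathrm s}\|>t)\le 2\P(\|S_n^{\mathrm s}\|>t)$ yields $\E\max_{k\le n}\|X_k^{\mathrm s}\|^q\le 2\E\|S_n^{\mathrm s}\|^q$, hence $\sum_n n^{-1-q/p}\E\max_{k\le n}\|X_k^{\mathrm s}\|^q<\infty$. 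Writing $G(t)=\P(\|X^{\mathrm s}\|>t)$ and $\E\max_{k\le n}\|X_k^{\mathrm s}\|^q=\int_0^\infty qt^{q-1}(1-(1-G(t))^n)\,\mathrm{d}t$, Tonelli converts this series into $\int_0^\infty qt^{q-1}\Phi(G(t))\,\mathrm{d}t<\infty$ with $\Phi(a)=\sum_{n\ge 1}(1-(1-a)^n)n^{-1-q/p}$. The only real computation is the elementary two-sided estimate obtained by splitting that series at $n\approx 1/a$:
\[
\Phi(a)\ \asymp\ a^{q/p}\ (q<p),\qquad \Phi(a)\ \asymp\ a\ln(\mathrm{e}/a)\ (q=p),\qquad \Phi(a)\ \asymp\ a\ (q>p),\qquad 0<a\le 1 .
\]
Substituting $s=t^q$ turns $\int_0^\infty qt^{q-1}\Phi(G(t))\,\mathrm{d}t<\infty$ into exactly the three lines of \eqref{m06} for $X^{\mathrm s}$ (for $q=p$ one also reads $\E\|X^{\mathrm s}\|^p<\infty$ off $\Phi(a)\gtrsim a$, which lets $\ln(\mathrm{e}/G(t))$ be replaced by $\ln(1+t)$), and the weak symmetrization inequality $\P(\|X\|-\operatorname{med}\|X\|>t)\le 2\P(\|X^{\mathrm s}\|>t)$ transfers each condition back to $X$, an additive constant being harmless for all three.

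\emph{Sufficiency.} From \eqref{th13} we have \eqref{th14}, hence $X_n/n^{1/p}=(S_n-S_{n-1})/n^{1/p}\to 0$ a.s., so $\E\|X\|^p<\infty$ (and $\E X=0$ when $p\ge 1$) by the Borel--Cantelli lemma, and $S_n/n^{1/p}\to 0$ in probability. I would then truncate each summand at its own level, $X_k=Y_k+Z_k$ with $Z_k=X_k\mathbf{1}(\|X_k\|>k^{1/p})$, and bound $\E\|S_n\|^q$ through the induced split of $S_n$ together with the Hoffmann--J\o rgensen inequality. The ``large-value'' part is governed, after interchanging sums via Tonelli, by $\E\bigl(\|X\|^q\sum_{k<\|X\|^p}k^{-q/p}\bigr)$, whose three growth regimes --- $\asymp t^{1-q/p}$, $\asymp\ln t$, bounded --- are matched term-for-term by the three cases of \eqref{m06} (the deterministic centering sum is handled the same way, and in fact vanishes once one has reduced to symmetric $X$). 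The ``typical-value'' part, i.e.\ the contribution of the centered truncated sum, I would split by Hoffmann--J\o rgensen into a maximal term $\E\max_{k\le n}\|\cdot\|^q$ --- controlled by \eqref{m06} exactly as in the necessity argument, run backwards --- and a quantile term, which is $(\varepsilon_n n^{1/p})^q$ with $\varepsilon_n\to 0$ (from $S_n/n^{1/p}\to 0$ in probability) and which must therefore be shown to satisfy $\sum_n\varepsilon_n^q/n<\infty$.

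I expect this last point to be the crux. It amounts to promoting the almost-sure bound that \eqref{th13} furnishes, namely $\sum_n\frac1n\mathbf{1}(\|S_n\|>\varepsilon n^{1/p})\le\varepsilon^{-q}\sum_n\frac{\|S_n\|^q}{n^{1+q/p}}<\infty$ a.s.\ for each $\varepsilon>0$, to the mean statement $\sum_n\frac1n\P(\|S_n\|>\varepsilon n^{1/p})<\infty$ (equivalently $\sum_n n^{-1}(\operatorname{med}\|S_n\|/n^{1/p})^q<\infty$) --- a converse-Borel--Cantelli assertion for the \emph{dependent} events $\{\|S_n\|>\varepsilon n^{1/p}\}$. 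Since no such converse holds for arbitrary dependent events, one must exploit the i.i.d.\ structure, for instance by comparing each $S_n$ with a partial sum along a geometrically sparse subsequence (whose increments are genuinely independent) through L\'evy's maximal inequality and then summing the blocks. The same mechanism is what legitimizes the de-symmetrization and centering steps glossed over above; with it in place, the remainder is a routine, if lengthy, bookkeeping exercise with the maximal inequalities already used.
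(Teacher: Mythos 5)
This proposition is not proved in the paper at all: it is quoted from Li, Qi, and Rosalsky \cite{LQR15} and only used, so there is no internal proof to compare your attempt with; I can only judge the attempt on its own terms. Your easy steps are sound: \eqref{m04}$\Rightarrow$\eqref{th13} by Tonelli, and your derivation of \eqref{m06} from \eqref{m04} -- symmetrization, L\'evy's inequality for $\max_{k\le n}\|X_k^{\mathrm s}\|$, the two-sided estimate of $\Phi(a)=\sum_{n\ge1}n^{-1-q/p}\bigl(1-(1-a)^n\bigr)$ split at $n\approx 1/a$, the Markov trick to replace $\ln(\mathrm e/G(t))$ by $\ln(1+t)$ when $q=p$, and weak desymmetrization -- is a correct and standard route.

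The genuine gap is the converse half, \eqref{th13} together with \eqref{m06} implies \eqref{m04}, which is the substantive content of the Li--Qi--Rosalsky theorem (their Theorem 7/Lemma 4 machinery, i.e.\ exactly the ``a.s.\ to mean'' inequality that the present paper itself invokes in its Proposition \ref{pqSLLN-implies-SLLN}). You correctly locate this as the crux but do not prove it, and the reduction you sketch would not suffice as stated. First, $\sum_n n^{-1}\P(\|S_n\|>\varepsilon n^{1/p})<\infty$ for every fixed $\varepsilon$ is \emph{not} equivalent to the bound you actually need from Hoffmann--J{\o}rgensen, namely $\sum_n n^{-1}\varepsilon_n^{q}<\infty$ for a fixed-level quantile $\varepsilon_n$ of $\|S_n\|/n^{1/p}$: the fixed-$\varepsilon$ condition only gives $\sum_{n:\,\varepsilon_n>\varepsilon}n^{-1}<\infty$ for each $\varepsilon$, which is compatible with, say, $\varepsilon_n\sim 1/\ln\ln n$ and hence with divergence of $\sum_n n^{-1}\varepsilon_n^{q}$; and recovering the quantile sum by integrating over $\varepsilon$ is circular, since $\int_0^\infty q\varepsilon^{q-1}\sum_n n^{-1}\P(\|S_n\|>\varepsilon n^{1/p})\,\mathrm{d}\varepsilon$ is precisely \eqref{m04}. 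What is required is a Hoffmann--J{\o}rgensen/L\'evy-type argument applied to the whole series functional $\sum_n n^{-1-q/p}\|S_n\|^{q}$ (after truncating each $X_n$ at $n^{1/p}$ and reducing to the symmetric case, where the control $\E\bigl(\sup_n b_n\|Y_n\|^q\bigr)<\infty$ plays the role of the maximal term), which your sketch does not supply. Finally, citing \cite[Theorem 3]{LQR15} and the implication \eqref{th13}$\Rightarrow$\eqref{th14} is circular here, since those are components of the very statement being proved; a self-contained argument would need the generalized Ottaviani inequality (or an equivalent) for that step as well.
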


Motivated by the results in \cite{HechnerHeinkel, LQR11, LQR15}, Li, Qi, and Rosalsky \cite{LQR16} introduced an interesting type of SLLN as follows:

\begin{definition}[Li, Qi, and Rosalsky \cite{LQR16}]{\rm  Let $0<p<2$, $q>0$, and let $\left\{X, X_n, n\ge 1\right\}$ be a sequence of i.i.d. $\mathbf{B}$-random variables. 
		We say that $X$ satisfies the $(p, q)$-type  SLLN (and write $X\in \SLLN(p,q)$) if \eqref{th13} holds.}
\end{definition}

Li, Qi, and Rosalsky \cite{LQR16} obtained sets of necessary and sufficient conditions for $X\in \SLLN(p,q)$ for two cases: $  0<p<1, q>p$ and $1 \le p<2, q\ge 1$ (\cite[Theorems 2.1, 2.2 and 2.3]{LQR16}).
For other cases, 
necessary and sufficient conditions for $X\in\SLLN(p,q)$
remain open problems even when $\mathbf{B}=\R$ as noted by Li, Qi, and Rosalsky \cite[p. 541]{LQR16}.
In this note, we give a complete solution to these open problems by providing the necessary and sufficient conditions for the $(p, q)$-type SLLN for the
remaining cases: $0<q\le p<1$ and $0<q<1\le p<2$. 
Our main results for the real-valued random variable case can be
summarized in the following theorem. In this paper, the indicator function of a set $A$ will be denoted by $\mathbf{1}(A)$. 
\begin{theorem}\label{summary}
	Let $0<p<2$ and $q>0$. Let $\{X_n,n\ge1\}$ be a sequence of independent copies of a real-valued random variable $X$, and $u_n$ the quantile of order $1-1/n$  of $|X|$, $n\ge 1$. 
	The following two statements
	are equivalent:
	
{\rm (i)} $\ X\in \SLLN(p,q)$.

	\[\text{{\rm (ii)}}\begin{cases}
		\int_{0}^\infty\mathbb{P}^{q/p}(|X|^q>t)\mathrm{d} t<\infty & \text{ if }\ 0<q<p<1,\\[1.5mm]
		\mathbb{E}(|X|^p)<\infty \text{ and }\\[1.5mm]
		\sum_{n=1}^\infty \dfrac{\mathbb{E}\left(|X|^p \mathbf{1}(\min\{u_{n}^p,n\}<|X|^p\le n)\right)}{n}<\infty  & \text{ if }\ 0<q=p<1,\\[1.5mm]
		\mathbb{E}(X)=0 \text{ and } \int_{0}^\infty\mathbb{P}^{q/p}(|X|^q>t)\mathrm{d} t<\infty& \text{ if }\ 0<q<1\le p<2.
	\end{cases}
	\]
	The following two statements
	are equivalent:
	
	{\rm $\ $  (iii)} $\ \sum_{n=1}^\infty\dfrac{1}{n}\mathbb{E}\left(\dfrac{|\sum_{i=1}^n X_i|}{n^{1/p}}\right)^q<\infty$.
		\[\text{{\rm (iv)}}\begin{cases}
		\int_{0}^\infty\mathbb{P}^{q/p}(|X|^q>t)\mathrm{d} t<\infty & \text{ if }\ 0<q<p<1,\\[1.5mm]
		\mathbb{E}\left(|X|^p\ln(1+|X|)\right)<\infty & \text{ if }\ 0<q=p<1,\\[1.5mm]
		\mathbb{E}(X)=0\ \text{ and }\ \int_{0}^\infty\mathbb{P}^{q/p}(|X|^q>t)\mathrm{d} t<\infty& \text{ if }\ 0<q<1\le p<2.
		\end{cases}
		\]
\end{theorem}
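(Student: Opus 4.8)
The plan is to reduce everything to the case of symmetric random variables via the usual symmetrization and then to exploit the truncation levels $u_n$ together with classical series criteria. For the equivalence of (i) and (ii), I would first treat the case $0<q<p<1$. Here $\sum_n n^{-1}(|S_n|/n^{1/p})^q<\infty$ a.s. is very restrictive: since $q<p$, the tail condition $\int_0^\infty \P^{q/p}(|X|^q>t)\,\mathrm dt<\infty$ is what one obtains by looking at a single term $|X_n|^q/n^{q/p}$ and applying a Borel--Cantelli argument to $\sum_n n^{-1-q/p}|X_n|^q$, together with the fact (which I would prove by a maximal-inequality / L\'evy-type bound) that control of the partial sums forces control of the increments in this regime. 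Conversely, given the tail condition one has $\E(|X|^p)<\infty$ and more, so the Marcinkiewicz--Zygmund SLLN \eqref{th14} holds and in fact the stronger summability follows by splitting $|S_n|$ into a truncated part (handled by moment bounds, using $q<1$ so that $(\sum|a_i|)^q\le \sum|a_i|^q$) and a tail part (handled by the tail integral). This is essentially the content of Proposition \ref{prop12} restricted to this range, so for $0<q<p<1$ the work is to show (i) alone, without the mean-in-$L^q$ condition \eqref{m04}, already implies (ii); I expect this to follow from the equivalence of \eqref{m04} and \eqref{th13} asserted in Proposition \ref{prop12} for $0<q<p<2$.

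For the genuinely new cases $0<q=p<1$ and $0<q<1\le p<2$, the key device is the quantile $u_n$: one truncates $X_i$ at level $u_n$ (or at $n^{1/p}$, whichever is relevant) inside the $n$-th block, estimates $\E|S_n^{\mathrm{trunc}}|$ via the subadditivity $|\cdot|^q\le\sum|\cdot|^q$ when $p<1$ and via a type-$p$/Rosenthal inequality when $p\ge1$, and matches the resulting series against $\sum_n n^{-1}\E(|X|^p\mathbf 1(\min\{u_n^p,n\}<|X|^p\le n))$. In the range $0<q<1\le p<2$ the condition $\E(X)=0$ must appear — when $p\ge1$ a nonzero mean makes $S_n/n^{1/p}\to\pm\infty$ — and the tail integral $\int_0^\infty\P^{q/p}(|X|^q>t)\,\mathrm dt<\infty$ is equivalent to $\int_0^\infty\P^{1/p}(|X|>t)\,\mathrm dt<\infty$, exactly the Hechner--Heinkel-type condition; so here I would mimic the proof of Proposition \ref{theorem11} but with the weaker $q$-th mean in place of the first moment, the point being that for $q<1$ one loses nothing from Jensen in the wrong direction. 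The delicate point in the case $q=p$ is that the naive condition $\E(|X|^p\ln(1+|X|))<\infty$ is \emph{not} the right one for (i): the extra flexibility of almost-sure (rather than in-mean) summability weakens it to the quantile series in (ii), and showing this weaker series is both necessary and sufficient is the heart of the matter.

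For the equivalence of (iii) and (iv) I would invoke Proposition \ref{prop12} directly: (iii) is exactly \eqref{m04}, which Proposition \ref{prop12} says is equivalent to \eqref{th13} together with \eqref{m06}; so (iii) $\iff$ (i) $\wedge$ \eqref{m06}. Then I combine with the already-established (i) $\iff$ (ii): in the cases $0<q<p<1$ the condition \eqref{m06} is $\int_0^\infty\P^{q/p}(|X|^q>t)\,\mathrm dt<\infty$, which is already part of (ii), so (iii) $\iff$ (i) $\iff$ (ii) $=$ (iv); in the case $q=p$, \eqref{m06} is $\E(|X|^p\ln(1+|X|))<\infty$, which I must show is strictly stronger than the quantile series in (ii)(so (iv) $=$ (ii) $\wedge$ \eqref{m06} collapses to just \eqref{m06}); and in the case $0<q<1\le p<2$, \eqref{m06} is again the tail integral, already in (ii), so (iii) and (i) coincide and equal (iv). Throughout, one needs $\E(X)=0$ to be automatically forced in the $p\ge1$ cases, which is where I would use a centering lemma. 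The main obstacle I anticipate is the sharp two-sided estimation of $\E|S_n^{\mathrm{trunc}}|^q$ against the block-truncated $p$-th moment series in the $q=p<1$ case — getting the quantile $u_n$ to appear with exactly the right threshold $\min\{u_n^p,n\}$ requires a careful decomposition of the range of $|X|$ into the three zones $\{|X|^p\le u_n^p\}$, $\{u_n^p<|X|^p\le n\}$, $\{|X|^p>n\}$ and a tight handling of the middle zone, and I expect this bookkeeping, rather than any single inequality, to be the crux.
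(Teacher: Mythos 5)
Your overall architecture is the same as the paper's (reduce (iii)$\Leftrightarrow$(iv) to Proposition \ref{prop12} plus (i)$\Leftrightarrow$(ii); truncate at the quantile $u_n$ and split the range of $|X|$ into three zones; for $1\le p<2$ use a Hechner--Heinkel-type first-moment inequality, where your Jensen remark for $q<1$ is exactly the paper's Lemma \ref{Rosalsky02}). But there is a genuine gap in the $p<1$ cases: you propose to bound the below-quantile (truncated) block sums by the subadditivity $(\sum|a_i|)^q\le\sum|a_i|^q$. That bound only yields $\E\left(\|S_n^{\mathrm{trunc}}\|^q\right)\le n\,\E(|X|^q)$, so the weighted series becomes $\sum_n n^{-q/p}\E(|X|^q)$, which diverges whenever $q\le p$; for $q=p$ it gives $\sum_n n^{-1}\E\left(|X|^p\mathbf{1}(|X|\le u_n)\right)=\infty$ as soon as $X\not\equiv0$. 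Hence the sufficiency in both cases $0<q<p<1$ and $0<q=p<1$ does not follow from your sketch. The missing ingredient is a genuinely probabilistic estimate for the part truncated at $u_n$: the paper's Lemma \ref{Rosalsky01}, built on the Marcus--Pisier rearrangement inequality \eqref{MP}, gives $\E\left(\|U^{(1)}_{n,n}\|^q\right)\le C\left(n\sup_{t>0}t^{r/q}\sum_{k}\P(\|X_k\|^q>t)\right)^{q/r}$ with $p<r<1$, and it is this bound, fed into Lemma \ref{LQR01}, that makes $\sum_n n^{-1-q/p}\E\left(\|U^{(1)}_{n,n}\|^q\right)$ converge. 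Relatedly, your claim that the case $0<q<p<1$ is ``essentially the content of Proposition \ref{prop12}'' is true only for necessity; the sufficiency $\int_0^\infty\P^{q/p}(|X|^q>t)\,\mathrm{d}t<\infty\Rightarrow X\in\SLLN(p,q)$ is exactly the new work and needs the above inequality.

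The necessity of the quantile series when $q=p<1$, which you rightly call the heart of the matter, is also not ``bookkeeping'': your proposal contains no mechanism for extracting a single-variable expectation from control of the block sums. The paper's route is: from the a.s. summability deduce $\sum_n n^{-2}\E\left(\|U_{n,n}\|^p\right)<\infty$ (via the argument for (3.3) in \cite{LQR16}), subtract the below-quantile part using \eqref{th29} to control the middle-zone sums $U^{(2)}_{n,n}$, then symmetrize, pass from the sum to $\max_{1\le k\le n}\|V_{n,k}\|$ by L\'evy's inequality, from the maximum to $n\,\E\left(\|V_{n,1}\|^p\right)$ by Lemma \ref{lem24} (using that each middle-zone summand is nonzero with probability less than $2/n$), and finally desymmetrize via Lemma \ref{lem25} to reach $\E\left(|X|^p\mathbf{1}(u_n^p<|X|^p\le n)\right)$. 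Without this sum-to-maximum-to-single-term chain the threshold $\min\{u_n^p,n\}$ cannot be produced. Your reduction of (iii)$\Leftrightarrow$(iv) to Proposition \ref{prop12} is correct and coincides with the paper's Corollary \ref{cor01}, except that you still owe the implication $\E\left(|X|^p\ln(1+|X|)\right)<\infty\Rightarrow\sum_n n^{-1}\E\left(|X|^p\mathbf{1}(\min\{u_n^p,n\}<|X|^p\le n)\right)<\infty$, which the paper obtains from the proof of Lemma 5.6 of \cite{LQR11}.
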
 
Versions of the above results in the Banach space setting are also given, and, especially, 
the conditions for the sequence  $\left\{X_n, n \ge 1\right\}$ satisfying the $(p, q)$-type 
SLLN are shown to provide an exact characterization of stable type $p$ Banach spaces.
The latter result was not discovered by Li, Qi, and Rosalsky \cite{LQR16} even for the case
$1\le p<2$, $q\ge 1$. 
The results are obtained by developing some techniques in Hechner and Heinkel \cite{HechnerHeinkel}, and in Li, Qi, and Rosalsky \cite{LQR11,LQR15,LQR16}, and by using some results regarding the notion of complete convergence in mean of order $p$ developed by Rosalsky, Thanh, and Volodin \cite{RTV}.

In the rest of paper, we always consider random variables that taking values in a real separable Banach space $\mathbf{B}$
if no further clarification is needed. For a random variable $X$ and for each $n\ge 1$, $u_n$ denotes the quantile of order $1-1/n$  of $\|X\|$, i.e.,
\[u_n=\inf\left\{t: \P(\|X\|\le t)>1-\dfrac{1}{n}  \right\}=\inf\left\{t: \P(\|X\|> t)<\dfrac{1}{n}  \right\}.\]

We now present Banach space versions of Theorem \ref{summary}.
Theorem \ref{thm14} provides the necessary and sufficient conditions for $X\in\SLLN(p,q)$ for the case where $0<q\le p<1$, while
Theorem \ref{thm15} deals with the case where $0<q<1\le p<2$. 

\begin{theorem}\label{thm14} Let $0<q\le p<1$ and $\left\{X, X_n, n\ge 1\right\}$ be a sequence of i.i.d. random variables. Then
	\begin{equation}\label{th16}
	X\in \SLLN(p,q)
	\end{equation}
	if and only if
	\begin{equation}\label{m03}
	\begin{cases}
	\int_{0}^\infty\mathbb{P}^{q/p}(\|X\|^q>t)\mathrm{d} t<\infty & \text{ if }\ q<p,\\[1.5mm]
	\mathbb{E}(\|X\|^p)<\infty \text{ and }\\
	 \sum_{n=1}^\infty \dfrac{\mathbb{E}\left(\|X\|^p \mathbf{1}(\min\{u_{n}^p,n\}< \|X\|^p\le n)\right)}{n}<\infty  & \text{ if }\ q=p.
	\end{cases}
	\end{equation}
\end{theorem}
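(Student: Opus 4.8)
The plan is to split the argument according to whether $q<p$ or $q=p$; the first range is a short addendum to Proposition~\ref{prop12}, while the second carries the real content. For $q<p$, recall that by Proposition~\ref{prop12} the conditions \eqref{th13} and \eqref{m04} are equivalent and each of them forces $\int_0^\infty\P^{q/p}(\|X\|^q>t)\,\mathrm{d}t<\infty$; so the only new implication is the reverse one, that $\int_0^\infty\P^{q/p}(\|X\|^q>t)\,\mathrm{d}t<\infty$ implies \eqref{m04} (whence \eqref{th13}, a nonnegative series with summable expectations converging a.s.). I would first record the layer-cake estimate $\int_0^\infty\P^{q/p}(\|X\|^q>t)\,\mathrm{d}t\asymp\sum_{n\ge1}n^{-1-q/p}u_n^q$, which exhibits this hypothesis as strictly stronger than $\E\|X\|^p<\infty$ since it forces $u_n^p=o(n)$. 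Rewriting \eqref{m04} as $\sum_{n\ge1}\E\|S_n/n^{1/q+1/p}\|^q<\infty$, i.e.\ as complete convergence in mean of order $q$ of the normalized sums, I would then truncate the $X_k$ at level $k^{1/p}$ (admissible since $\sum_k\P(\|X_k\|>k^{1/p})\asymp\E\|X\|^p<\infty$), symmetrize, and bound $\E\|S_n'\|^q$ by a Hoffmann-J\o rgensen-type inequality in terms of $(\E\|S_n'\|)^q\le\big(n\,\E(\|X\|\mathbf{1}(\|X\|\le n^{1/p}))\big)^q$ and $\E\max_{k\le n}\|X_k'\|^q\lesssim u_n^q+n\int_{u_n}^{n^{1/p}}s^{q-1}\P(\|X\|>s)\,\mathrm{d}s$; dividing by $n^{1+q/p}$, summing, and rearranging (Fubini), each of the resulting series is seen to converge exactly under $\sum_n n^{-1-q/p}u_n^q<\infty$. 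The complete-convergence-in-mean results of Rosalsky, Thanh, and Volodin \cite{RTV} provide a convenient framework for this step.

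For $q=p$ the equivalence has to be proved from scratch, since \eqref{th13} and \eqref{m04} are now genuinely inequivalent: one summand $X_k$ of size $\sim k^{1/p}$ destroys \eqref{m04} but is harmless for \eqref{th13}. For necessity I would use that \eqref{th13} implies the Marcinkiewicz--Zygmund SLLN \eqref{th14} (Proposition~\ref{prop12}), which for $p<1$ forces $\E\|X\|^p<\infty$ with no centering; then, after truncating at level $k^{1/p}$ and symmetrizing, I would pass to dyadic blocks $[2^j,2^{j+1})$ and compare $\sum_n n^{-1}(\|S_n\|/n^{1/p})^p$ with $\sum_j \max_{2^j\le n<2^{j+1}}(\|S_n\|/2^{j/p})^p$ via the L\'{e}vy maximal inequality, and then use the independence of the block increments $S_{2^{j+1}}-S_{2^j}$ together with the lower bound $\P(\|S_n\|\ge t)\ge c\min\{1,n\P(\|X\|\ge t)\}$ valid for symmetric i.i.d.\ sums to convert the almost sure finiteness of the series into the stated condition $\sum_n n^{-1}\E\big(\|X\|^p\mathbf{1}(\min\{u_n^p,n\}<\|X\|^p\le n)\big)<\infty$. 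For sufficiency I would run the reductions in reverse ($\E\|X\|^p<\infty$ again permits the truncation) and apply the estimates of Rosalsky, Thanh, and Volodin block by block, using Fuk--Nagaev and Hoffmann-J\o rgensen bounds calibrated so that no spurious factor $\ln n$ appears; the quantile condition is then precisely what makes $\sum_j\E\max_{2^j\le n<2^{j+1}}(\|S_n'\|/2^{j/p})^p<\infty$, which yields \eqref{th13}.

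The hard part is the case $q=p$. On the necessity side one must extract a sharp tail condition on $X$ from the almost sure convergence of a series whose summands $\|S_n\|$ are strongly dependent, and the key subtlety is the cancellation, on a logarithmic scale, between $\E\big(\|X\|^p\mathbf{1}(\|X\|^p>u_n^p)\big)$ and $\E\big(\|X\|^p\mathbf{1}(\|X\|^p>n)\big)$: it is exactly this cancellation that makes the condition strictly weaker than $\E(\|X\|^p\ln(1+\|X\|))<\infty$, which (together with \eqref{th13}) characterizes \eqref{m04} by Proposition~\ref{prop12}. On the sufficiency side the blockwise upper bounds must be calibrated so as not to lose a factor $\ln n$, and truncating at the quantile level $u_n$ rather than at $n^{1/p}$ is precisely the device that absorbs the portion of $X$ that would otherwise generate such a factor. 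A recurring technical point is that the crude triangle inequality $\|S_n\|\le\sum_{k\le n}\|X_k\|$ is too lossy here, so one must work throughout with the maximal and moment inequalities valid in every real separable Banach space.
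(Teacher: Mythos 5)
Your overall architecture (level/quantile truncations, symmetrization, in‑mean estimates in the spirit of \cite{RTV}, Borel--Cantelli for the big jumps) is close to the paper's, but the two places where the real work happens are not closed. The clearest gap is in your sufficiency chain for $q<p$ (and it resurfaces as the unproved ``calibrated so that no spurious $\ln n$ appears'' step for $q=p$): after truncating at $k^{1/p}$ and symmetrizing, you bound the Hoffmann--J{\o}rgensen level term by $(\E\|S_n'\|)^q\le\bigl(n\,\E(\|X\|\mathbf{1}(\|X\|\le n^{1/p}))\bigr)^q$, and this estimate is quantitatively insufficient. Take $\P(\|X\|>s)=s^{-p}(\ln s)^{-\beta}$ for large $s$ with $p/q<\beta<1/q$ (possible precisely because $p<1$): then $\int_0^\infty\P^{q/p}(\|X\|^q>t)\,\mathrm{d}t<\infty$ since the integrand behaves like $t^{-1}(\ln t)^{-\beta q/p}$ with $\beta q/p>1$, while $n\,\E(\|X\|\mathbf{1}(\|X\|\le n^{1/p}))\asymp n^{1/p}(\ln n)^{-\beta}$, so your series $\sum_n n^{-1-q/p}\bigl(n\,\E(\|X\|\mathbf{1}(\|X\|\le n^{1/p}))\bigr)^q\asymp\sum_n n^{-1}(\ln n)^{-\beta q}=\infty$ because $\beta q<1$. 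Since Theorem \ref{thm14} is stated in an arbitrary separable Banach space and $p<1$, no type improvement over the triangle inequality is available for $\E\|S_n'\|$ (think of $\ell_1$-valued summands on disjoint coordinates), so this loss is not repaired by a better constant: what closes the argument in the paper is truncation at the quantile $u_n$ combined with the Marcus--Pisier rearrangement inequality (Lemma \ref{Rosalsky01}, fed into Lemma \ref{LQR01}), which bounds $\E\|U^{(1)}_{n,n}\|^q$ by $\bigl(n\sup_{t\le u_n^q}t^{r/q}\P(\|X\|^q>t)\bigr)^{q/r}$ for some $r\in(p,1)$, together with a direct estimate of the piece with $\|X_k\|>u_n$ that exploits the exponent $q/p<1$ on the tail. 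For $q=p$ the same device is the whole point: the sub-quantile part is handled by Lemma \ref{LQR01}(iii), the range $(u_n,n^{1/p}]$ exactly by the assumed window condition, and the range $\|X_k\|^p>n$ only almost surely via Borel--Cantelli (it cannot be handled in mean, consistently with Proposition \ref{prop12}); a Fuk--Nagaev-type bound with no log loss is not available in this generality, so invoking it is where your sufficiency argument is missing its key lemma.

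On the necessity for $q=p$, your plan (dyadic blocks, independence of the increments $T_j=S_{2^{j+1}}-S_{2^j}$, and the lower bound $\P(\|S_n\|\ge t)\ge c\min\{1,n\P(\|X\|\ge t)\}$ for symmetric sums) is a genuinely different route from the paper's, which first converts a.s.\ finiteness into finiteness in mean by the argument of (3.3) of \cite{LQR16}, isolates the window part $U^{(2)}_{n,n}$, and then extracts the one-dimensional condition via L\'evy's inequality, Lemma \ref{lem24} (maxima of variables that are nonzero with probability at most $K/n$) and the desymmetrization Lemma \ref{lem25}. Your tail lower bound is the right kind of tool to replace Lemma \ref{lem24}, but the pivotal step is missing: pathwise, the portion of $\sum_n\|S_n\|^p/n^{2}$ over the block $[2^j,2^{j+1})$ does not dominate $2^{-j}\|T_j\|^p$ (there is no pointwise comparison between block averages of $\|S_n\|^p$ and the increment), so before you can apply a three-series/Borel--Cantelli argument to the independent blocks you need a maximal-inequality or a.s.-to-mean conversion of exactly the kind the paper imports from \cite{LQR16}. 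As written, both halves of the hard case rest on these unsupplied steps, so the proposal has genuine gaps despite its broadly correct skeleton.
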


\begin{remark}\label{rem01}
		We make some comments on Theorem \ref{thm14} as follows.
		
		(i) As noted by Li, Qi, and Rosalsky \cite{LQR15}, if
		$X\in \SLLN(p,q)$ for some $q>0$, then $X\in \SLLN(p,q_1)$ for all $q_1>q$. 
		By Theorem \ref{thm14} we will show that, for $0<p<1$, there exists a random variable $X$ such that 
		$X\in\SLLN(p,p)$ but $X\notin\SLLN(p,q)$ for all $0<q<p$ (see Example \ref{t01} in Section \ref{sec:main-results}).
		
		(ii) For the case where $q=p$, each of two conditions 
		$\E(\|X\|^p)<\infty$ and 
		\[\sum_{n=1}^\infty \dfrac{\mathbb{E}\left(\|X\|^p \mathbf{1}(\min\{u_{n}^p,n\}< \|X\|^p\le n)\right)}{n}<\infty\]
		does not imply each other (see Examples \ref{t03} and \ref{t05} in Section \ref{sec:main-results}).

\end{remark}

\begin{theorem}\label{thm15}
	Let $0<q<1\le p<2$ and  $\left\{X, X_n, n\ge 1\right\}$ be a sequence of i.i.d. random variables taking values in a
	separable Banach space $\mathbf{B}$. If $\mathbf{B}$ is of stable type $p$, then
	\begin{equation}\label{m05}
	X\in \SLLN(p,q)
	\end{equation}
	if and only if
	\begin{equation}\label{m07}
	\mathbb{E}(X)=0 \text{ and } \int_{0}^\infty\mathbb{P}^{q/p}(\|X\|^q>t)\mathrm{d} t<\infty.
	\end{equation}
\end{theorem}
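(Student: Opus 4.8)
The plan is to establish the two implications separately; the forward one (that \eqref{m05} forces \eqref{m07}) does not use the stable type $p$ hypothesis, so I would treat it first.

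\emph{Necessity of \eqref{m07}.} Assume $X\in\SLLN(p,q)$, that is, \eqref{th13} holds. Since $0<q<1\le p<2$ we are in the range $0<q<p<2$, so Proposition \ref{prop12} applies and yields both the Marcinkiewicz--Zygmund SLLN \eqref{th14} and, because $q<p$, the bound $\int_0^\infty\P^{q/p}(\|X\|^q>t)\,dt<\infty$. It remains to deduce $\E(X)=0$. From \eqref{th14} and the identity $X_n/n^{1/p}=S_n/n^{1/p}-((n-1)/n)^{1/p}\,S_{n-1}/(n-1)^{1/p}$ one gets $X_n/n^{1/p}\to0$ a.s.; by independence and the second Borel--Cantelli lemma this forces $\sum_{n}\P(\|X\|^p>\varepsilon^p n)<\infty$ for every $\varepsilon>0$, hence $\E(\|X\|^p)<\infty$ and, since $p\ge1$, $\E(\|X\|)<\infty$. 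Mourier's SLLN then gives $S_n/n\to\E(X)$ a.s., while \eqref{th14} together with $1/p\le1$ gives $S_n/n=(S_n/n^{1/p})\,n^{1/p-1}\to0$ a.s.; therefore $\E(X)=0$.

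\emph{Sufficiency.} Suppose now $\mathbf B$ is of stable type $p$ and \eqref{m07} holds. Because $q<p$, Proposition \ref{prop12} reduces $X\in\SLLN(p,q)$ to the mean statement $\sum_{n\ge1}n^{-1}\E(\|S_n\|/n^{1/p})^q<\infty$, with $S_n=\sum_{k=1}^n X_k$. I would first record, by a layer-cake computation and Abel summation (using the monotonicity of $(u_n)$), the equivalence
\[
\int_0^\infty\P^{q/p}(\|X\|^q>t)\,dt<\infty \quad\Longleftrightarrow\quad \sum_{n=1}^\infty\frac1n\left(\frac{u_n}{n^{1/p}}\right)^{\!q}<\infty ,
\]
which identifies $u_n$ as the natural truncation level. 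Fixing $n$ and setting $\bar X_k=X_k\mathbf 1(\|X_k\|\le u_n)$, $\tilde X_k=X_k-\bar X_k$, the subadditivity $(a+b)^q\le a^q+b^q$ (valid as $0<q<1$) gives
\[
\E\|S_n\|^q\le \E\|\sum_{k=1}^n(\bar X_k-\E\bar X_k)\|^q+\|\sum_{k=1}^n\E\bar X_k\|^q+\E\|\sum_{k=1}^n\tilde X_k\|^q ,
\]
and, since $\E X=0$, $\sum_{k=1}^n\E\bar X_k=-n\,\E(X\mathbf 1(\|X\|>u_n))$, so the middle term is at most $(n\,\E(\|X\|\mathbf 1(\|X\|>u_n)))^q$. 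For the middle and last terms (for the last, $q<1$ yields $\E\|\sum_k\tilde X_k\|^q\le n\,\E(\|X\|^q\mathbf 1(\|X\|>u_n))$) I would check, by writing the truncated moments as integrals of $\P(\|X\|>\cdot)$, interchanging the sum and the integral, and comparing against $\sum_n n^{-1-q/p}u_n^q$, that their contributions to $\sum_n n^{-1-q/p}(\cdot)$ are finite, using only the quantile series above and hence only \eqref{m07}.

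The crux is the leading term $\E\|\sum_{k=1}^n(\bar X_k-\E\bar X_k)\|^q$. By Jensen's inequality ($q<1$) it is at most $\big(\E\|\sum_{k=1}^n(\bar X_k-\E\bar X_k)\|\big)^q$, and here the stable type $p$ hypothesis enters: through the complete-convergence-in-mean estimates of Rosalsky, Thanh and Volodin \cite{RTV}, equivalently through de Acosta--type moment inequalities for stable type $p$ spaces (cf. \cite{Acosta81}), one bounds $\E\|\sum_{k=1}^n(\bar X_k-\E\bar X_k)\|$ by $C\,n^{1/p}w_n+C\,u_n$, where the dispersion factor $w_n$ reflects both $\E(\|X\|^p\mathbf 1(\|X\|\le u_n))$ and the genuine gain coming from $p<2$, and is small enough that $\sum_n w_n^q/n<\infty$ is once more a consequence of the quantile series. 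Assembling the three bounds and summing against $n^{-1-q/p}$ completes the proof; taking $\mathbf B=\R$ (which is of stable type $p$ for every $p<2$) recovers the third case of Theorem \ref{summary}.

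\emph{The expected obstacle.} The sensitive step is precisely this last estimate. Crude tools --- for instance the stable type $p$ Rosenthal-type bound $\E\|\sum_k(\bar X_k-\E\bar X_k)\|^p\lesssim n\,\E(\|X\|^p\mathbf 1(\|X\|\le u_n))+u_n^p$, in which $\E(\|X\|^p\mathbf 1(\|X\|\le u_n))$ stays bounded away from $0$ --- are too lossy: they give $\E\|\sum_k(\bar X_k-\E\bar X_k)\|^q\asymp n^{q/p}$ with a fixed constant, and then $\sum_n n^{-1-q/p}\cdot n^{q/p}=\sum_n n^{-1}=\infty$. One has to exploit the full quantitative content of stable type $p$ (this is exactly where Rademacher type $p$ alone would not suffice), so that after dividing by $n^{1+q/p}$ and summing one is thrown back onto $\sum_n n^{-1}(u_n/n^{1/p})^q$. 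Choosing the truncation scale so that the big-jump bound, the centering bound, and this refined bound all telescope against the single series $\int_0^\infty\P^{q/p}(\|X\|^q>t)\,dt$ --- and, in the borderline case $p=1$, working with the more delicate form of the stable type $1$ estimate --- is the technical heart of the argument.
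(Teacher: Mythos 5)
Your necessity argument is fine and is essentially what the paper does (Proposition \ref{prop12} gives the integral condition, and the Marcinkiewicz--Zygmund SLLN with $p\ge 1$ forces $\E(X)=0$); likewise the reduction via Proposition \ref{prop12} to the mean statement, the truncation of $X_k$ at the quantile $u_n$, and the treatment of the tail and centering terms all mirror the paper's scheme (the latter two terms are handled there by adapting Lemma 3.3 of Li, Qi, and Rosalsky \cite{LQR16}, using that every Banach space is of Rademacher type $q$ for $q\le 1$).

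The genuine gap is the leading term $\E\bigl\|\sum_{k\le n}(\bar X_k-\E\bar X_k)\bigr\|^q$, i.e.\ precisely the step you defer as the ``technical heart''. You never state the inequality that is supposed to produce the bound $Cn^{1/p}w_n+Cu_n$, nor what $w_n$ is, and your own ``expected obstacle'' paragraph correctly shows that any bound phrased through the truncated strong moment $\E(\|X\|^p\mathbf 1(\|X\|\le u_n))$ (Rosenthal/de Acosta type) cannot work; the citation of the complete-convergence-in-mean results of \cite{RTV} is also misplaced, since in the paper those are used only for Proposition \ref{pqSLLN-implies-SLLN} and for the converse implication (ii)$\Rightarrow$(i) of Theorem \ref{thm:stable11}, not here. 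What actually closes the argument is a \emph{weak-moment} estimate: stable type $p$ with $p<2$ self-improves to stable type $r$ for some $r\in(p,2)$ (Pisier), the Marcus--Pisier/Hechner--Heinkel inequality in a stable type $r$ space, extended in Lemma \ref{Rosalsky02} to $0<q\le 1<r<2$, gives $\E\|\sum_{k\le n}(\bar X_k-\E\bar X_k)\|^q\le C\bigl(n\sup_{0<t\le u_n^q}t^{r/q}\,\P(\|X\|^q>t)\bigr)^{q/r}$, and then a block summation over the quantile intervals $[u_{k-1}^q,u_k^q)$, where $\P(\|X\|^q>t)\ge 1/k$, exploits the exponent gap $q/p-q/r>0$ to convert the resulting series into $\int_0^\infty\P^{q/p}(\|X\|^q>t)\,\mathrm{d}t<\infty$ (this is Lemma \ref{LQR01}(ii), and note the scaling there is $n^{q/r}$, not $n^{q/p}$, and the final comparison is not simply against $\sum_n n^{-1}(u_n/n^{1/p})^q$). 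Without an inequality of this weak-$L^r$ type --- which is exactly where stable type $p$, as opposed to Rademacher type $p$, enters --- your proof does not close; with it, your outline becomes the paper's proof, which in fact proves the stronger, non-identically-distributed statement (i)$\Rightarrow$(ii) of Theorem \ref{thm:stable11} and deduces Theorem \ref{thm15} from it.
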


Li, Qi, and Rosalsky \cite{LQR16} also provided necessary and sufficient conditions for 
\begin{equation}\label{m09}
\sum_{n=1}^\infty \dfrac{1}{n}\E\left(\dfrac{\left\|\sum_{k=1}^{n} X_k\right\|}{n^{1/p}}\right)^q<\infty
\end{equation}
for
the case where $  0<p<1, q>p$ and for the case where $1 \le p<2, q\ge 1$ (see \cite[Theorem 2.1 and Corollaries 2.2 and 2.3]{LQR16}).
From Theorems \ref{thm14} and \ref{thm15}, we have the following corollary.

\begin{corollary}\label{cor01}
	Let $\left\{X, X_n, n\ge 1\right\}$ be a sequence of i.i.d. random variables taking values in a
	separable Banach space $\mathbf{B}$. 
\begin{itemize}
	\item[ (i)] 	If $0<q\le p<1$, then
	\eqref{m09} is equivalent to
	\begin{equation}\label{m11}
	\begin{cases}
	\int_{0}^\infty\mathbb{P}^{q/p}(\|X\|^q>t)\mathrm{d} t<\infty & \text{ if }\ q<p,\\[1.5mm]
	\mathbb{E}\left(\|X\|^p\ln(1+\|X\|)\right)<\infty & \text{ if }\ q=p.
	\end{cases}
	\end{equation}
	
	\item[ (ii)]  If $0<q<1\le p<2$, and $\mathbf{B}$ is of stable type $p$, then \eqref{m09} is equivalent to
	\begin{equation}\label{m13}
	\mathbb{E}(X)=0\ \text{ and }\ \int_{0}^\infty\mathbb{P}^{q/p}(\|X\|^q>t)\mathrm{d} t<\infty.
	\end{equation}
	
\end{itemize}
\end{corollary}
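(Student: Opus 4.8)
The plan is to deduce the corollary from Theorems \ref{thm14} and \ref{thm15} by ``integrating'' the almost-sure $(p,q)$-type SLLN into an $L^1$-type statement, exactly in the spirit of Proposition \ref{prop12}, whose characterization of \eqref{m04} is via \eqref{th13} plus the integrability condition \eqref{m06}. Concretely, note that \eqref{m09} is precisely condition \eqref{m04} with the $\mathbf{B}$-norm in place of $|\cdot|$, so Proposition \ref{prop12} tells us that \eqref{m09} holds if and only if $X\in\SLLN(p,q)$ (i.e. \eqref{th13} holds) \emph{and} the integrability condition \eqref{m06} holds. Thus for each of the two ranges of $(p,q)$ we must show that ``\,$X\in\SLLN(p,q)$ plus \eqref{m06}\,'' is equivalent to the asserted condition (\eqref{m11} or \eqref{m13}).

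For part (i), let $0<q\le p<1$. When $q<p$, condition \eqref{m06} reads $\E(\|X\|^p\ln(1+\|X\|))<\infty$ if $q=p$ and $\int_0^\infty\P^{q/p}(\|X\|^q>t)\,\mathrm{d}t<\infty$ if $q<p$; but for $q<p$ Theorem \ref{thm14} already gives that $X\in\SLLN(p,q)$ is \emph{equivalent} to $\int_0^\infty\P^{q/p}(\|X\|^q>t)\,\mathrm{d}t<\infty$, which is exactly \eqref{m06}, so \eqref{m09} reduces to that single condition, namely \eqref{m11} in the case $q<p$. When $q=p$, \eqref{m06} becomes $\E(\|X\|^p\ln(1+\|X\|))<\infty$; this already implies $\E(\|X\|^p)<\infty$, and a routine comparison (splitting the integral defining $\E(\|X\|^p\ln(1+\|X\|))$ dyadically and using the elementary bound $\min\{u_n^p,n\}\le n$) shows that $\E(\|X\|^p\ln(1+\|X\|))<\infty$ implies the series condition in \eqref{m03}; conversely the series condition in \eqref{m03} together with $\E(\|X\|^p)<\infty$ need not imply $\E(\|X\|^p\ln(1+\|X\|))<\infty$, but here we are \emph{combining} the SLLN condition with \eqref{m06}, so it suffices to check that \eqref{m03} together with \eqref{m06} collapses to \eqref{m11}. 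The cleanest route is: (a) \eqref{m09} $\Rightarrow$ $X\in\SLLN(p,p)$ and $\E(\|X\|^p\ln(1+\|X\|))<\infty$ by Proposition \ref{prop12}, and $\E(\|X\|^p\ln(1+\|X\|))<\infty$ alone is \eqref{m11}; (b) conversely, $\E(\|X\|^p\ln(1+\|X\|))<\infty$ implies both $\E(\|X\|^p)<\infty$ and the series in \eqref{m03} (the dyadic estimate), hence $X\in\SLLN(p,p)$ by Theorem \ref{thm14}, and it also implies \eqref{m06}; so by Proposition \ref{prop12} we recover \eqref{m09}.

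For part (ii), let $0<q<1\le p<2$ and assume $\mathbf{B}$ is of stable type $p$. Here condition \eqref{m06} is $\int_0^\infty\P^{q/p}(\|X\|^q>t)\,\mathrm{d}t<\infty$ (since $q<1\le p$, we are in the range $q<p$), and Theorem \ref{thm15} states that $X\in\SLLN(p,q)$ is equivalent to $\E(X)=0$ together with $\int_0^\infty\P^{q/p}(\|X\|^q>t)\,\mathrm{d}t<\infty$. Hence ``$X\in\SLLN(p,q)$ plus \eqref{m06}'' is the same as \eqref{m13}, and Proposition \ref{prop12} then gives \eqref{m09} $\Leftrightarrow$ \eqref{m13}. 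One subtlety to record: Proposition \ref{prop12} as stated does not require stable type $p$, but Theorem \ref{thm15} does, and it is Theorem \ref{thm15} that supplies the implication $X\in\SLLN(p,q)\Rightarrow \E(X)=0$; conversely the ``if'' direction of Theorem \ref{thm15} (which also uses stable type $p$) yields $X\in\SLLN(p,q)$ from \eqref{m13}, so the stable type $p$ hypothesis is genuinely used in both directions.

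The main obstacle I anticipate is the bookkeeping in part (i) for $q=p$: one must be careful that $\min\{u_n^p,n\}$ in the series of \eqref{m03} is handled correctly against $\ln(1+\|X\|)$, and in particular that the direction ``\eqref{m03} $+$ \eqref{m06} $\Rightarrow$ \eqref{m11}'' is trivial only because \eqref{m06} at $q=p$ \emph{is} \eqref{m11}, whereas the nontrivial content is the reverse implication \eqref{m11} $\Rightarrow$ \eqref{m03}, i.e. that $\E(\|X\|^p\ln(1+\|X\|))<\infty$ forces the truncated-moment series to converge. That dyadic comparison, while elementary, is the one spot requiring an actual estimate rather than a citation; everything else is a direct application of Proposition \ref{prop12} and Theorems \ref{thm14}--\ref{thm15}.
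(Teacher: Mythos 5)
Your reduction is the same as the paper's: Proposition \ref{prop12} converts \eqref{m09} into ``$X\in\SLLN(p,q)$ together with \eqref{m06}'', and Theorems \ref{thm14} and \ref{thm15} then settle the cases $0<q<p<1$ and $0<q<1\le p<2$ exactly as in the paper's proof (including your observation that stable type $p$ is used in both directions via Theorem \ref{thm15}).

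The gap is the one estimate you defer in the case $q=p<1$, namely that $\E\left(\|X\|^p\ln(1+\|X\|)\right)<\infty$ forces $\sum_{n\ge1}n^{-1}\E\left(\|X\|^p\mathbf{1}(\min\{u_n^p,n\}<\|X\|^p\le n)\right)<\infty$. Your sketch --- ``split the integral dyadically and use $\min\{u_n^p,n\}\le n$'' --- cannot work as stated: lowering the truncation level only enlarges each truncated expectation, so the bound $\min\{u_n^p,n\}\le n$ gives no control at all; if one discards the lower truncation the series becomes $\sum_n n^{-1}\E\left(\|X\|^p\mathbf{1}(\|X\|^p\le n)\right)$, which diverges for every nondegenerate $X$ no matter how many moments it has. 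The implication genuinely requires the defining property of the quantile, namely $\P(\|X\|^p>t)<1/n$ for $t>u_n^p$: writing the series as an integral of $\P(\|X\|^p>t)$ against the weight $\sum\{n^{-1}: u_n^p<t\le n\}$, this property caps the admissible indices by roughly $1/\P(\|X\|^p>t)$, so the weight is of order $\ln_+\bigl(1/(t\,\P(\|X\|^p>t))\bigr)$, which is then absorbed using $\E\left(\|X\|^p\ln(1+\|X\|)\right)<\infty$. This is precisely the content of Lemma 5.6 of Li, Qi, and Rosalsky \cite{LQR11}, which the paper invokes with $\|X\|^p$ in place of $\|X\|$; you should either cite that lemma or carry out the quantile-based computation, since the purely dyadic comparison you describe does not suffice. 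Apart from this step, your argument coincides with the paper's.
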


By combining Theorems \ref{thm14} and \ref{thm15}, and Corollary \ref{cor01}, we
obtain Theorem \ref{summary}. Characterizations of SLLN in Banach spaces was proved by Hoffmann-J\o rgensen and Pisier \cite{HoffmannPisier}, de Acosta \cite{Acosta81},
and Mikosch and Norvai\v{s}a \cite{MikoschNorvaisa}.
Ledoux and Talagrand \cite{LedouxTalagrand88}, and more recently, Einmahl and Li \cite{EinmahlLi08} discovered characterizations of the law of the iterated logarithm
for Banach-valued random variables. Our Theorems \ref{thm14} and \ref{thm15}, Corollary \ref{cor01}, and the findings by Li, Qi, and Rosalsky \cite{LQR16}, and
Hechner and Heinkel \cite{HechnerHeinkel} 
complete a picture of characterizations of the $(p,q)$-type SLLN in Banach spaces, as well as characterizations of
\[\sum_{n=1}^\infty \dfrac{1}{n}\E\left(\dfrac{\left\|\sum_{k=1}^{n} X_k\right\|}{n^{1/p}}\right)^q<\infty,\ 0<p<2,\ q>0.\]

The rest of the paper is organized as follows. 
In Section \ref{sec:pqSLLN-implies-SLLN}, we prove that the $(p,q)$-type SLLN implies
the Marcinkiewicz--Zygmund SLLN without
assuming that the random variables are identically distributed.
This result allows us
to provide an exact characterization of stable
type $p$ Banach spaces through the $(p,q)$-type SLLN, which we present and prove in Section \ref{sec:stable}.
In Section \ref{sec:main-results}, we will prove Theorems \ref{thm14}, \ref{thm15}, and Corollary \ref{cor01}.
Finally, the paper is concluded
with further remarks in Section \ref{sec:remark}.

\section{The $(p,q)$-type SLLN implies the Marcinkiewicz--Zygmund SLLN}\label{sec:pqSLLN-implies-SLLN}

A sequence of random variables $\left\{X_n, n\ge 1\right\}$ is said to be stochastically dominated by a 
random variable $X$ if
\begin{equation}\label{stoch-dominated}
\sup_{n\ge 1}\mathbb{P}(\|X_n\|>t)\le \mathbb{P}(\|X\|>t),\ t\ge 0.
\end{equation}
It is well known that for a sequence of independent mean zero random variables $\left\{X_n, n \ge 1\right\}$
taking values in a real separable stable type $p$ Banach space $\mathbf{B}$, $1\le p<2$, the condition that $\left\{X_n, n \ge 1\right\}$
are stochastically dominated by a random variable $X$ with $\E(\|X\|^p)<\infty$ implies
the Marcinkiewicz--Zygmund SLLN, i.e.,
\[\lim_{n\to\infty}\dfrac{\sum_{i=1}^n X_i}{n^{1/p}}=0 \text{ a.s.}\]
However, this is no longer true if $\mathbf{B}$ is of Rademacher type $p$ only. To see this,
let $1\le p<2$, and $\ell_p$ denote the real separable Rademacher type $p$ Banach space of absolute $p^{\text{th}}$ power summable real
sequences $v=\{v_i,i\ge 1\}$ with norm 
\[\|v\|=\left(\sum_{i=1}^\infty|v_i|^p\right)^{1/p},\]
and define a sequence 
$\{V_n,n\ge 1\}$  of independent random
variables in $\ell_p$ by requiring the $\{V_n,n\ge 1\}$ to be independent with
\[\P(V_n=-v^{(n)})=\P(V_n=v^{(n)})=\dfrac{1}{2},\ n\ge 1,\]
where for $n\ge 1$, $v^{(n)}$ is the element of $\ell_p$ having $1$ in its $n^{\text{th}}$ position
and $0$ elsewhere.
Then the sequence $\{V_n,n\ge 1\}$ is stochastically dominated by $V_1$
with $\E(\|V_1\|)^p=1$. However, $\{V_n,n\ge 1\}$ does not obey
the Marcinkiewicz--Zygmund SLLN since for all $n\ge 1$,
\[\dfrac{\|\sum_{i=1}^n V_i\|}{n^{1/p}}=1.\]
In this section, we will prove that for $1\le p<2,\ q>0$, and for a sequence of independent mean zero random variables $\left\{X_n, n \ge 1\right\}$ which is
stochastically dominated by a random variable $X$ with $\E(\|X\|^p)<\infty$, the $(p, q)$-type SLLN implies the Marcinkiewicz--Zygmund
SLLN. 
Li, Qi, and Rosalsky \cite[Lemma 3]{LQR15} proved this result for i.i.d. random variables $\left\{X, X_n, n \ge 1\right\}$ by using a generalization
of Ottaviani’s inequality developed by Li and Rosalsky \cite{LiRosalsky13} and the strong stationary property of the sequence
$\left\{X_n, n \ge 1\right\}$ without assuming that $\E(\|X\|^p)<\infty$. 
In our setting, $\left\{X_n, n \ge 1\right\}$ is no longer stationary. The method we present here
is completely different from that of Li, Qi, and Rosalsky \cite[Lemma 3]{LQR15}. We involve a symmetrization argument
and some techniques regarding the notion of complete convergence in mean of order $p$ developed by Rosalsky, Thanh, and Volodin \cite{RTV}. 
The result of this section
will be used to show that the conditions for the sequence $\left\{X_n, n \ge 1\right\}$ satisfying the $(p, q)$-type SLLN
in Theorem \ref{thm15} 
are shown to provide an exact characterization of stable type $p$ Banach spaces.

Firstly, we will need the following two lemmas. The first lemma is a simple modification of Theorems 1 and 2 of Etemadi \cite{Etemadi85}.

\begin{lemma}\label{Etemadi}
	Let $\alpha>0$, and let $\left\{X_n, n \ge 1\right\}$ be
	a sequence of independent random variables. 
	Then
	\begin{equation}\label{Et31}
	\lim_{n\to\infty}\dfrac{\sum_{i=1}^n X_i}{n^{\alpha}}=0 \text{ a.s.}
	\end{equation}
	if and only if
	\begin{equation}\label{Et33} 
	\sum_{n=1}^{\infty} \dfrac{1}{n}\P\left(\left\|\sum_{i=n+1}^{2n} X_i\right\|>n^\alpha \varepsilon \right)<\infty \text{ for all }\varepsilon>0
	\end{equation}
	and
	\begin{equation}\label{Et35}
	\lim_{n\to\infty}\dfrac{\sum_{i=1}^n X_i}{n^{\alpha}}=0 \text{ in probability.}
	\end{equation}
	If we assume further that $\left\{X_n, n \ge 1\right\}$ are symmetric random variables, then \eqref{Et31} and \eqref{Et33} are equivalent.
\end{lemma}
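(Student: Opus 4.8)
The plan is to obtain Lemma~\ref{Etemadi} by adapting the proofs of Theorems~1 and~2 of Etemadi \cite{Etemadi85}; the changes are cosmetic. First, the normalising sequence $n$ is replaced by $n^{\alpha}$, which is harmless because $\alpha>0$: the only geometric sums that occur (chiefly $\sum_{j=0}^{k-1}2^{j\alpha}\asymp 2^{k\alpha}$) behave as in the case $\alpha=1$. Second, $\mathbb{R}$-valued random variables are replaced by $\mathbf{B}$-valued ones, which is harmless because the only probabilistic tools used — the maximal inequalities of L\'evy and Ottaviani, the two Borel--Cantelli lemmas, and the lemmas of Kronecker and Toeplitz — are available verbatim in a separable Banach space. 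The organising observation throughout is that the dyadic increments $T_{j}:=S_{2^{j+1}}-S_{2^{j}}$, $j\ge 0$, are \emph{independent}, so along the subsequence $\{2^{j}\}$ one has the full strength of both Borel--Cantelli lemmas; a generic index $n$ with $2^{k}\le n<2^{k+1}$ is then handled by noting that $\sum_{i=n+1}^{2n}X_{i}=(S_{2n}-S_{2^{k}})-(S_{n}-S_{2^{k}})$ has norm at most $2\max_{2^{k}\le m\le 2^{k+2}}\|S_{m}-S_{2^{k}}\|$, and that the increments $T_{k},T_{k+1}$ between them exhaust the indices in $(2^{k},2^{k+2}]$.

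Necessity of \eqref{Et33} and \eqref{Et35} under \eqref{Et31}: the convergence \eqref{Et35} is immediate, and $T_{j}/2^{j\alpha}=2^{\alpha}S_{2^{j+1}}/2^{(j+1)\alpha}-S_{2^{j}}/2^{j\alpha}\to 0$ a.s., so the converse Borel--Cantelli lemma (using independence of the $T_{j}$) gives $\sum_{j}\mathbb{P}(\|T_{j}\|>2^{j\alpha}\delta)<\infty$ for every $\delta>0$. For $2^{k}\le n<2^{k+1}$ the maximum $\max_{2^{k}\le m\le 2^{k+2}}\|S_{m}-S_{2^{k}}\|$ of partial sums of the independent block $X_{2^{k}+1},X_{2^{k}+2},\cdots$ is dominated — via L\'evy's inequality when the $X_{n}$ are symmetric, and via Ottaviani's inequality together with the already-noted \eqref{Et35} (which makes the relevant tail probabilities eventually $<1$) in general — by a constant multiple of $\mathbb{P}(\|T_{k}\|>c\,2^{k\alpha}\delta)+\mathbb{P}(\|T_{k+1}\|>c\,2^{k\alpha}\delta)$; since $\sum_{n=2^{k}}^{2^{k+1}-1}1/n$ is bounded, summing over $k$ and invoking the displayed convergence yields \eqref{Et33}. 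In the symmetric case \eqref{Et35} played no role here, consistent with the final assertion of the lemma.

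Sufficiency — deriving \eqref{Et31} from \eqref{Et33} and \eqref{Et35}, and from \eqref{Et33} alone when the $X_{n}$ are symmetric — I would run Etemadi's blocking argument. By Tonelli, \eqref{Et33} gives $\sum_{n}\frac1n\mathbf{1}(\|\sum_{i=n+1}^{2n}X_{i}\|>n^{\alpha}\varepsilon)<\infty$ a.s.\ for each rational $\varepsilon$, hence (Kronecker) $\frac1N\sum_{n\le N}\mathbf{1}(\cdots)\to 0$ a.s., so almost surely, for every $\varepsilon>0$ and all large $k$, all but a vanishing fraction of the indices $n\in[2^{k},2^{k+1})$ satisfy $\|\sum_{i=n+1}^{2n}X_{i}\|\le n^{\alpha}\varepsilon$. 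Feeding this, \eqref{Et35}, and the maximal inequalities into the blocking scheme, and using that $T_{2^{k}}$ shares all but a bounded number of its summands with $T_{m}$ for $m$ in a window just below $2^{k}$ — the common part being independent of the finitely many extra and missing summands — one averages the tail of $\|T_{2^{k}}\|$ against the tails $\mathbb{P}(\|T_{m}\|>m^{\alpha}\varepsilon)$, whose $1/m$-weighted sum over that window is finite by \eqref{Et33}; this forces $T_{j}/2^{j\alpha}\to 0$ a.s. Since $S_{2^{k}}-X_{1}=\sum_{j=0}^{k-1}T_{j}$, a Toeplitz/Kronecker argument (here is where $\alpha>0$ enters, through $\sum_{j<k}2^{j\alpha}\asymp 2^{k\alpha}$) then gives $S_{2^{k}}/2^{k\alpha}\to 0$ a.s.; the converse Borel--Cantelli lemma now yields $\sum_{k}\mathbb{P}(\|T_{k}\|>2^{k\alpha}\varepsilon)<\infty$, whence $\max_{2^{k}\le n<2^{k+1}}\|S_{n}-S_{2^{k}}\|/2^{k\alpha}\to 0$ a.s.\ by L\'evy's (or Ottaviani's) inequality and Borel--Cantelli, and \eqref{Et31} follows. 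I expect the main obstacle to be exactly this passage from \eqref{Et33}, a $1/n$-weighted sum over \emph{all} $n$, to the summability of $\mathbb{P}(\|T_{k}\|>2^{k\alpha}\varepsilon)$ along the dyadic subsequence: \eqref{Et33} is strictly weaker than a Borel--Cantelli-summable bound on that subsequence, so \eqref{Et35} (or, in the symmetric case, the two-sided control supplied by L\'evy's inequality) is indispensable, and the error terms produced by the averaging must be arranged so that no constant accumulates over the infinitely many blocks.
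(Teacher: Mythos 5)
Your necessity direction is sound: from \eqref{Et31} the dyadic increments $T_k=S_{2^{k+1}}-S_{2^k}$ are independent and satisfy $T_k/2^{k\alpha}\to 0$ a.s., so the converse Borel--Cantelli lemma gives $\sum_k\P(\|T_k\|>2^{k\alpha}\delta)<\infty$, and spreading this over each dyadic block by L\'evy's inequality (resp. Ottaviani's inequality together with \eqref{Et35}) yields \eqref{Et33}; this is in line with Etemadi's argument, which the paper simply cites (its ``proof'' of the lemma is only that citation). The genuine problem is in your sufficiency direction, at exactly the step you yourself flag as the crux: passing from the $1/n$-weighted series \eqref{Et33} to $\sum_k\P(\|T_k\|>2^{k\alpha}\varepsilon)<\infty$. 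The mechanism you describe --- that $T_k$ ``shares all but a bounded number of its summands'' with the doubling increments $S_{2m}-S_m$ for $m$ in a window just below $2^k$, the common part being independent of the finitely many extra and missing summands --- does not work. If the window has length proportional to $2^k$ (which is what is needed for the $1/m$-weighted sum over the window to be a small tail of \eqref{Et33}), the symmetric difference of the index sets $(m,2m]$ and $(2^k,2^{k+1}]$ has cardinality of order $2^k-m$, not $O(1)$. If instead the window has bounded length, the ``extra and missing'' pieces are sums of finitely many individual $X_i$'s whose tails $\P(\|X_i\|>2^{k\alpha}\varepsilon)$ are controlled by no hypothesis: neither \eqref{Et33} nor \eqref{Et35} gives summable bounds on single summands (indeed $\sum_i\P(\|X_i\|>i^{\alpha}\varepsilon)<\infty$ is essentially part of what one is trying to prove), so no summable bound on $\P(\|T_k\|>2^{k\alpha}\varepsilon)$ comes out. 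The pathwise Tonelli/Kronecker density observation you begin with has the same defect: knowing that most $n$ in a block are good for a fixed $\omega$ does not combine with maximal inequalities pathwise.

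The repair, which is the actual content of the cited argument, works at the level of probabilities and uses a maximal inequality for the comparison instead of adding and deleting summands: for every $n\in[2^{k-1},2^k]$ one has $(2^k,2^{k+1}]\subset(n,4n]$, hence $\|T_k\|\le 2\max_{n<j\le 4n}\|S_j-S_n\|$, and L\'evy's inequality (symmetric case), or Ottaviani's inequality with the denominators controlled via \eqref{Et35} (general case), gives, for all large $k$ and uniformly in such $n$, $\P\left(\|T_k\|>C2^{k\alpha}\varepsilon\right)\le C'\big(\P(\|S_{2n}-S_n\|>n^{\alpha}\varepsilon)+\P(\|S_{4n}-S_{2n}\|>(2n)^{\alpha}\varepsilon)\big)$. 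Averaging this bound over $n\in[2^{k-1},2^k)$ with weights $1/n$ and summing over $k$, \eqref{Et33} yields $\sum_k\P(\|T_k\|>C2^{k\alpha}\varepsilon)<\infty$; Borel--Cantelli and Kronecker/Toeplitz summation (this is where $\alpha>0$ enters) give $S_{2^k}/2^{k\alpha}\to 0$ a.s., and one further application of L\'evy/Ottaviani within blocks controls $\max_{2^k\le n<2^{k+1}}\|S_n-S_{2^k}\|$ and gives \eqref{Et31}. With this step replaced, the rest of your outline (including the observation that \eqref{Et35} is not needed in the symmetric case) is correct.
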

\begin{proof}
	The proof of the first part is the same as that of Theorem 2 of Etemadi \cite{Etemadi85}. 
	The proof of the last part is the same as that of Theorem 1 of Etemadi \cite{Etemadi85}. 
\end{proof}

The next lemma shows that for independent (not necessary identically distributed) 
random variables $\{X,X_n,n\ge1\}$, \eqref{m09} implies a SLLN. When $\alpha=1$ and $1\le q\le 2$,
Lemma \ref{RTV} is Theorem 3 of Rosalsky, Thanh, and Volodin \cite{RTV}. The double sum version
of Theorem 3 of Rosalsky, Thanh, and Volodin \cite{RTV} was proved in \cite{ThanhThuy}.
\begin{lemma}\label{RTV}
	Let $\alpha>0$, $q\ge 1$, and let $\{X_n.n\ge 1\}$ be a sequence of independent mean zero 
	random variables. 
	If 
	\begin{equation}\label{RTV31}
\sum_{n=1}^\infty \dfrac{1}{n}\E\left(\dfrac{\left\|\sum_{k=1}^{n} X_k\right\|}{n^{\alpha}}\right)^q<\infty,
	\end{equation}
	then
	\begin{equation}\label{RTV33}
	\dfrac{\sum_{i=1}^{n}X_i}{n^{\alpha}}\overset{{\mathcal{L}}_q}{\longrightarrow} 0,\ \text{ and }\ \dfrac{\sum_{i=1}^{n}X_i}{n^{\alpha}}\overset{\text{ a.s. }}{\longrightarrow} 0.
	\end{equation}
\end{lemma}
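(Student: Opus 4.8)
The plan is to reduce the a.s.\ convergence in \eqref{RTV33} to the criterion of Lemma~\ref{Etemadi} and to extract the $\mathcal{L}_q$ convergence as a by-product of the argument. Throughout put $S_n=\sum_{i=1}^n X_i$ and $b_n=\E\left(\|S_n\|^q\right)$, so that hypothesis \eqref{RTV31} reads $\sum_{n=1}^\infty b_n n^{-(1+q\alpha)}<\infty$; in particular $b_n<\infty$ for every $n$. The first thing I would record is that the mean-zero assumption forces $\{b_n\}$ to be nondecreasing: since $\E(S_{n+1}\mid\mathcal{F}_n)=S_n$, the conditional Jensen inequality for the convex map $x\mapsto\|x\|^q$ (here $q\ge1$ is used) gives $\|S_n\|^q\le\E(\|S_{n+1}\|^q\mid\mathcal{F}_n)$, hence $b_n\le b_{n+1}$.

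Next I would deduce that $\E\left(\|S_n/n^\alpha\|^q\right)=b_n/n^{q\alpha}\to0$ by a ``doubling block'' estimate. For $m\le n\le 2m$, monotonicity gives $b_n/n^{q\alpha}\ge b_m/(2m)^{q\alpha}=2^{-q\alpha}\,b_m/m^{q\alpha}$, and therefore
\[
\sum_{n=m}^{2m}\frac{b_n}{n^{1+q\alpha}}\;\ge\;2^{-q\alpha}\,\frac{b_m}{m^{q\alpha}}\sum_{n=m}^{2m}\frac1n\;\ge\;\frac{\ln 2}{2^{q\alpha}}\cdot\frac{b_m}{m^{q\alpha}}.
\]
Since the series in \eqref{RTV31} converges, its tails $\sum_{n\ge m}b_n n^{-(1+q\alpha)}$ tend to $0$, so the displayed inequality forces $b_m/m^{q\alpha}\to0$. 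This is exactly $S_n/n^\alpha\to0$ in $\mathcal{L}_q$, which in turn gives $S_n/n^\alpha\to0$ in probability, i.e.\ condition \eqref{Et35}.

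It remains to verify \eqref{Et33}. By Markov's inequality, $\P\left(\|S_{2n}-S_n\|>n^\alpha\varepsilon\right)\le\varepsilon^{-q}n^{-q\alpha}\,\E\left(\|S_{2n}-S_n\|^q\right)$. Because $S_{2n}-S_n=\sum_{i=n+1}^{2n}X_i$ is independent of $S_n$ and $\E(S_n)=0$, one has $\E\left(S_{2n}\mid\sigma(X_{n+1},\dots,X_{2n})\right)=S_{2n}-S_n$, and a second application of conditional Jensen yields $\E\left(\|S_{2n}-S_n\|^q\right)\le\E\left(\|S_{2n}\|^q\right)=b_{2n}$. Consequently
\[
\sum_{n=1}^\infty\frac1n\,\P\left(\left\|\sum_{i=n+1}^{2n}X_i\right\|>n^\alpha\varepsilon\right)\;\le\;\frac{1}{\varepsilon^q}\sum_{n=1}^\infty\frac{b_{2n}}{n^{1+q\alpha}}\;=\;\frac{2^{1+q\alpha}}{\varepsilon^q}\sum_{n=1}^\infty\frac{b_{2n}}{(2n)^{1+q\alpha}}\;\le\;\frac{2^{1+q\alpha}}{\varepsilon^q}\sum_{m=1}^\infty\frac{b_m}{m^{1+q\alpha}}<\infty,
\]
so \eqref{Et33} holds for every $\varepsilon>0$. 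Lemma~\ref{Etemadi} then delivers \eqref{Et31} for this $\alpha$, completing the proof of \eqref{RTV33}. The only delicate points are that both the monotonicity of $\{b_n\}$ and the inequality $\E\|S_{2n}-S_n\|^q\le b_{2n}$ use the mean-zero hypothesis through the conditioning argument; beyond this the proof is just the elementary doubling estimate together with Etemadi's criterion, and I do not anticipate a genuine obstacle apart from keeping track that all quantities involved are finite, which is guaranteed by \eqref{RTV31}.
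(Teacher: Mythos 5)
Your proof is correct and follows essentially the same route as the paper's: monotonicity of $\E\left(\|S_n\|^q\right)$ plus a tail-of-series estimate gives the $\mathcal{L}_q$ convergence, Markov's inequality yields condition \eqref{Et33}, and the first part of Lemma \ref{Etemadi} then gives the a.s.\ convergence. The only cosmetic differences are that you prove the monotonicity via conditional Jensen rather than citing it, and you bound $\E\left(\|S_{2n}-S_n\|^q\right)$ by $\E\left(\|S_{2n}\|^q\right)$ through a second Jensen argument where the paper simply uses the triangle inequality with a factor $(2/\varepsilon)^q$.
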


\begin{proof}
	For $n\ge 1$, set
	\[S_n =\sum_{i=1}^n X_i.\]
	Then $\{\E\|S_n\|^q,n\ge1 \}$ is a non-decreasing sequence (see, e.g., \cite[Lemma 2]{RTV}). 
	Therefore,
	by applying \eqref{RTV31}, we have
	\begin{equation}\label{RTV35}
	\begin{split}
	\E\left(\left\| \dfrac{S_n}{n^\alpha}\right\|^q\right) & \le \alpha q \sum_{m=n}^\infty\dfrac{1}{m^{1+\alpha q}}\E\left(\|S_n\|^q\right)\\
	& \le \alpha q \sum_{m=n}^\infty\dfrac{1}{m^{1+\alpha q}}\E\left(\|S_m\|^q\right)\to 0 \text{ as } n\to\infty
	\end{split}
	\end{equation}
	thereby proving the first half of \eqref{RTV33}.
	Moreover, it follows from \eqref{RTV31} and Markov's inequality that for
	arbitrary $\varepsilon>0$
	\begin{equation}\label{RTV36}
	\begin{split}
	&\sum_{n=1}^{\infty} \dfrac{1}{n}\P\left(\left\|\sum_{i=n+1}^{2n} X_i\right\|>n^\alpha \varepsilon \right)\\
	& \le  \left(\dfrac{2}{\varepsilon}\right)^q \left(\sum_{n=1}^{\infty} \dfrac{1}{n} \E \left(\left\|\dfrac{S_{2n}}{n^\alpha} \right\|^q\right)+
	\sum_{n=1}^{\infty} \dfrac{1}{n} \E \left(\left\|\dfrac{S_{n}}{n^\alpha} \right\|^q\right)\right)<\infty.
	\end{split}
	\end{equation}
	The second hafl of \eqref{RTV33} then follows from the first part of Lemma \ref{Etemadi}, \eqref{RTV36}, and the first part of \eqref{RTV33}.
\end{proof}

The main result of this Section is the following proposition.

\begin{proposition}
\label{pqSLLN-implies-SLLN} 
	Let $1 \le p < 2$ and $q > 0$, and let $\left\{X_n, n \ge 1\right\}$ be
	a sequence of independent mean zero random variables which is stochastically dominated by a random variable $X$ with $\mathbb{E}(\|X\|^p) < \infty$.
	We assume further that the random variables $\left\{X_n, n \ge 1\right\}$ are symmetric when $0<q<1$. If
	\begin{equation}\label{pr31}
	\sum_{n=1}^{\infty}\dfrac{1}{n}\left(\dfrac{\|S_n\|}{n^{1/p}}\right)^q<\infty  \text{ a.s.,}
	\end{equation}
	then
	\begin{equation}\label{pr32}
	\lim_{n\to\infty}\dfrac{S_n}{n^{1/p}}=0 \text{ a.s.}
	\end{equation}
\end{proposition}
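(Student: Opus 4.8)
The plan is to verify, for the sequence $\{X_n\}$ (or a suitable modification of it), the hypotheses of Etemadi's criterion, Lemma~\ref{Etemadi}, and thereby deduce \eqref{pr32}; when $q\ge 1$ an alternative is to promote \eqref{pr31} to its mean counterpart \eqref{RTV31} and invoke Lemma~\ref{RTV} directly. I would proceed in three stages: a symmetrization/truncation reduction; a maximal-inequality estimate that converts \eqref{pr31} into control of the block probabilities $\P(\|S_{2n}-S_n\|>n^{1/p}\varepsilon)$; and the assembly of these into \eqref{Et33}.

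\emph{Stage 1 (reduction).} For $0<q<1$ the $X_n$ are already symmetric. For $q\ge 1$ I would symmetrize: let $\{X_n'\}$ be an independent copy and put $\widehat X_n=X_n-X_n'$. Independence, the mean-zero property, stochastic domination by a random variable with finite $p$-th moment, and \eqref{pr31} (via $(a+b)^q\le 2^{q-1}(a^q+b^q)$) all pass to $\{\widehat X_n\}$, and \eqref{pr32} for the symmetrized sums returns \eqref{pr32} for $S_n$ through the weak symmetrization inequality, once one checks that the medians of the block sums $S_{2n}-S_n$ are $o(n^{1/p})$; this comes from the same circle of estimates as Stage~2. (For $q\ge1$ one may instead bypass symmetrization altogether by aiming directly at \eqref{RTV31} and quoting Lemma~\ref{RTV}.) Next I truncate at the Marcinkiewicz--Zygmund level: with $Y_n=X_n\mathbf{1}(\|X_n\|\le n^{1/p})$ one has $\sum_n\P(\|X_n\|>n^{1/p})\le\sum_n\P(\|X\|>n^{1/p})<\infty$ because $\E(\|X\|^p)<\infty$, so by the Borel--Cantelli lemma $X_n=Y_n$ eventually a.s.; hence $(S_n-\sum_{i=1}^n Y_i)/n^{1/p}\to 0$ a.s. and \eqref{pr31} persists with $\sum_{i=1}^n Y_i$ in place of $S_n$. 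Norm truncation preserves symmetry, so no recentering is needed in the symmetric case; in the mean-zero case the recentering constants satisfy $\sum_{i=1}^n\|\E Y_i\|=o(n^{1/p})$, since $\|\E Y_i\|=\|\E(X_i\mathbf{1}(\|X_i\|>i^{1/p}))\|\le\E(\|X\|\mathbf{1}(\|X\|>i^{1/p}))$ and $t^{p-1}\E(\|X\|\mathbf{1}(\|X\|>t))\le\E(\|X\|^p\mathbf{1}(\|X\|>t))\to 0$. Thus we may assume $\|X_n\|\le n^{1/p}$ for all $n$; a companion Borel--Cantelli argument over dyadic blocks (using $\sum_k 2^k\P(\|X\|>2^{k/p}\varepsilon)<\infty$) records the auxiliary fact $\max_{i\le n}\|X_i\|/n^{1/p}\to 0$ a.s.

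\emph{Stages 2--3 (estimate and assembly).} With $\|X_n\|\le n^{1/p}$ and the $X_n$ symmetric, I would combine L\'evy's inequality (to replace $\|S_{2n}-S_n\|$ by the in-block maximum $\max_{n\le m\le 2n}\|S_m-S_n\|\le 2\max_{n\le m\le 2n}\|S_m\|$) with the Hoffmann--J\o rgensen inequality and the auxiliary fact above (so that the ``largest summand'' term is negligible) to bound the $q$-th moments, resp. the tails, of the block sums by quantities governed by the quantiles of $S_m/m^{1/p}$; these quantiles are in turn tied to \eqref{pr31} through the complete-convergence-in-mean estimates of Rosalsky, Thanh, and Volodin~\cite{RTV}, which is also the step furnishing \eqref{RTV31} when $q\ge1$. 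Summing over $n$ — using that each $m$ lies in only $O(m)$ of the windows $[n,2n]$, and that \eqref{pr31} asserts precisely that the weighted block averages $2^{-k}\sum_{2^k\le m<2^{k+1}}(\|S_m\|/m^{1/p})^q$ tend to $0$ a.s. — produces \eqref{Et33}. Together with convergence in probability of $S_n/n^{1/p}$ to $0$ (automatic in the symmetric case by Lemma~\ref{Etemadi}, and supplied by the $\mathcal{L}_q$ half of Lemma~\ref{RTV} when $q\ge1$), Lemma~\ref{Etemadi} then yields \eqref{pr32}.

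The main obstacle is Stage~2: the hypothesis \eqref{pr31} gives only almost-sure finiteness of $\sum_n n^{-1}(\|S_n\|/n^{1/p})^q$, whereas Lemma~\ref{Etemadi} and Lemma~\ref{RTV} demand convergence of the corresponding series of probabilities, respectively expectations, and almost-sure summability of a nonnegative series does not in general imply summability in expectation. Making the conversion go through forces one to use the full structure at hand — independence, the post-truncation boundedness of the summands, symmetry, and the fact that once $\|S_m\|>\varepsilon m^{1/p}$ it remains so over a long run of consecutive indices because $\max_{i\le 2n}\|X_i\|=o(n^{1/p})$ — in combination with the complete-convergence-in-mean machinery of \cite{RTV}; this quantitative passage is the technical heart of the proof.
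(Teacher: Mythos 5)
Your scaffolding---truncation at level $n^{1/p}$ plus Borel--Cantelli, Etemadi's criterion (Lemma \ref{Etemadi}), symmetrization when $q\ge 1$, and Lemma \ref{RTV} after recentering---coincides with the paper's. But the decisive step is exactly the one you flag and then leave unresolved: passing from the almost-sure hypothesis \eqref{pr31} to the corresponding mean statement for the truncated sums (that is, \eqref{pr36b}, resp.\ \eqref{pr316} for the symmetrized truncated sums, which is what feeds Markov's inequality to get \eqref{Et33}, and what plays the role of \eqref{RTV31} when $q\ge1$). Your Stage~2 proposes to obtain this from L\'evy plus Hoffmann--J\o rgensen plus ``the complete-convergence-in-mean estimates of Rosalsky--Thanh--Volodin,'' but the RTV machinery (Lemma \ref{RTV}) runs in the opposite direction---from the summability in mean to a.s.\ convergence---so it cannot furnish the mean series; and the standard Hoffmann--J\o rgensen inequality controls moments of a single sum by its tails and the maximal summand, not the expectation of the random series $\sum_n n^{-1-q/p}\|S_n^{(1)}\|^q$ by its a.s.\ finiteness. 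As you yourself note, a.s.\ summability of a nonnegative series does not imply summability in mean, so without an explicit inequality here the argument does not close; the same gap defeats your alternative of ``aiming directly at \eqref{RTV31}.''

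The paper closes this gap with a specific tool you neither cite nor replace: inequality (11) in Theorem 7 of Li, Qi, and Rosalsky \cite{LQR15}, a Hoffmann--J\o rgensen-type result for weighted series of partial sums of independent symmetric summands, which converts the a.s.\ statement \eqref{pr35} into the mean statement \eqref{pr36b} provided $\E\left(\sup_n b_n\|Y_n\|^q\right)<\infty$ with $b_n=\sum_{k\ge n}k^{-1-q/p}$---and this hypothesis is exactly what the truncation at $n^{1/p}$ buys, since then $b_n\|Y_n\|^q$ is bounded by a constant (see \eqref{pr36a}). For $q\ge1$ the paper applies this to the symmetrized truncated sums, desymmetrizes via Lemma 4 of \cite{LQR15} (obtaining \eqref{pr317} from \eqref{pr316} under \eqref{sym31}) rather than via medians of block sums, and only then invokes Lemma \ref{RTV} for the centered sums, removing the centering by the elementary estimate \eqref{pr310}; note also that for $0<q<1$ no convergence-in-probability input is needed, because the symmetric half of Lemma \ref{Etemadi} makes \eqref{Et33} alone sufficient. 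In short: right skeleton, but the a.s.-to-mean conversion that you correctly identify as the technical heart is missing, and the machinery you point to for it is not the right one.
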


\begin{proof} Set
	\[Y_n=X_n\mathbf{1}(\|X_n\|^p\le n),\ S_n^{(1)}=\sum_{i=1}^{n}Y_i,\ n\ge 1.\]
	Since $\mathbb{E}(\|X\|^p)<\infty$ and the sequence $\{X_n,n\ge1\}$ is stochastically dominated by $X$,
	\begin{equation}\label{pr33}
	\sum_{n=1}^{\infty}\mathbb{P}(\|X_n\|^p> n)\le \sum_{n=1}^{\infty}\mathbb{P}(\|X\|^p> n)<\infty.
	\end{equation}
	By the Borel--Cantelli lemma, it follows from \eqref{pr33} that
	\begin{equation}\label{pr34}
	\mathbb{P}(\|X_n\|^p> n \text{ i.o. $(n)$})=0.
	\end{equation}
	Combining \eqref{pr31} and \eqref{pr34}, we have
	\begin{equation}\label{pr35}
	\sum_{n=1}^{\infty}\dfrac{1}{n}\left(\dfrac{\|S_n^{(1)}\|}{n^{1/p}}\right)^q<\infty  \text{ a.s.}
	\end{equation}
	To prove \eqref{pr32}, recalling \eqref{pr34}, it suffices to show that
	\begin{equation}\label{pr36}
	\lim_{n\to\infty}\dfrac{S_n^{(1)}}{n^{1/p}}=0 \text{ a.s.}
	\end{equation}
	For $n \ge 1,$ set
	\[a_n=\dfrac{1}{n^{1+q/p}},\ b_n=\sum_{k=n}^{\infty}a_k.\]
Then
	\begin{equation}\label{pr36a}
	\mathbb{E}\left(\sup_{n\ge 1}b_n\|Y_n\|^q\right)\le\mathbb{E}\left(\sup_{n\ge 1}\left(1+\dfrac{p}{q}\right)\dfrac{\|Y_n\|^q}{n^{q/p}}\right)\le 1+\dfrac{q}{p}.
	\end{equation}
	
	Firstly, we consider the case where $0<q<1$.
	Since $\{X_n,n\ge 1\}$ are symmetric random variables, $\{Y_n,n\ge 1\}$ are also symmetric. By applying inequality (11) in Theorem 7 of Li, Qi, and Rosalsky \cite{LQR15}, we conclude from \eqref{pr35} and \eqref{pr36a} that
	\begin{equation}\label{pr36b}
	\sum_{n=1}^{\infty}\dfrac{1}{n}\mathbb{E}\left(\dfrac{\|S_n^{(1)}\|}{n^{1/p}}\right)^q<\infty.
	\end{equation}
	It follows from \eqref{pr36b} and Markov's inequality that for
	arbitrary $\varepsilon>0$
	\begin{equation}\label{Etemadi31}
	\begin{split}
	&\sum_{n=1}^{\infty} \dfrac{1}{n}\P\left(\left\|\sum_{i=n+1}^{2n} Y_i\right\|>n^{1/p} \varepsilon \right)\\
	& \le  \left(\dfrac{2}{\varepsilon}\right)^q \left(\sum_{n=1}^{\infty} \dfrac{1}{n} \E \left(\left\|\dfrac{S_{2n}^{(1)}}{n^{1/p}} \right\|^q\right)+
	\sum_{n=1}^{\infty} \dfrac{1}{n} \E \left(\left\|\dfrac{S_{n}^{(1)}}{n^{1/p}} \right\|^q\right)\right)<\infty.
	\end{split}
	\end{equation}
	The conclusion \eqref{pr36} then follows from the last part of Lemma \ref{Etemadi}.

	Next, we consider the case where $q\ge 1$. Let $\left\{X^{'}, X_n^{'}, n\ge 1\right\}$ be an independent copy of $\left\{X, X_n, n \ge 1\right\}.$ For $n \ge 1,$ set
	\[V_{n}=Y_n-X_n^{'}\mathbf{1}(\|X_n^{'}\|^p\le n),\]
	and
	\[ \hat{S}_n^{(1)}=\sum_{i=1}^{n}V_i.\]
	By \eqref{pr35}, we have
	\begin{equation}\label{sym31}
	\sum_{n=1}^{\infty}\dfrac{1}{n}\left(\dfrac{\|\hat{S}_n^{(1)}\|}{n^{1/p}}\right)^q<\infty \text{ a.s.}
	\end{equation}
	Similar to the proof of \eqref{pr36b}, \eqref{sym31} leads to
	\begin{equation}\label{pr316}
	\sum_{n=1}^{\infty}\dfrac{1}{n}\mathbb{E}\left(\dfrac{\|\hat{S}_n^{(1)}\|}{n^{1/p}}\right)^q<\infty.
	\end{equation}
	By Lemma 4 of  Li, Qi, and Rosalsky \cite{LQR15}, under \eqref{sym31}, \eqref{pr316} is equivalent to
	\begin{equation}\label{pr317}
	\sum_{n=1}^{\infty}\dfrac{1}{n}\mathbb{E}\left(\dfrac{\|S_n^{(1)}\|}{n^{1/p}}\right)^q<\infty.
	\end{equation}
	This implies
	\begin{equation}\label{pr38}
\sum_{n=1}^{\infty}\dfrac{1}{n}\mathbb{E}\left(\dfrac{\|\sum_{i=1}^{n}(Y_i-\mathbb{E}(Y_i))\|}{n^{1/p}}\right)^q\le 2^{q-1}\sum_{n=1}^{\infty}\dfrac{1}{n}\mathbb{E}\left(\dfrac{\|S_n^{(1)}\|}{n^{1/p}}\right)^q<\infty .
	\end{equation}
	By Lemma \ref{RTV}, we have from \eqref{pr38} that
	\begin{equation}\label{pr39}
	\dfrac{\sum_{i=1}^{n}(Y_i-\mathbb{E}(Y_i))}{n^{1/p}}\rightarrow 0  \text{ a.s.}
	\end{equation}
	Since $\E(X_n)=0$ and $\left\{X_n, n \ge 1\right\}$ is stochastically dominated by $X$ with $\mathbb{E}(\|X\|^p) < \infty$, it is routine to prove that
	\begin{equation}\label{pr310}
	\lim_{n\to\infty}\left\|\dfrac{\sum_{i=1}^{n}\mathbb{E}(Y_i)}{n^{1/p}}\right\|\le \lim_{n\to\infty}\dfrac{\sum_{i=1}^{n}\mathbb{E}\left(\|X\|\mathbf{1}(\|X\|^p>i)\right)}{n^{1/p}}=0.
	\end{equation}
	Combining \eqref{pr39} and \eqref{pr310}, we obtain \eqref{pr36}.
\end{proof}

\section{ Characterizations of stable type $p$ Banach spaces}\label{sec:stable}
This section shows that for the sufficiency part of Theorem \ref{thm15}, we can relax the identically distributed condition
of the random variables $\left\{X_n, n \ge 1\right\}$. Furthermore, the conditions for the sequence  $\left\{X_n, n \ge 1\right\}$ satisfying the $(p, q)$-type SLLN are shown to provide an exact characterization of stable type $p$ Banach spaces.

\begin{theorem}\label{thm:stable11} Let $0<q<1\le p<2$
	and let $\mathbf{B}$ be a separable Banach space. Then the following statements are equivalent.
	\begin{itemize}
		\item[(i)] $\mathbf{B}$ is of stable type $p$.
		\item[(ii)] For every sequence $\left\{X_n, n\ge 1\right\}$ of independent mean zero $\mathbf{B}$-valued random variables which is stochastically dominated
		by a random variable $X$, the condition  
		\begin{equation}\label{thm311}
		\int_{0}^{\infty}\mathbb{P}^{q/p}(\|X\|^q>t)\mathrm{d} t<\infty
		\end{equation}
		implies
		\begin{equation}\label{thm312}
		\sum_{n=1}^{\infty}\dfrac{1}{n}\left(\dfrac{\|S_n\|}{n^{1/p}}\right)^q<\infty  \text{ a.s.}
		\end{equation}
	\end{itemize}
\end{theorem}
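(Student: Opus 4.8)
The plan is to prove the two implications separately. Implication (i)$\Rightarrow$(ii) is essentially the sufficiency half of Theorem~\ref{thm15} with stochastic domination in place of equidistribution, so suppose $\mathbf{B}$ is of stable type $p$ and $\{X_n,n\ge1\}$ is independent, mean zero, and stochastically dominated by $X$ with \eqref{thm311}; since $q<p$, \eqref{thm311} forces $\mathbb{E}(\|X\|^p)<\infty$. First I would truncate, setting $Y_n=X_n\mathbf{1}(\|X_n\|^p\le n)$ and $S_n^{(1)}=\sum_{i\le n}Y_i$; since $\sum_n\mathbb{P}(\|X_n\|^p>n)\le\sum_n\mathbb{P}(\|X\|^p>n)<\infty$, the Borel--Cantelli lemma reduces \eqref{thm312} to $\sum_nn^{-1}(n^{-1/p}\|S_n^{(1)}\|)^q<\infty$ a.s., which by Tonelli's theorem follows once one shows $\sum_nn^{-1}\mathbb{E}(n^{-1/p}\|S_n^{(1)}\|)^q<\infty$. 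For this I would symmetrize (passing to $V_i=Y_i-Y_i'$), control the centering contribution $n^{-1/p}\|\sum_{i\le n}\mathbb{E}(Y_i)\|$ via $\mathbb{E}(X)=0$ and stochastic domination exactly as in the proof of \eqref{pr310}, and bound the symmetric truncated sums through a stable type $p$ moment inequality (de Acosta~\cite{Acosta81}, or the complete-convergence-in-mean estimates of \cite{RTV}) together with the symmetrization lemmas of \cite{LQR15}; stochastic domination turns every tail and moment estimate into the corresponding one for $X$, and the summation over $n$ converges precisely because of \eqref{thm311}. Since this merely reproduces the proof of Theorem~\ref{thm15}, I would just verify that no step uses more than stochastic domination.

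For (ii)$\Rightarrow$(i), I would use Proposition~\ref{pqSLLN-implies-SLLN} as a bridge. Let $(x_n)$ be an arbitrary bounded sequence in $\mathbf{B}$, say $\|x_n\|\le M$, let $\{\varepsilon_n,n\ge1\}$ be a Rademacher sequence independent of everything, and set $X_n=\varepsilon_nx_n$. Then $\{X_n,n\ge1\}$ is independent, symmetric, mean zero, and stochastically dominated by the constant $M$, which satisfies \eqref{thm311} trivially. By (ii), \eqref{thm312} holds for $\{X_n,n\ge1\}$; since $1\le p<2$, $0<q<1$, the $X_n$ are symmetric, and $\mathbb{E}(M^p)<\infty$, Proposition~\ref{pqSLLN-implies-SLLN} yields $n^{-1/p}\sum_{i\le n}\varepsilon_ix_i\to0$ a.s. Thus (ii) forces $n^{-1/p}\sum_{i\le n}\varepsilon_ix_i\to0$ a.s. for \emph{every} bounded sequence $(x_n)$ in $\mathbf{B}$, and this property is a known characterization of stable type $p$ for $1\le p<2$ (see, e.g., \cite{LedouxTalagrand}; for $p=1$ it is Beck's convexity theorem); hence $\mathbf{B}$ is of stable type $p$. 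For a self-contained argument one proves the contrapositive: if $\mathbf{B}$ is not of stable type $p$, then by results of Maurey and Pisier $\ell_{p_0}$ is finitely representable in $\mathbf{B}$ for some $1\le p_0\le p$, and one builds normalized $x_n\in\mathbf{B}$ and indices $n_1<n_2<\cdots$ so that, on block $k$, the $x_n$ ($n_{k-1}<n\le n_k$) span a subspace $(1+\delta_k)$-isomorphic to $\ell_{p_0}^{N_k}$ ($N_k:=n_k-n_{k-1}$) with $N_k$ chosen so large relative to $n_{k-1}$ that $\|\sum_{i\le n_k}\varepsilon_ix_i\|\ge c\,n_k^{1/p}$ for all sign patterns and all $k$; the sequence $\{\varepsilon_nx_n,n\ge1\}$ is then bounded, independent, symmetric, mean zero, \eqref{thm311} holds, but the Marcinkiewicz--Zygmund SLLN fails, so by Proposition~\ref{pqSLLN-implies-SLLN} so does \eqref{thm312}, contradicting (ii).

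The step I expect to be the main obstacle is the summation estimate in (i)$\Rightarrow$(ii): the crude bound $\sum_{i\le n}\mathbb{E}\|Y_i\|^p=O(n)$ only gives $\mathbb{E}(n^{-1/p}\|S_n^{(1)}\|)^q=O(1)$, which is not summable after division by $n$, so the decay must be extracted from the weak-$L^p$ term of the stable type $p$ inequality — exactly the quantity governed by \eqref{thm311}, and the one that in a merely Rademacher type $p$ space such as $\ell_p$ yields no decay at all, as the example in Section~\ref{sec:pqSLLN-implies-SLLN} shows. If instead the characterization of stable type $p$ invoked in (ii)$\Rightarrow$(i) is to be proved rather than quoted, the main work shifts to arranging the blocks above so that $\|S_{n_k}\|$ genuinely attains the critical order $n_k^{1/p}$ rather than $o(n_k^{1/p})$; the device of phrasing everything through bounded deterministic sequences $(x_n)$, rather than through truncated $p$-stable variables, is what keeps the domination hypothesis \eqref{thm311} trivially satisfied.
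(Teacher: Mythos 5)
Your (ii)$\Rightarrow$(i) argument is essentially the paper's: Rademacher signs on a bounded deterministic sequence, stochastic domination by a constant so that \eqref{thm311} is trivial, Proposition \ref{pqSLLN-implies-SLLN} (applicable because the $X_k=\varepsilon_k x_k$ are symmetric and dominated with finite $p$th moment) to pass from \eqref{thm312} to the Marcinkiewicz--Zygmund SLLN, and then the known characterization of stable type $p$ via a.s.\ convergence of $n^{-1/p}\sum_{k\le n}\varepsilon_k x_k$ for bounded $\{x_k\}$ (the paper quotes Theorem V.9.3 of Woyczy\'nski where you quote Ledoux--Talagrand/Beck); your optional contrapositive construction is not needed.

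The gap is in (i)$\Rightarrow$(ii), at exactly the step you flag as "the main obstacle" but do not resolve. Your scheme truncates only at level $n$ (i.e.\ $Y_k=X_k\mathbf{1}(\|X_k\|^p\le k$ or $\le n)$), symmetrizes, and then asserts that a stable type $p$ moment inequality plus \eqref{thm311} makes $\sum_n n^{-1-q/p}\mathbb{E}\|S_n^{(1)}\|^q$ converge. With that single truncation the weak-type quantity in the Hechner--Heinkel/stable-type bound is $\sup_{0<t\le n^{1/p}}t^{r}\,n\,\mathbb{P}(\|X\|>t)$ with $r>p$, and since \eqref{thm311} only gives $t^p\mathbb{P}(\|X\|>t)\to 0$, the best you get is $\mathbb{E}\|S_n^{(1)}\|^q=o(n^{q/p})$, i.e.\ terms $o(1/n)$, which is not summable; so "the summation over $n$ converges precisely because of \eqref{thm311}" does not follow from the setup you describe. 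The paper's proof needs a second, finer truncation at the quantile $u_n$: the sum is split as $S_n=U_{n,n}^{(1)}+(S_n-U_{n,n}^{(1)})$ with $U_{n,n}^{(1)}$ built from $X_k\mathbf{1}(\|X_k\|^p\le u_n)$. The below-quantile part is controlled by the stable type $p$ inequality in the form of Lemma \ref{Rosalsky02} (the $0<q\le 1<r<2$ extension of Hechner--Heinkel, with stable type $p$ upgraded to stable type $r>p$), and the resulting series is summed by exploiting the quantile property $\mathbb{P}(\|X\|^q>t)\ge 1/k$ for $t<u_k^q$, which is what converts the weak-norm bounds into the integral $\int_0^\infty\mathbb{P}^{q/p}(\|X\|^q>t)\,\mathrm{d}t$ (this is Lemma \ref{LQR01}(ii), the analogue of Lemma 3.4 of \cite{LQR16}). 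The above-quantile part is handled in expectation as in Lemma 3.3 of \cite{LQR16}, using only that every Banach space is of Rademacher type $q$ for $q\le 1$, again via \eqref{thm311}; your sketch does not treat this range at all, and your appeal to "the proof of Theorem \ref{thm15}" is circular relative to the paper's structure, since that sufficiency is itself deduced from the present implication. So the plan is directionally right (truncate, center using mean zero and domination, use stable type through a weak-moment inequality), but the decisive quantile-truncation mechanism that produces summability is missing.
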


To prove Theorem \ref{thm:stable11}, we first present some preliminaries.
Let $\{X_k,1\le k\le n\}$ be $n$ independent real-valued random variables
and $\{X_{k}^{*},1\le k\le n\}$ the nonincreasing rearrangement of
the sequence $\{|X_k|,1\le k\le n\}$. Then the Marcus--Pisier inequality \cite{MarcusPisier} (see also Pisier \cite[Lemma 4.11]{Pisier}) asserts that for all $r\ge 1$,
\begin{equation}\label{MP}
\mathbb{P}\left(\sup_{1\le k\le n}k^{1/r}X_{k}^{*}>u\right) \le \dfrac{2\mathrm{e}}{u^{r}}\sup_{t>0}\left(t^r\sum_{k=1}^n \mathbb{P}(|X_k|>t)\right) \text{ for all }u>0.
\end{equation}

When $q=1$, the following lemma is Lemma 1 of Hechner and Heinkel \cite{HechnerHeinkel}. Li, Qi, and Rosalsky \cite{LQR16} generalized Lemma 1 of Hechner and Heinkel \cite{HechnerHeinkel} for the case where $1\le q<r<2$ (see Lemma 3.1  Li, Qi, and Rosalsky \cite{LQR16}). Lemma \ref{Rosalsky02} shows that their results also hold when
$0<q\le 1< r<2$.

\begin{lemma}\label{Rosalsky02} Let $0<q\le 1< r<2$ and let $\mathbf{B}$ be a Banach space of stable type $r$. Then
	there exists a constant $C(q,r)>0$ depending only on $q$ and $r$ such that, for every finite sequence $\{X_k,1\le k\le n\}$
	of independent $\mathbf{B}$-valued random variables with $\max_{1\le k\le n}\E(\|X_k\|^q)<\infty$,
	\begin{equation}\label{HH01}
	\mathbb{E}\left(\left\|\sum_{k=1}^n (X_k-EX_k)\right\|^q\right) \le C(q,r)\left(\sup_{t>0}t^{r/q}\sum_{k=1}^n \mathbb{P}(\|X_k\|^q>t)\right)^{q/r}.
	\end{equation}
\end{lemma}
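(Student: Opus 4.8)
The plan is to reduce the case $0<q\le 1$ to the already-known case $q=1$ (Lemma 1 of Hechner and Heinkel \cite{HechnerHeinkel}) by a truncation-at-scale argument combined with the Marcus--Pisier inequality \eqref{MP}. Write $Z_k=X_k-\E X_k$ and set $S=\sum_{k=1}^n Z_k$. Since $0<q\le 1$, the key obstacle is that we cannot directly split $\E(\|S\|^q)$ or apply a moment inequality of order $q$; instead I would use the layer-cake formula
\[
\E(\|S\|^q)=\int_0^\infty \P(\|S\|^q>u)\,\mathrm{d}u=\int_0^\infty \P(\|S\|>v)\,q v^{q-1}\,\mathrm{d}v
\]
and estimate the tail probability $\P(\|S\|>v)$ directly. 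Denote $M:=\sup_{t>0}\bigl(t^{r/q}\sum_{k=1}^n\P(\|X_k\|^q>t)\bigr)$, equivalently $M=\bigl(\sup_{t>0} t^{r}\sum_{k=1}^n\P(\|X_k\|>t)\bigr)^{q/r}$; the claim is $\E(\|S\|^q)\le C(q,r)M$. By homogeneity we may rescale the $X_k$ so that $M=1$, i.e. $\sum_{k=1}^n \P(\|X_k\|>t)\le t^{-r}$ for all $t>0$.

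The main step is to decompose, at each level $v$, the sum $S$ into a "small part" and a "big part" relative to a threshold proportional to $v$. Fix $v>0$ and put $T_k=Z_k\mathbf{1}(\|Z_k\|\le v)$ and $R_k=Z_k-T_k$. For the big part I bound $\P(\sum_k R_k\ne 0)\le \sum_k\P(\|Z_k\|>v)$, and here I use $\|Z_k\|\le \|X_k\|+\E\|X_k\|$ together with $\E\|X_k\|\le \int_0^\infty\P(\|X_k\|>s)\,\mathrm{d}s$ and the hypothesis to control $\sum_k\P(\|Z_k\|>v)$ by a constant times $v^{-r}$ for $v$ bounded away from zero (the region of small $v$ contributes only a bounded amount to $\int_0^\infty q v^{q-1}\P(\|S\|>v)\,\mathrm{d}v$ anyway, since $\P(\|S\|>v)\le 1$ and $q v^{q-1}$ is integrable near $0$). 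For the small (truncated) part I apply Lemma 1 of Hechner and Heinkel \cite{HechnerHeinkel}, i.e. the $q=1$ case, to $\{T_k-\E T_k\}$ to get
\[
\E\Bigl(\Bigl\|\sum_{k=1}^n (T_k-\E T_k)\Bigr\|\Bigr)\le C(r)\sup_{t>0}\Bigl(t^{r}\sum_{k=1}^n\P(\|T_k\|>t)\Bigr)^{1/r}\le C(r),
\]
and then Markov's inequality gives $\P(\|\sum_k (T_k-\E T_k)\|>v/2)\le 2C(r)/v$. The centering correction $\|\sum_k\E T_k - \sum_k\E Z_k\| = \|\sum_k \E R_k\|\le\sum_k\E(\|Z_k\|\mathbf{1}(\|Z_k\|>v))$ must be shown to be $\le v/2$ for $v$ large, using $r<2$ so that $\sum_k\E(\|Z_k\|\mathbf{1}(\|Z_k\|>v))\le C v^{1-r}\to 0$; for small $v$ this correction is again absorbed by the trivial bound on $\P(\|S\|>v)$. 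Assembling, for $v$ in the relevant range,
\[
\P(\|S\|>v)\le \P\Bigl(\Bigl\|\sum_k(T_k-\E T_k)\Bigr\|>v/4\Bigr)+\P\Bigl(\sum_k R_k\ne 0\Bigr)\le \frac{4C(r)}{v}+\frac{C'}{v^{r}} .
\]

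Finally I would plug this into $\E(\|S\|^q)=\int_0^\infty q v^{q-1}\P(\|S\|>v)\,\mathrm{d}v$, splitting the integral at $v=1$: on $(0,1)$ use $\P(\|S\|>v)\le 1$ and $\int_0^1 q v^{q-1}\,\mathrm{d}v=1$; on $(1,\infty)$ use the bound just derived, noting $\int_1^\infty q v^{q-1}(v^{-1}+v^{-r})\,\mathrm{d}v<\infty$ because $q<1$ (so $q-2<-1$) and $q<1<r$ (so $q-1-r<-1$). This yields $\E(\|S\|^q)\le C(q,r)$ with $M=1$, and undoing the normalization gives \eqref{HH01}. The hard part will be the careful bookkeeping of the truncation: ensuring the two centering corrections (replacing $\E T_k$ by $\E Z_k$, and the passage $\|Z_k\|\le\|X_k\|+\E\|X_k\|$) are genuinely controlled by the single quantity $M$ and do not secretly require a moment assumption stronger than $\E\|X_k\|^q<\infty$ — this is where $r<2$ is essential, since it makes $\int_0^\infty \P(\|X_k\|>s)\,\mathrm{d}s$ and the tail-truncated first moments summable against the Marcus--Pisier bound. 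Alternatively, if this direct route proves delicate, one can instead mimic the proof of Lemma 3.1 of Li, Qi, and Rosalsky \cite{LQR16} verbatim, replacing each use of the triangle/Minkowski inequality in $L^q$ (valid for $q\ge1$) with the elementary inequality $\|a+b\|^q\le\|a\|^q+\|b\|^q$ (valid for $0<q\le1$) and replacing the moment splitting by the layer-cake argument above; I expect the constants to come out depending only on $q$ and $r$ as claimed.
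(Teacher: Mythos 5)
Your route misses the one observation that makes this lemma a two-line corollary of the $q=1$ case, and that is exactly how the paper proves it: since $0<q\le 1$, the map $x\mapsto x^q$ is concave, so Liapunov's (Jensen's) inequality gives
\[
\mathbb{E}\Bigl(\Bigl\|\sum_{k=1}^n (X_k-\mathbb{E}X_k)\Bigr\|^q\Bigr)\le\Bigl(\mathbb{E}\Bigl\|\sum_{k=1}^n (X_k-\mathbb{E}X_k)\Bigr\|\Bigr)^{q},
\]
and one then applies Lemma 1 of Hechner--Heinkel directly to the \emph{untruncated} sum and notes the trivial identity $\sup_{t>0}t^{r}\sum_k\mathbb{P}(\|X_k\|>t)=\sup_{t>0}t^{r/q}\sum_k\mathbb{P}(\|X_k\|^q>t)$ (substitute $t\mapsto t^{1/q}$). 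No truncation, no layer-cake, no level-dependent centering is needed; your stated worry that ``we cannot directly split $\E(\|S\|^q)$ or apply a moment inequality of order $q$'' is precisely what concavity resolves. (The implicit integrability needed to form $\mathbb{E}X_k$ is harmless: if the right-hand side of \eqref{HH01} is finite then $r>1$ forces $\mathbb{E}\|X_k\|<\infty$; otherwise there is nothing to prove.)

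As for your sketch itself: it is a plausible weak-$L^r$/truncation argument, but the steps you defer as ``bookkeeping'' are the real content and are not routine as written. To apply Hechner--Heinkel to $\{T_k-\mathbb{E}T_k\}$ and to bound $\sum_k\mathbb{E}(\|Z_k\|\mathbf{1}(\|Z_k\|>v))$ you need $\sup_{t>0}t^{r}\sum_k\mathbb{P}(\|Z_k\|>t)\le C_r$ after normalization, and passing from $X_k$ to $Z_k=X_k-\mathbb{E}X_k$ is not immediate: for small $t$ the term $\sum_k\mathbf{1}(\|\mathbb{E}X_k\|>t/2)$ can be of order $n$, and one must show by a separate counting argument (e.g.\ $\#\{k:\mathbb{E}\|X_k\|>t/2\}\le C_r t^{-r}$, obtained by integrating the normalized tail bound from $t/4$ to $\infty$) that it is still $O(t^{-r})$; this is doable but absent from your proposal. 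Here it is $r>1$, not $r<2$, that makes these first-moment tail integrals converge, so your closing remark misidentifies where $r<2$ enters (it is only needed for stable type $r$ and the Hechner--Heinkel lemma). Finally, your integration step uses $\int_1^\infty qv^{q-1}v^{-1}\,\mathrm{d}v<\infty$, which requires $q<1$ strictly, so the endpoint $q=1$ of the lemma is not covered by your argument and would have to be quoted separately from Hechner--Heinkel. In short: completable for $0<q<1$ with additional work, but far heavier than the paper's proof, whose only idea is the concavity reduction you overlooked.
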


\begin{proof}
Since $0<q\le 1< r<2$, we have
	\begin{align}\label{HH02}
	\begin{split}
	\mathbb{E}\left(\left\|\sum_{k=1}^n (X_k-EX_k)\right\|^q\right) & \le \left(\mathbb{E}\left\|\sum_{k=1}^n (X_k-EX_k)\right\|\right)^{q}\\
	& \le \left( C(r)  \left(\sup_{t>0} t^r \sum_{k=1}^n \mathbb{P}(\|X_k\|>t)\right)^{1/r}\right)^q\\
	& = (C(r))^q \left( \sup_{t>0} t^r \sum_{k=1}^n \mathbb{P}(\|X_k\|>t)\right)^{q/r}\\
	& := C(q,r)  \left( \sup_{t>0} t^{r/q} \sum_{k=1}^n \mathbb{P}(\|X_k\|^q>t)\right)^{q/r},
	\end{split}
	\end{align}
	where we have applied Liapunov's inequality in the first inequality and Lemma 1 of Hechner and Heinkel \cite{HechnerHeinkel} in the
	second inequality. This completes the proof of Lemma \ref{Rosalsky02}.
\end{proof}
The following result is a variation of Lemma \ref{Rosalsky02} for the case where $0<q<r<1$.
\begin{lemma}\label{Rosalsky01} Let $0<q<r<1$. Then
	for every finite sequence $\{X_k,1\le k\le n\}$
	of independent random variables with $\max_{1\le k\le n}\mathbb{E}\left(\|X_k\|^q\right)<\infty$,
	\begin{equation}\label{HH03}
	\mathbb{E}\left(\left\|\sum_{k=1}^n X_k\right\|^q\right) \le C_{1}(q,r)\left(\sup_{t>0}t^{r/q}\sum_{k=1}^n \mathbb{P}(\|X_k\|^q>t)\right)^{q/r},
	\end{equation}
	where 
	\[C_1(q,r)=\left(\dfrac{1}{1-r}\right)^q\left(1+\dfrac{2q\mathrm{e}}{r-q}\right).\]
\end{lemma}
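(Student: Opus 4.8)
The plan is to mimic the proof of Lemma \ref{Rosalsky02}, but since we can no longer pass to the $\mathcal{L}_1$ norm and invoke a stable-type inequality (we would need $r\ge 1$ for that), we work directly with the Marcus--Pisier inequality \eqref{MP}. First I would reduce everything to a bound on $\mathbb{E}(\|\sum_{k=1}^n X_k\|^q)$ in terms of the quantity $A:=\sup_{t>0}t^{r/q}\sum_{k=1}^n\mathbb{P}(\|X_k\|^q>t)$. Since $0<q<1$, the triangle inequality combined with the elementary inequality $(\sum a_i)^q\le\sum a_i^q$ for nonnegative reals gives
\[
\left\|\sum_{k=1}^n X_k\right\|^q\le\left(\sum_{k=1}^n\|X_k\|\right)^q=\left(\sum_{k=1}^n\|X_k\|^q\cdot\|X_k\|^{1-q}\right)^{?},
\]
which is not immediately the right route; instead I would pass to the real-valued sequence $\{\|X_k\|,1\le k\le n\}$, let $\{X_k^*\}$ be its nonincreasing rearrangement, and use
\[
\left\|\sum_{k=1}^n X_k\right\|^q\le\left(\sum_{k=1}^n\|X_k\|\right)^q=\left(\sum_{k=1}^n X_k^*\right)^q\le\sum_{k=1}^n (X_k^*)^q,
\]
the last step again by subadditivity of $x\mapsto x^q$ on $[0,\infty)$. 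So it suffices to control $\mathbb{E}\big(\sum_{k=1}^n (X_k^*)^q\big)=\sum_{k=1}^n\mathbb{E}\big((X_k^*)^q\big)$.

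Next I would estimate each $\mathbb{E}((X_k^*)^q)$ via the tail integral: writing $W_k:=k^{1/r}X_k^*$, the Marcus--Pisier inequality \eqref{MP} applied to the real random variables $\{\|X_j\|^q\}$ (or more precisely rescaled so that the exponent matches) yields, for every $k$,
\[
\mathbb{P}\big(W_k>u\big)\le\mathbb{P}\Big(\sup_{1\le j\le n}j^{1/r}X_j^*>u\Big)\le\frac{2\mathrm{e}}{u^{r}}\sup_{t>0}\Big(t^r\sum_{j=1}^n\mathbb{P}(\|X_j\|>t)\Big)=\frac{2\mathrm{e}}{u^{r}}\,A
\]
(after recasting $A$ in terms of $\|X_j\|$ rather than $\|X_j\|^q$; here $t^r=t^{r/q}$ composed with the substitution $t\mapsto t^{1/q}$, so the sup is literally $A$). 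Then
\[
(X_k^*)^q=k^{-q/r}W_k^{q},\qquad \mathbb{E}(W_k^q)=\int_0^\infty q u^{q-1}\mathbb{P}(W_k>u)\,\mathrm{d}u.
\]
Splitting the integral at $u_0:=A^{1/r}$: on $[0,u_0]$ use the trivial bound $\mathbb{P}(W_k>u)\le1$, giving $\int_0^{u_0}qu^{q-1}\,\mathrm{d}u=u_0^{q}=A^{q/r}$; on $[u_0,\infty)$ use the Marcus--Pisier bound, giving $2\mathrm{e}A\int_{u_0}^\infty qu^{q-1-r}\,\mathrm{d}u=\frac{2\mathrm{e}q}{r-q}A\,u_0^{q-r}=\frac{2\mathrm{e}q}{r-q}A^{q/r}$, where convergence uses $q<r$. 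Hence $\mathbb{E}((X_k^*)^q)\le k^{-q/r}\big(1+\tfrac{2\mathrm{e}q}{r-q}\big)A^{q/r}$, and summing over $k$ with $\sum_{k=1}^n k^{-q/r}\le\int_0^n x^{-q/r}\,\mathrm{d}x=\frac{n^{1-q/r}}{1-q/r}\le\big(\tfrac{1}{1-r}\big)\cdot n^{1-q/r}$—wait, the exponent bookkeeping must be done carefully; the crude bound $\sum_{k\ge1}k^{-q/r}$ diverges, so the $n$-dependence must cancel against the definition of $A$. This is the point where I would be most careful: one checks that $A\ge t_0^{r/q}\sum_k\mathbb{P}(\|X_k\|^q>t_0)$ is automatically of order $n^{q/r}$ times a constant when scaled correctly, so $A^{q/r}\sum_{k=1}^n k^{-q/r}$ collapses to a constant multiple of $\big(\sup_t t^{r/q}\sum_k\mathbb{P}(\|X_k\|^q>t)\big)^{q/r}$ with constant $\big(\tfrac{1}{1-r}\big)^q\big(1+\tfrac{2\mathrm{e}q}{r-q}\big)$—I suspect the clean way is to absorb the $k^{-q/r}$ sum into the Marcus--Pisier sup directly rather than bounding it separately.

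The main obstacle, then, is the exponent/constant bookkeeping: reconciling the factor $\big(\tfrac{1}{1-r}\big)^q$ in the claimed $C_1(q,r)$ with the partial-sum estimate $\sum_{k=1}^n k^{-q/r}$, and making sure the $n$-dependence genuinely cancels. I expect this is handled by applying Marcus--Pisier not to get a uniform tail bound but by integrating $\sum_k k^{-q/r}\mathbb{P}(k^{1/r}X_k^*>u)$ as a single object, using $\sum_{k=1}^n k^{-q/r}\cdot k^{-(r-q)/r}=\sum_{k=1}^n k^{-1}$... no—rather, bounding $\sum_k (X_k^*)^q$ pathwise by $\big(\tfrac{1}{1-r}\big)^q\sup_k(k^{1/r}X_k^*)^q$ via an Abel/Hardy-type summation (since $X_k^*$ is nonincreasing, $\sum_{k=1}^n (X_k^*)^q\le\sup_k k\cdot(X_k^*)^q\cdot\sum_{k=1}^n k^{-1}$ is too lossy, but $\sum_{k=1}^n(X_k^*)^q\le\big(\sup_k k^{1/r}X_k^*\big)^q\sum_{k=1}^\infty k^{-q/r}$ fails since $q/r<1$; the correct monotone bound is $\sum_{k=1}^n(X_k^*)^q\le\frac{1}{1-q/r}\big(\sup_k k^{1/r}X_k^*\big)^q$, valid because $X_k^*\le k^{-1/r}M$ where $M:=\sup_j j^{1/r}X_j^*$ forces $\sum_k(X_k^*)^q\le M^q\sum_k k^{-q/r}$—which still diverges, so one truncates at $k\le (M/X_k^*)^r$...). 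I will organize this final step as: bound $\mathbb{E}(\|S_n\|^q)\le\mathbb{E}\big((\sup_k k^{1/r}X_k^*)^q\big)\cdot\big(\tfrac{1}{1-r}\big)^q$ using $\frac{1}{1-q/r}\le\frac{1}{1-r}$ (since $q/r<r$ would be needed—actually $q/r\le q<1$ and we want $\frac{1}{1-q/r}$, and $q/r< r<1$ gives $\frac1{1-q/r}<\frac1{1-r}$, wait that needs $q/r<r$ i.e. $q<r^2$, which may fail), then apply the integral estimate above to $\mathbb{E}\big((\sup_k k^{1/r}X_k^*)^q\big)$ directly. The referee-proof version will need one of these monotone-rearrangement inequalities stated as a preliminary sublemma; modulo that, the computation is the split-integral bound already sketched.
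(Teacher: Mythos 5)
Your overall skeleton (nonincreasing rearrangement, the Marcus--Pisier inequality \eqref{MP} applied to the real random variables $\|X_k\|^q$ with exponent $r/q>1$, and the split of the tail integral at $\Delta^{q/r}$, where $\Delta=\sup_{t>0}t^{r/q}\sum_{k=1}^n\mathbb{P}(\|X_k\|^q>t)$) is the same as the paper's, and your split-integral computation producing the factor $1+\frac{2q\mathrm{e}}{r-q}$ is exactly right. But the argument does not close, and you say so yourself: the premature use of subadditivity $(\sum_k a_k)^q\le\sum_k a_k^q$ is the wrong reduction. It leaves you with $\sum_{k=1}^n k^{-q/r}\,\mathbb{E}(W_k^q)$, and since $q/r<1$ the weight $\sum_{k=1}^n k^{-q/r}\sim n^{1-q/r}$ diverges with $n$, so an $n$-dependent factor survives that cannot be absorbed into $\Delta$. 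None of the patches you float repairs this: the proposed ``monotone bound'' $\sum_{k=1}^n(X_k^*)^q\le\frac{1}{1-q/r}\bigl(\sup_k k^{1/r}X_k^*\bigr)^q$ is simply false (take $X_1^*=\dots=X_n^*=1$: the left side is $n$, the right side is of order $n^{q/r}$), and the comparison $\frac{1}{1-q/r}\le\frac{1}{1-r}$ you would need elsewhere requires $q\le r^2$, which may fail.

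The fix --- and it is the paper's argument --- is to extract the supremum \emph{before} raising to the power $q$, so that the summable weight $k^{-1/r}$ (not $k^{-q/r}$) appears: pathwise, with $M:=\sup_{1\le k\le n}k^{1/r}X_k^*$, one has $\sum_{k=1}^n\|X_k\|=\sum_{k=1}^n\bigl(k^{1/r}X_k^*\bigr)k^{-1/r}\le M\sum_{k=1}^{\infty}k^{-1/r}\le\frac{M}{1-r}$, using only $1/r>1$, i.e. $r<1$. Hence $\mathbb{E}\bigl(\bigl\|\sum_{k=1}^nX_k\bigr\|^q\bigr)\le\bigl(\frac{1}{1-r}\bigr)^q\,\mathbb{E}(M^q)$, and since $M^q=\sup_{1\le k\le n}k^{q/r}(\|X_k\|^q)^*$, your Marcus--Pisier tail bound and the split at $\Delta^{q/r}$, applied once to $\mathbb{E}(M^q)=\int_0^\infty\mathbb{P}(M^q>u)\,\mathrm{d}u$, give $\mathbb{E}(M^q)\le\bigl(1+\frac{2q\mathrm{e}}{r-q}\bigr)\Delta^{q/r}$, which is \eqref{HH03} with exactly the constant $C_1(q,r)$. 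In short: your estimate of $\mathbb{E}(M^q)$ is the right one; what is missing is the elementary pathwise step $\sum_{k=1}^nX_k^*\le\frac{1}{1-r}M$, which is where the factor $\bigl(\frac{1}{1-r}\bigr)^q$ comes from and which makes the $n$-dependence disappear.
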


\begin{proof} Let $\{\|X_{k}\|^{*},1\le k\le n\}$ be the nonincreasing rearrangement of $\{\|X_{k}\|,1\le k\le n\}$.
	Since $0<q<r<1$,
	\begin{align}\label{HH05}
	\begin{split}
	\mathbb{E}\left(\left\|\sum_{k=1}^n X_k\right\|^q\right) & \le \mathbb{E}\left(\sum_{k=1}^n \left\|X_k\right\|\right)^{q}\\
	& = \mathbb{E}\left(\sum_{k=1}^n \left(k^{1/r}\|X_k\|^*\right)k^{-1/r}\right)^{q}\\
	& \le \mathbb{E}\left(\sup_{1\le k\le n}\left(k^{q/r}(\|X_k\|^*)^q\right)\left(\sum_{k=1}^n k^{-1/r}\right)^{q}\right)\\
	& = \mathbb{E}\left(\sup_{1\le k\le n}\left(k^{q/r}(\|X_k\|^q)^*\right)\left(\sum_{k=1}^n k^{-1/r}\right)^{q}\right)\\
	& \le \left(\dfrac{1}{1-r}\right)^q \mathbb{E}\left(\sup_{1\le k\le n}\left(k^{q/r}(\|X_k\|^q)^*\right)\right)\\
	& = \left(\dfrac{1}{1-r}\right)^q \int_{0}^\infty\mathbb{P}\left(\sup_{1\le k\le n}\left(k^{q/r}(\|X_k\|^q)^*\right)>u\right)\mathrm{d} u.
	\end{split}
	\end{align}
	Let $\Delta=\sup_{t>0}t^{r/q}\sum_{k=1}^n \mathbb{P}\left(\|X_k\|^q>t\right)$. Applying \eqref{MP}, we have
	\begin{align}\label{HH07}
	\begin{split}
	&\int_{0}^\infty\mathbb{P}\left(\sup_{1\le k\le n}\left(k^{q/r}(\|X_k\|^q)^*\right)>u\right)\mathrm{d} u\\ 
	& = \left(\int_{0}^{\Delta^{q/r}}+\int_{\Delta^{q/r}}^{\infty}\right)\mathbb{P}\left(\sup_{1\le k\le n}\left(k^{q/r}(\|X_k\|^q)^*\right)>u\right)\mathrm{d} u\\
	&\le \Delta^{q/r}+ 2\mathrm{e}\int_{\Delta^{q/r}}^\infty\dfrac{\Delta}{u^{r/q}} \mathrm{d} u\\
	& = \left(1+\dfrac{2q\mathrm{e}}{r-q}\right) \Delta^{q/r}.
	\end{split}
	\end{align}
	Combining \eqref{HH05} and \eqref{HH07}, we obtain \eqref{HH03}.
\end{proof}

Motivated by Lemma $3.4$ of Li, Qi, and Rosalsky \cite{LQR16} which considered the case where $1 \le q \le p < 2$
and i.i.d. random variables, we have the following lemma.

\begin{lemma}\label{LQR01}
	Let $0<q\le p<2$, and let $\{X_n\}$ be a sequence of independent $\mathbf{B}$-valued random variables. Suppose that $\{X_n, n\ge 1\}$ is stochastically dominated by a random variable $X$ satisfying
	\begin{equation}\label{th27}
	\int_{0}^\infty \mathbb{P}^{q/p}\left(\|X\|^q>t\right)\mathrm{d} t<\infty
	\end{equation}
	For each $n\ge 1$, let the quantile $u_n$ of order $1-1/n$  of $\|X\|$ be defined as in Section \ref{sec:intro}, and set
	\[Y_{n, k}=X_{k}\mathbf{1}(\|X_k\|^p\leq u_n),\ Z_{n, k}=X_{k}\mathbf{1}(\|X_k\|^p\leq n),\] 
	\[U_{n,k}=\sum_{i=1}^{k}Z_{n,i},\ U_{n,k}^{(1)}=\sum_{i=1}^{k}Y_{n,i},\  U_{n,k}^{(2)}= U_{n,k}- U_{n,k}^{(1)}.\]
	Then the following statements holds.
	\begin{itemize}
		\item[(i)] If $0<p<1$, then

	\begin{equation}\label{th28}
	\sum_{n=1}^{\infty}\dfrac{\mathbb{E}\left(\|U_{n,n}^{(1)}\|^q\right)}{n^{1+q/p}}<\infty.
	\end{equation}
\item[(ii)] If $1\le p<2$ and $\mathbf{B}$ is of stable type $p$, then 
	\begin{equation}\label{th28b}
\sum_{n=1}^{\infty}\dfrac{\mathbb{E}\left(\|U_{n,n}^{(1)}-\mathbb{E}U_{n, n}^{(1)}\|^q\right)}{n^{1+q/p}} <\infty.
\end{equation}
	\end{itemize}

	In particular, if $\mathbb{E}(\|X\|^p)<\infty$, then the following statements holds.
	\begin{itemize}
		\item[(iii)] If $0<p<1$, then 
		\begin{equation}\label{th29}
		\sum_{n=1}^{\infty}\dfrac{\mathbb{E}\left(\|U_{n,n}^{(1)}\|^p\right)}{n^{2}}<\infty.
		\end{equation}
		\item [(iv)] If $1\le p<2$ and $\mathbf{B}$ is of stable type $p$, then 
	\begin{equation}\label{th29b}
\sum_{n=1}^{\infty}\dfrac{\mathbb{E}\left(\|U_{n,n}^{(1)}-\mathbb{E}U_{n, n}^{(1)}\|^p\right)}{n^{2}} <\infty.
\end{equation}
	\end{itemize}
\end{lemma}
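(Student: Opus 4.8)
The plan is to obtain all four statements from a single scheme built on the weak-type moment inequalities above. For (i), since $0<q\le p<1$ I would fix $r$ with $p<r<1$; then $q<r<1$, so Lemma~\ref{Rosalsky01} applies to the independent variables $Y_{n,1},\dots,Y_{n,n}$. Recalling that $\|Y_{n,k}\|\le u_n$ (so that $\{\|Y_{n,k}\|^q>t\}$ is empty for $t\ge u_n^q$), stochastic domination gives $\sum_{k=1}^{n}\mathbb{P}(\|Y_{n,k}\|^q>t)\le n\,\mathbb{P}(\|X\|^q>t)\mathbf{1}(t<u_n^q)$, whence
\[
\mathbb{E}\bigl(\|U_{n,n}^{(1)}\|^q\bigr)\le C_1(q,r)\Bigl(n\sup_{0<t<u_n^q}t^{r/q}\,\mathbb{P}(\|X\|^q>t)\Bigr)^{q/r}.
\]
The whole difficulty is to make $\sum_{n}n^{-1-q/p}\mathbb{E}(\|U_{n,n}^{(1)}\|^q)$ converge: every ``one-shot'' bound of this supremum (replacing it by $(u_n^q)^{r/q}$, or using $\mathbb{P}(\|X\|^q>t)\ge 1/n$ on $(0,u_n^q)$ to get $n^{q/p-q/r}\int_0^\infty\mathbb{P}^{q/p}(\|X\|^q>s)\,\mathrm{d} s$) produces only the borderline divergent series $\sum 1/n$. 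Instead I would split $(0,u_n^q)$ into the roughly $\log_2 n$ dyadic quantile blocks $[u_{n_{j+1}}^q,u_{n_j}^q)$ with $n_j:=\lceil n2^{-j}\rceil$ (plus the initial block $[0,u_1^q)$), on which $t^{r/q}\le u_{n_j}^{r}$ and $\mathbb{P}(\|X\|^q>t)\le 1/n_{j+1}\le 2^{j+1}/n$; summing the block bounds and using the sub-additivity of $x\mapsto x^{q/r}$ gives $\mathbb{E}(\|U_{n,n}^{(1)}\|^q)\le C\sum_{j}2^{jq/r}u_{n_j}^{q}$.

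I would then interchange the sums over $n$ and $j$ and, for each fixed $j$, regroup the values of $n$ according to $m=\lceil n2^{-j}\rceil$ — there are about $2^j$ such $n$, all comparable to $m2^j$, and the constraint $j\le\lceil\log_2 n\rceil$ confines the case $m=1$ to $2^{j-1}<n\le 2^j$, keeping it under control. This produces
\[
\sum_{n\ge 1}\frac{\mathbb{E}(\|U_{n,n}^{(1)}\|^q)}{n^{1+q/p}}\;\le\;C\Bigl(\sum_{j\ge 0}2^{j(q/r-q/p)}\Bigr)\Bigl(\sum_{m\ge 1}\frac{u_m^q}{m^{1+q/p}}\Bigr).
\]
The geometric series converges because $r>p$, and a routine computation with the quantiles (using $\mathbb{P}(\|X\|^q>u_m^q)\le 1/m\le\mathbb{P}(\|X\|^q\ge u_m^q)$ and Abel summation) shows that $\sum_{m\ge 1}u_m^q m^{-1-q/p}<\infty$ is equivalent to the hypothesis $\int_0^\infty\mathbb{P}^{q/p}(\|X\|^q>t)\,\mathrm{d} t<\infty$. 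This proves (i).

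Part (ii) follows by the identical argument with Lemma~\ref{Rosalsky01} replaced by Lemma~\ref{Rosalsky02}: since $\mathbf{B}$ is of stable type $p$ with $1\le p<2$, it is of stable type $r$ for some $r\in(p,2)$, and (as $0<q\le1<r$) Lemma~\ref{Rosalsky02} gives the same estimate for the centred sums $U_{n,n}^{(1)}-\mathbb{E}U_{n,n}^{(1)}$ — centring being needed here only because the $r>1$ inequality controls centred sums, whereas Lemma~\ref{Rosalsky01} needed none; the rest is verbatim, and it is again $r>p$ (hence stable type $p$) that makes the geometric factor summable. Part (iii) is the case $q=p$ of (i): $\mathbb{E}(\|X\|^p)<\infty$ is exactly $\int_0^\infty\mathbb{P}^{p/p}(\|X\|^p>t)\,\mathrm{d} t<\infty$ and $1+q/p=2$. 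Part (iv) is proved like (ii), but since now $q=p\ge 1$ one invokes the weak-type moment inequality valid for $1\le q<r<2$ (Lemma~3.1 of \cite{LQR16}) in place of Lemma~\ref{Rosalsky02}; taking $r\in(p,2)$ with $\mathbf{B}$ of stable type $r$, the dyadic-block estimate and the regrouping carry over with $q$ replaced by $p$, and $\mathbb{E}(\|X\|^p)=\int_0^\infty\mathbb{P}(\|X\|^p>t)\,\mathrm{d} t<\infty$ is equivalent to $\sum_{m\ge 1}u_m^p m^{-2}<\infty$.

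I expect the summation step to be the main obstacle: every naive estimate of $\sup_{0<t<u_n^q}t^{r/q}\mathbb{P}(\|X\|^q>t)$ lands on the divergent $\sum 1/n$, so the dyadic decomposition along the quantile scale of $X$ is the crucial device, and it is precisely that step which forces the choice $r>p$ — and hence, in (ii) and (iv), the hypothesis that $\mathbf{B}$ be of stable type $p$.
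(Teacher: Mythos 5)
Your argument is correct, but at the decisive step it takes a different route from the paper. The paper, after the same application of Lemma~\ref{Rosalsky01}, disposes of the supremum in one stroke by monotonization: it writes $t^{r/q}\,\mathbb{P}(\|X\|^q>t)=\bigl(\int_0^t\mathbb{P}^{q/r}(\|X\|^q>t)\,\mathrm{d} x\bigr)^{r/q}\le\bigl(\int_0^t\mathbb{P}^{q/r}(\|X\|^q>x)\,\mathrm{d} x\bigr)^{r/q}$, so that $\mathbb{E}\|U_{n,n}^{(1)}\|^q\le C\,n^{q/r}\int_0^{u_n^q}\mathbb{P}^{q/r}(\|X\|^q>x)\,\mathrm{d} x$, and then interchanges the sum over $n$ with the integral split over the quantile increments $[u_{k-1}^q,u_k^q)$, using $\mathbb{P}(\|X\|^q>x)\ge 1/k$ there to upgrade the exponent from $q/r$ to $q/p$ and land exactly on \eqref{th27}; your belief that every ``one-shot'' bound of the supremum must fail is therefore not accurate, though it does no harm. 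Your dyadic decomposition of $(0,u_n^q)$ along the quantile scale, with the regrouping $m=\lceil n2^{-j}\rceil$ and the reduction to $\sum_m u_m^q m^{-1-q/p}<\infty$ (which indeed follows from \eqref{th27} by the same bound $\mathbb{P}(\|X\|^q>x)\ge 1/k$ on $[u_{k-1}^q,u_k^q)$ — only this direction of your claimed equivalence is needed), achieves the same end and is correct, just longer and with more bookkeeping at the $m=1$/initial-block edge. For $p\ge 1$ the comparison is reversed: the paper simply defers to the argument of Lemma~3.4 of \cite{LQR16} (with Lemma~\ref{Rosalsky02} replacing their Lemma~3.1 when $q<1$, and with ``identically distributed'' relaxed to stochastic domination), whereas you make the argument self-contained by running your dyadic scheme on the centred weak-type inequality, together with the Maurey--Pisier upgrade from stable type $p$ to stable type $r\in(p,2)$ — which is also what underlies the cited proof. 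Two small points to tidy up: in part (ii) your wording covers only $0<q\le 1$, so for $1<q\le p$ you should say explicitly (as you do in (iv)) that Lemma~3.1 of \cite{LQR16} is substituted for Lemma~\ref{Rosalsky02}; and you read the truncation as $\|X_k\|\le u_n$ rather than the statement's $\|X_k\|^p\le u_n$ — this matches the convention used in the paper's own proof, so it is not a defect of your argument, but it is worth flagging the discrepancy.
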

\begin{proof}
	Firstly, we consider the case where $0 < q \le p < 1$. Let $p < r < 1$, and $C_1(q,r)$ be as in Lemma \ref{Rosalsky01}. By applying Lemma \ref{Rosalsky01}, we
	obtain
	\begin{align}\label{R07}
	\begin{split}
	\mathbb{E}\left(\left\|U_{n,n}^{(1)}\right\|^q\right) &\le C_1(q,r)\left(\sup_{t>0}t^{r/q}\sum_{k=1}^n \mathbb{P}(\|X_k\|^qI \{\|X_k\| \le u_n\}>t)\right)^{q/r}\\
	&= C_1(q,r)\left(\sup_{0\le t\le u_{n}^q}t^{r/q}\sum_{k=1}^n \mathbb{P}(\|X_k\|^qI \{\|X_k\| \le u_n\}>t)\right)^{q/r}\\
	& \le C_1(q,r) \left(n \sup_{0\le t \le u_{n}^q} t^{r/q} \mathbb{P}(\|X\|^q>t)\right)^{q/r}\\
	& = C_1(q,r) \left(n \sup_{0\le t \le u_{n}^q} \left(\int_{0}^t \mathbb{P}^{q/r}(\|X\|^q>t)\mathrm{d} x\right)^{r/q}\right)^{q/r}\\
	& \le C_1(q,r) \left(n \sup_{0\le t \le u_{n}^q} \left(\int_{0}^t \mathbb{P}^{q/r}(\|X\|^q>x)\mathrm{d} x\right)^{r/q}\right)^{q/r}\\
	& = C_1(q,r) n^{q/r} \int_{0}^{u_{n}^q} \mathbb{P}^{q/r}(\|X\|^q>x)\mathrm{d} x\\
	& = C_1(q,r) n^{q/r} \sum_{k=1}^n \int_{u_{k-1}^q}^{u_{k}^q} \mathbb{P}^{q/r}(\|X\|^q>x)\mathrm{d} x.
	\end{split}
	\end{align}
For $k\ge 1$ and $u_{k-1}^q \le x <u_{k}^q$, we have $\mathbb{P}(\|X\|^q>x)\ge 1/k$. It thus follows from \eqref{R07} that
	\begin{align*}
	\sum_{n=1}^\infty \dfrac{\mathbb{E}\left(\|U_{n,n}^{(1)}\|^q\right)}{n^{1+q/p}} 
	&\le C_1(q,r) \sum_{n=1}^\infty \dfrac{1}{n^{1+q/p-q/r}}\sum_{k=1}^n \int_{u_{k-1}^q}^{u_{k}^q} \mathbb{P}^{q/r}(\|X\|^q>x)\mathrm{d} x\\
	& = C_1(q,r) \sum_{k=1}^\infty \left(\int_{u_{k-1}^q}^{u_{k}^q} \mathbb{P}^{q/r}(\|X\|^q>x)\mathrm{d} x\right) \left(\sum_{n=k}^\infty \dfrac{1}{n^{1+q/p-q/r}}\right)\\
	& \le \left(1+\dfrac{pr}{q(r-p)}\right) C_1(q,r) \sum_{k=1}^\infty \dfrac{1}{k^{q/p-q/r}}\int_{u_{k-1}^q}^{u_{k}^q} \mathbb{P}^{q/r}(\|X\|^q>x)\mathrm{d} x \\
	& \le \left(1+\dfrac{pr}{q(r-p)}\right) C_1(q,r) \sum_{k=1}^\infty \int_{u_{k-1}^q}^{u_{k}^q} \mathbb{P}^{q/p}(\|X\|^q>x)\mathrm{d} x \\
	& = \left(1+\dfrac{pr}{q(r-p)}\right) C_1(q,r) \int_{0}^{\infty} \mathbb{P}^{q/p}(\|X\|^q>x)\mathrm{d} x<\infty
	\end{align*}
	thereby proving \eqref{th28} for the case where $0<q\le p<1$.
	
	For the case where $1\le q\le p<2$, Li, Qi, and Rosalsky \cite{LQR16} proved \eqref{th28}
	under stronger assumption that $\left\{X, X_n, n \ge 1\right\}$ are identically distributed random variables (see \cite[Lemma 3.4]{LQR16}). When the sequence $\left\{X_n, n \ge 1\right\}$ is stochastically dominated by $X$, their proof will be unchanged except for some simple modifications and therefore we conclude that Lemma \ref{LQR01} holds for the case where $1\le q\le p<2$. 
	
	Next, we consider the case where $0 < q < 1 \le p < 2$. Li, Qi, and Rosalsky \cite{LQR16} proved their Lemma 3.4 (\cite[p. 548]{LQR16}) by applying \eqref{HH01} for $1 \le q < r < 2$. In our Lemma \ref{Rosalsky02}, we have
	showed that \eqref{HH01} holds for the case where $0 < q < 1 < r < 2$. Then by using the same argument as in the proof of Lemma 3.4 of Li, Qi, and Rosalsky \cite{LQR16}, we obtain \eqref{th28} for the case where $0 < q < 1 \le p < 2.$
	
	Finally, by taking $q = p$, \eqref{th27} holds if and only if $\mathbb{E}(\|X\|^p) < \infty$, and \eqref{th28} coincides with \eqref{th29}, \eqref{th28b} coincides with \eqref{th29b}. Therefore, the last part of the lemma follows from the first part. This completes the proof.
\end{proof}

\begin{proof}[Proof of Theorem \ref{thm:stable11}]  Firstly, we verify the implication (i)$\Rightarrow $(ii).
	For each $n\ge 1$, let the quantile $u_n$ of order $1-1/n$  of $\|X\|$ be defined as in Section \ref{sec:intro},  and set for $1\le k\le n$,
	\[Y_{n, k}=X_{k}\mathbf{1}(\|X_k\|^p\leq u_n),\ U_{n,k}^{(1)}=\sum_{i=1}^{k}Y_{n,i}.\]
	By following the proof of Lemma 3.3 of Li, Qi, and Rosalsky \cite{LQR16} and noting that every real separable Banach space is of Rademacher type $q$ for $0 < q \le 1$, we have
	\begin{equation}\label{thm313}
	\sum_{n=1}^{\infty}\dfrac{\mathbb{E}\left(\left\|\left(S_n-U_{n,n}^{(1)}\right)-\mathbb{E}\left(S_n-U_{n,n}^{(1)}\right)\right\|^q\right)}{n^{1+q/p}}<\infty.
	\end{equation}
	By \eqref{th28b}, we have
	\begin{equation}\label{thm314}
	\sum_{n=1}^{\infty}\dfrac{\mathbb{E}\left(\left\|U_{n,n}^{(1)}-\mathbb{E}U_{n,n}^{(1)}\right\|^q\right)}{n^{1+q/p}}<\infty.
	\end{equation}
	Combining \eqref{thm313} and \eqref{thm314} and noting that $\mathbb{E}(S_n) = 0$, we obtain
	\[\sum_{n=1}^{\infty}\dfrac{\mathbb{E}(\|S_n\|^q)}{n^{1+q/p}}<\infty\]
	which yields \eqref{thm312}.
	
	We will now prove the implication (ii)$\Rightarrow $(i). Let $\left\{\varepsilon_k, k\ge 1\right\}$ be a Rademacher sequence and 
	let $\left\{x_k, k\ge 1\right\}$ be a sequence of elements in $\mathbf{B}$
	such that
	\begin{equation}\label{thm315}
	X:=\sup_{k\ge 1}\|x_k\|<\infty.
	\end{equation}
	By Theorem V.9.3 in \cite{Woyczynski}, (i) holds if we show that
	\begin{equation}\label{thm316}
	\lim_{n\to\infty}\dfrac{1}{n^{1/p}}\sum_{k=1}^{n}x_k\varepsilon_k=0 \text{ a.s.}
	\end{equation}
	Set
	\[X_k=x_k\varepsilon_k, k\ge 1.\]
	Then $\left\{X_k,  k \ge 1\right\}$ is a sequence of independent symmetric $\mathbf{B}$-valued random variables, stochastically dominated by $X$.
	Since $X$ is bounded, \eqref{thm311} holds. Therefore, by (ii), we have
	\begin{equation}\label{thm317}
	\sum_{n=1}^{\infty}\dfrac{1}{n}\left(\dfrac{\|\sum_{k=1}^{n}X_k\|}{n^{1/p}}\right)^q<\infty\ \text{ a.s.}
	\end{equation}
	By applying Proposition \ref{pqSLLN-implies-SLLN}, we obtain
	\eqref{thm316}.
\end{proof}

Now, we consider the case where $q\ge 1$ and $1\le p<2$. Li, Qi, and Rosalsky \cite{LQR16} provided set of necessary and sufficient conditions for the $(p,q)$-SLLN. 
Theorem 2.2 of Li, Qi, and Rosalsky \cite{LQR16} is as follows.
\begin{proposition}
[Theorem 2.2 of \cite{LQR16}]\label{LQR16Thm22} Let $1 < p < 2$, $q \ge 1$, and let $\left\{X, X_n, n \ge 1\right\}$ be a sequence of i.i.d. random variables taking values in a
	real separable stable type $p$ Banach space $\mathbf{B}$. Then $X\in \SLLN(p,q)$ if and only if 
	$\mathbb{E}(X)=0$ and
	\begin{equation}\label{stable21}
	\begin{cases}
	\int_{0}^{\infty}\mathbb{P}^{q/p}(\|X\|^q>t)\mathrm{d} t<\infty&\text{ if }\  q<p,\\[1.5mm]
	\mathbb{E}(\|X\|^p)<\infty, \sum_{n=1}^{\infty}\dfrac{\int_{\min\left\{u_n^p, n\right\}}^{n}\mathbb{P}(\|X\|^p>t)\mathrm{d} t}{n}<\infty&\text{ if }\ q=p,\\[1.5mm]
	\mathbb{E}(\|X\|^p)<\infty&\text{ if }\   q>p.
	\end{cases}
	\end{equation}
\end{proposition}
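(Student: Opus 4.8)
The plan is to treat the three regimes $q<p$, $q=p$ and $q>p$ separately; the value $q=p$ is where the novelty (the window series) resides. For $q<p$ the result is essentially Proposition~\ref{prop12}: there \eqref{m04} and \eqref{th13} are equivalent for $q<p<2$, so $X\in\SLLN(p,q)$ forces \eqref{th14} and $\int_0^\infty\mathbb P^{q/p}(\|X\|^q>t)\,\mathrm dt<\infty$, while the converse in a stable type $p$ space is obtained by the method behind Proposition~\ref{theorem11}. In all three cases half of the ``only if'' direction is free: $X\in\SLLN(p,q)$ implies the Marcinkiewicz--Zygmund SLLN \eqref{th14} by Proposition~\ref{prop12}, and since stable type $p$ implies Rademacher type $p$, de~Acosta's theorem \cite{Acosta81} yields $\mathbb E(X)=0$ and $\mathbb E(\|X\|^p)<\infty$; this finishes ``only if'' when $q>p$. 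For $q=p$ one must still produce the series $\sum_n n^{-1}\int_{\min\{u_n^p,n\}}^n\mathbb P(\|X\|^p>t)\,\mathrm dt<\infty$, and this is the crux of the necessity part.

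For sufficiency with $q\ge p$ (so $q\ge1$) I would use the truncations of Lemma~\ref{LQR01} and write $S_n=U_{n,n}^{(1)}+U_{n,n}^{(2)}+\sum_{i\le n}X_i\mathbf{1}(\|X_i\|^p>n)$. The last term is eventually $0$ a.s.: since $\sum_i\mathbb P(\|X\|^p>i)\le\mathbb E\|X\|^p<\infty$, the Borel--Cantelli lemma gives $\|X_i\|^p\le i$ for all large $i$ a.s., so $\{i\le n:\|X_i\|^p>n\}$ is eventually a fixed finite set and the indicators then vanish for $n$ large; hence its contribution to \eqref{th13} is a finite sum. The centering satisfies $\mathbb E U_{n,n}^{(1)}+\mathbb E U_{n,n}^{(2)}=-\,n\,\mathbb E X\mathbf{1}(\|X\|^p>n)$, whose norm over $n^{1/p}$ is at most $n^{1-1/p}\mathbb E\|X\|\mathbf{1}(\|X\|^p>n)=n\mathbb P(\|X\|^p>n)+n^{1-1/p}\int_{n^{1/p}}^\infty\mathbb P(\|X\|>u)\,\mathrm du$; both summands tend to $0$ with $\sum_n(\cdot)/n$ finite (for the second, by Fubini $\sum_n(\cdot)/n\asymp\mathbb E\|X\|^p$), so, by the elementary fact that a nonnegative sequence $(u_n)$ with $u_n\to0$ and $\sum_n u_n/n<\infty$ satisfies $\sum_n u_n^q/n<\infty$ for every $q\ge1$, the centering contributes a convergent series. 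The block $U_{n,n}^{(1)}-\mathbb E U_{n,n}^{(1)}$ is handled by Lemma~\ref{LQR01}(ii)/(iv) (which uses that stable type $p$ yields stable type $r$ for some $r\in(p,2)$, and for $q\ge2$ one first reduces to $q$ near $p$ via Remark~\ref{rem01}), giving $\sum_n n^{-1-q/p}\mathbb E\|U_{n,n}^{(1)}-\mathbb E U_{n,n}^{(1)}\|^q<\infty$ and hence a.s.\ convergence by Markov's inequality and Tonelli's theorem. Finally $U_{n,n}^{(2)}-\mathbb E U_{n,n}^{(2)}$ is a sum of i.i.d.\ summands bounded by $n^{1/p}$ with at most $\sim n\mathbb P(u_n^p<\|X\|^p\le n)\le1$ of them nonzero in mean; a moment estimate (Rademacher type $p$ suffices when $q=p$, a stable type $r$ estimate as in Lemma~\ref{Rosalsky02} when $q>p$) bounds $\mathbb E\|U_{n,n}^{(2)}-\mathbb E U_{n,n}^{(2)}\|^q$ by a constant times $n\,\mathbb E\|X\|^q\mathbf{1}(u_n^p<\|X\|^p\le n)$, so that $\sum_n n^{-1-q/p}\mathbb E\|U_{n,n}^{(2)}-\mathbb E U_{n,n}^{(2)}\|^q$ is controlled by $\sum_n n^{-1}\int_{\min\{u_n^p,n\}}^n\mathbb P(\|X\|^p>t)\,\mathrm dt$ when $q=p$ (the two being equivalent up to boundary terms of size $\asymp\mathbb E\|X\|^p$) and by $\mathbb E\|X\|^p<\infty$ alone when $q>p$; in the latter case one must keep $\mathbb E\|X\|^q\mathbf{1}(u_n^p<\|X\|^p\le n)$ rather than crudely replacing it, which is the delicate step there. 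Combining the pieces gives \eqref{th13}.

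The hardest point is the necessity of the window series when $q=p$. One cannot pass to expectations, since $\SLLN(p,p)$ is strictly weaker than $\sum_n n^{-1}\mathbb E(\|S_n\|/n^{1/p})^p<\infty$ (equivalent to $\mathbb E(\|X\|^p\ln(1+\|X\|))<\infty$). I would argue by contraposition: after symmetrization (harmless for both $\mathbb E\|X\|^p<\infty$ and, up to constants, the window series, via the Lévy inequalities), suppose $\sum_n n^{-1}\int_{\min\{u_n^p,n\}}^n\mathbb P(\|X\|^p>t)\,\mathrm dt=\infty$; the goal is $\sum_n n^{-1}(\|S_n\|/n^{1/p})^p=\infty$ a.s. For symmetric $X$, Lévy's inequality gives $\mathbb P(\|S_n\|>t)\ge\tfrac12\bigl(1-(1-\mathbb P(\|X\|>t))^n\bigr)$, whence $\mathbb E(\|S_n\|/n^{1/p})^p\ge\tfrac12\int_0^\infty\bigl(1-(1-\mathbb P(\|X\|^p>nu))^n\bigr)\,\mathrm du$; after a change of variables and subtracting the part from $t\le\min\{u_n^p,n\}$ (which sums against $1/n$ because $\sum_n u_n^p/n^2\asymp\mathbb E\|X\|^p<\infty$), the right side is comparable to $n^{-1}\int_{\min\{u_n^p,n\}}^n\mathbb P(\|X\|^p>t)\,\mathrm dt$. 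To upgrade this ``in mean along a block'' lower bound to almost sure divergence I would restrict to the dyadic times $n=2^k$, exploit the independence of $S_{2^k}-S_{2^{k-1}}$, a Paley--Zygmund-type inequality bounding $\mathbb P\bigl(\|S_{2^k}-S_{2^{k-1}}\|>c\,2^{k/p}\bigr)$ from below in terms of $\mathbb E\bigl(\|S_{2^k}-S_{2^{k-1}}\|/2^{k/p}\bigr)^p$, the second Borel--Cantelli lemma, and $\|S_{2^k}\|\vee\|S_{2^{k-1}}\|\ge\tfrac12\|S_{2^k}-S_{2^{k-1}}\|$ to return to the partial sums. I expect this dyadic Paley--Zygmund/Borel--Cantelli argument to be the principal obstacle, since it must extract an honest almost sure lower bound from a quantile-dependent quantity while controlling the dependence between consecutive partial sums; the remaining steps (identifying the $\int\mathbb P^{q/p}$ and $\mathbb E\|X\|^p$ conditions, and tracking boundary terms) are routine.
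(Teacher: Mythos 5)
The proposition you were asked about is quoted verbatim from Li, Qi, and Rosalsky (Theorem 2.2 of \cite{LQR16}); the present paper gives no proof of it, so the only in-paper benchmark is the analogous argument for $0<q=p<1$ (the necessity half of Theorem \ref{thm14}) together with the machinery of Lemmas \ref{LQR01}, \ref{lem24}, \ref{lem25}. Your treatment of the easy parts is consistent with that route: the $q<p$ case via Proposition \ref{prop12}, the extraction of $\E(X)=0$ and $\E(\|X\|^p)<\infty$ from \eqref{th14} via de Acosta, and a sufficiency decomposition $S_n=U^{(1)}_{n,n}+U^{(2)}_{n,n}+\sum_{i\le n}X_i\mathbf{1}(\|X_i\|^p>n)$ in the spirit of \cite{LQR16}. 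The genuine gap is exactly where you place the crux: the necessity of the window series $\sum_n n^{-1}\int_{\min\{u_n^p,n\}}^{n}\P(\|X\|^p>t)\,\mathrm{d}t<\infty$ when $q=p$, for which you only offer a speculative contraposition.

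That sketched contraposition cannot work as stated. In that regime you may assume $\E(\|X\|^p)<\infty$ and $\E(X)=0$ (otherwise you are done by Proposition \ref{prop12} and de Acosta), and then, since stable type $p$ implies Rademacher type $p$, the Marcinkiewicz--Zygmund SLLN \eqref{th14} holds; hence $\|S_{2^k}-S_{2^{k-1}}\|/2^{k/p}\to 0$ a.s., and because the dyadic block increments are independent, the second Borel--Cantelli lemma forces $\sum_k\P\bigl(\|S_{2^k}-S_{2^{k-1}}\|>c\,2^{k/p}\bigr)<\infty$ for every fixed $c>0$. So no Paley--Zygmund estimate can make that sum diverge, and your plan to invoke the second Borel--Cantelli lemma at a fixed threshold is self-defeating; moreover, even if such exceedances occurred infinitely often, a single index $n\asymp 2^k$ contributes only $O(2^{-k})$ to $\sum_n n^{-1}(\|S_n\|/n^{1/p})^p$, so i.o. exceedances along dyadics do not yield divergence --- one would need lower bounds on a positive proportion of each block. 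The missing idea is that one \emph{can} pass to expectations after truncation, which defuses your stated objection: as in the paper's necessity proof of Theorem \ref{thm14} (and in \cite{LQR16}), truncate at $k^{1/p}$, respectively $n^{1/p}$, convert the a.s. statement \eqref{th13} into $\sum_n n^{-2}\E(\|U_{n,n}\|^p)<\infty$ using the a.s.-to-mean results of \cite{LQR15} (inequality (11) of Theorem 7, Lemma 4) and the argument for (3.3) in \cite{LQR16}, subtract the $U^{(1)}_{n,n}$ block by Lemma \ref{LQR01} (this is where stable type $p$ enters), then symmetrize and combine L\'evy's inequality with Lemmas \ref{lem24} and \ref{lem25} to obtain $\sum_n n^{-1}\E\bigl(\|X\|^p\mathbf{1}(u_n^p<\|X\|^p\le n)\bigr)<\infty$, which is equivalent (given $\E\|X\|^p<\infty$) to the window series. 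Without this step your proof of the $q=p$ necessity is not merely incomplete but aimed in an unworkable direction; the sufficiency outline, while plausible, also leaves the $q>p$ moment estimate for $U^{(2)}_{n,n}$ (which you call delicate) unverified.
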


Similar to Theorem \ref{thm:stable11}, the following theorem is a complement of Proposition \ref{LQR16Thm22} (i.e., Theorem 2.2 of Li, Qi, and Rosalsky \cite{LQR16}).
\begin{theorem}\label{thm:stable23} Let $1 < p < 2$, $q\ge 1$, and let $\mathbf{B}$ be a real separable Banach space. Then the following statements are	equivalent.
	\begin{itemize}
		\item[(i)] $\mathbf{B}$ is of stable type $p$.
		\item[(ii)] For every sequence $\left\{X_n, n\ge 1\right\}$ of independent mean zero $\mathbf{B}$-valued random variables which is stochastically dominated
		by a random variable $X$, condition
		\eqref{stable21} implies
		\[\sum_{n=1}^{\infty}\dfrac{1}{n}\left(\dfrac{\|S_n\|}{n^{1/p}}\right)^q<\infty \text{ a.s.}\]	
	\end{itemize}
\end{theorem}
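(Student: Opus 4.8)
The plan is to follow the template established in the proof of Theorem~\ref{thm:stable11}, substituting the $q\ge 1$ analogues of the auxiliary estimates for the ones used there when $0<q<1$. For the implication (i)$\Rightarrow$(ii), fix a sequence $\{X_n,n\ge 1\}$ as in (ii) and assume \eqref{stable21}. As in the proof of Theorem~\ref{thm:stable11}, truncate at the quantile by setting $Y_{n,k}=X_k\mathbf{1}(\|X_k\|^p\le u_n)$ and $U_{n,n}^{(1)}=\sum_{k=1}^n Y_{n,k}$, and, using $\E(S_n)=0$, write
\[
S_n=\bigl(U_{n,n}^{(1)}-\E U_{n,n}^{(1)}\bigr)+\Bigl((S_n-U_{n,n}^{(1)})-\E(S_n-U_{n,n}^{(1)})\Bigr),
\]
so that by the $c_r$-inequality it suffices to show that each of $\sum_n n^{-1-q/p}\E\|U_{n,n}^{(1)}-\E U_{n,n}^{(1)}\|^q$ and $\sum_n n^{-1-q/p}\E\|(S_n-U_{n,n}^{(1)})-\E(S_n-U_{n,n}^{(1)})\|^q$ is finite. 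When $1\le q\le p$ the first series is finite by Lemma~\ref{LQR01}(ii); when $q>p$ it is finite by the argument used for the corresponding part of the proof of Theorem~2.2 in Li, Qi, and Rosalsky~\cite{LQR16} (their Lemma~3.4), which rests only on the stochastic domination of $\{X_n\}$ by $X$ and hence carries over here with the same routine modifications indicated in the proof of Lemma~\ref{LQR01}. The second series is finite by the stochastically dominated version of Lemma~3.3 of~\cite{LQR16}, whose proof is unchanged. Adding the two bounds gives $\sum_n n^{-1-q/p}\E\|S_n\|^q<\infty$, and Tonelli's theorem then yields $\sum_{n\ge 1}n^{-1}\bigl(\|S_n\|/n^{1/p}\bigr)^q<\infty$ a.s., which is the conclusion of (ii).

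For the implication (ii)$\Rightarrow$(i) I would test (ii) on Rademacher sums, exactly as in Theorem~\ref{thm:stable11}. Let $\{\varepsilon_k,k\ge 1\}$ be a Rademacher sequence and $\{x_k,k\ge 1\}\subset\mathbf{B}$ with $X:=\sup_{k\ge 1}\|x_k\|<\infty$, and set $X_k=x_k\varepsilon_k$. Then $\{X_k,k\ge 1\}$ is a sequence of independent symmetric mean zero $\mathbf{B}$-valued random variables stochastically dominated by $X$; since $X$ is bounded, each of the three conditions in \eqref{stable21} holds trivially. Hypothesis (ii) then gives $\sum_{n\ge 1}n^{-1}\bigl(\|\sum_{k=1}^n X_k\|/n^{1/p}\bigr)^q<\infty$ a.s., and since $1\le p<2$ and $q\ge 1$, Proposition~\ref{pqSLLN-implies-SLLN} applies (no symmetry hypothesis is needed when $q\ge 1$, though it holds here in any case) and yields $n^{-1/p}\sum_{k=1}^n x_k\varepsilon_k\to 0$ a.s. By Theorem~V.9.3 of~\cite{Woyczynski}, this forces $\mathbf{B}$ to be of stable type $p$, completing the proof.

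I expect the main obstacle to lie in the (i)$\Rightarrow$(ii) direction rather than in (ii)$\Rightarrow$(i), which is essentially bookkeeping once Proposition~\ref{pqSLLN-implies-SLLN} is available. Concretely, one must verify that the i.i.d. estimates behind Theorem~2.2 of~\cite{LQR16} — namely their Lemmas~3.3 and~3.4 together with the three-way split over $q<p$, $q=p$, and $q>p$ in \eqref{stable21} — survive the replacement of a stationary sequence by a merely stochastically dominated, non-stationary one. Lemma~\ref{LQR01} already handles the truncated block $U_{n,n}^{(1)}$ when $q\le p$, so the residual work concentrates on the tail block $S_n-U_{n,n}^{(1)}$, where stationarity is no longer available and the requisite bound on $\sum_n n^{-1-q/p}\E\|(S_n-U_{n,n}^{(1)})-\E(S_n-U_{n,n}^{(1)})\|^q$ must be drawn directly from the domination inequality \eqref{stoch-dominated} and the stable type $p$ property, and on the case $q>p$, where the moment power exceeds $p$ and Lemma~\ref{LQR01} does not apply verbatim.
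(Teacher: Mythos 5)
Your proposal is correct and follows essentially the same route as the paper: the paper likewise proves (ii)$\Rightarrow$(i) verbatim as in Theorem \ref{thm:stable11} (Rademacher sums, Proposition \ref{pqSLLN-implies-SLLN}, and Theorem V.9.3 of \cite{Woyczynski}), and disposes of (i)$\Rightarrow$(ii) by adapting the sufficiency proof of Theorem 2.2 of \cite{LQR16} (their Lemmas 3.3 and 3.4) to stochastically dominated, non-identically distributed sequences, exactly the adaptation you sketch. Your write-up is in fact more explicit about the decomposition and the case split than the paper, which omits these details.
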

\begin{proof} 
		The proof of the implication (ii)$\Rightarrow $(i) is exactly the same as that of Theorem \ref{thm:stable11}. 
		The proof of the implication (i)$\Rightarrow $(ii)
is similar to that of the sufficient part of Theorem 2.2 of Li,
	Qi and Rosalsky \cite{LQR16} with some simple changes. 
	We omit the details.
\end{proof}

Similarly, we have the following theorem for the case where $p=1$. It is a complement of Theorem 2.3 of Li, Qi, and Rosalsky \cite{LQR16}.

\begin{theorem}\label{thm:stable24} Let $q\ge 1$, and let $\mathbf{B}$ be a real separable Banach space. Then the following statements are	equivalent.
	\begin{itemize}
		\item[(i)] $\mathbf{B}$ is of stable type $1$.
		\item[(ii)] For every sequence $\left\{X_n, n\ge 1\right\}$ of independent mean zero $\mathbf{B}$-valued random variables which is stochastically dominated
		by a random variable $X$, the conditions 
	\[\mathbb{E}(\|X\|)<\infty,\ \sum_{n=1}^\infty \dfrac{1}{n^2}\left(\sum_{i=1}^n \left\|\E(X_i\mathbf{1}(\|X_i\|\le n))\right\|^q\right)<\infty,\]
	 and
\begin{equation*}
 \sum_{n=1}^{\infty}\dfrac{\mathbf{1}(\{q=1\})\int_{\min\left\{u_n, n\right\}}^{n}\mathbb{P}(\|X\|>t)\mathrm{d} t}{n}<\infty
 \end{equation*}
 implies
\begin{equation}\label{stable22}
\sum_{n=1}^{\infty}\dfrac{1}{n}\left(\dfrac{\|S_n\|}{n}\right)^q<\infty\ \text{ a.s.}
\end{equation}
		\item[(iii)] For every sequence $\left\{X_n, n\ge 1\right\}$ of independent symmetric $\mathbf{B}$-valued random variables which is stochastically dominated
		by a random variable $X$, the conditions $\mathbb{E}(\|X\|)<\infty$ and
		\begin{equation*}
		\sum_{n=1}^{\infty}\dfrac{\mathbf{1}(\{q=1\})\int_{\min\left\{u_n, n\right\}}^{n}\mathbb{P}(\|X\|>t)\mathrm{d} t}{n}<\infty
		\end{equation*}
		implies \eqref{stable22}.
	\end{itemize}
\end{theorem}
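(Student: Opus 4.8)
The plan is to prove the cycle of implications (i)~$\Rightarrow$~(ii)~$\Rightarrow$~(iii)~$\Rightarrow$~(i). Of these, (ii)~$\Rightarrow$~(iii) is immediate: if $\{X_n,n\ge1\}$ is a sequence of independent \emph{symmetric} $\mathbf{B}$-valued random variables stochastically dominated by $X$ with $\E(\|X\|)<\infty$, then each $X_n$ is integrable with mean zero, and for every $i$ and $n$ the event $\{\|X_i\|\le n\}$ is invariant under $X_i\mapsto -X_i$, so $\E(X_i\mathbf{1}(\|X_i\|\le n))=0$; hence the centering series $\sum_n n^{-2}\sum_{i=1}^{n}\|\E(X_i\mathbf{1}(\|X_i\|\le n))\|^q$ vanishes and the hypotheses of (iii) are a special case of those of (ii). Thus (iii) follows at once from (ii).

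For (iii)~$\Rightarrow$~(i) I would argue exactly as in the proof of the implication (ii)~$\Rightarrow$~(i) of Theorem~\ref{thm:stable11}. Let $\{\varepsilon_k,k\ge1\}$ be a Rademacher sequence, let $\{x_k,k\ge1\}\subset\mathbf{B}$ satisfy $M:=\sup_{k\ge1}\|x_k\|<\infty$, and set $X_k=x_k\varepsilon_k$. Then $\{X_k,k\ge1\}$ is a sequence of independent symmetric $\mathbf{B}$-valued random variables stochastically dominated by the degenerate random variable $X\equiv M$, for which $\E(\|X\|)=M<\infty$; moreover $\|X\|$ has quantiles $u_n=M$ for all $n$, so $\int_{\min\{u_n,n\}}^{n}\mathbb{P}(\|X\|>t)\,\mathrm{d}t=0$ for every $n$ and the remaining hypothesis of (iii) holds trivially. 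Hence (iii) yields $\sum_{n=1}^{\infty}n^{-1}\Bigl(n^{-1}\bigl\|\sum_{k=1}^{n}x_k\varepsilon_k\bigr\|\Bigr)^{q}<\infty$ a.s.; since $\{X_k\}$ is independent, mean zero, symmetric, and stochastically dominated by $X$ with $\E(\|X\|)<\infty$, Proposition~\ref{pqSLLN-implies-SLLN} applied with $p=1$ gives $n^{-1}\sum_{k=1}^{n}x_k\varepsilon_k\to0$ a.s. Since $\{x_k\}$ was an arbitrary bounded sequence in $\mathbf{B}$, Theorem~V.9.3 in \cite{Woyczynski} shows that $\mathbf{B}$ is of stable type~$1$, which is (i).

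The substantive step is (i)~$\Rightarrow$~(ii), which I would obtain by adapting the proof of the sufficiency part of Theorem~2.3 of Li, Qi, and Rosalsky \cite{LQR16} (the i.i.d.\ case $p=1$, $q\ge1$), replacing identical distribution by stochastic domination. The strategy is to establish $\sum_{n=1}^{\infty}\E(\|S_n\|^{q})/n^{1+q}<\infty$, which, the summand being non-negative, forces \eqref{stable22}. Since $\E(\|X\|)<\infty$, stochastic domination gives $\sum_n\mathbb{P}(\|X_n\|>n)\le\sum_n\mathbb{P}(\|X\|>n)<\infty$, so by the Borel--Cantelli lemma the part $\sum_{i\le n}X_i\mathbf{1}(\|X_i\|>n)$ is a.s.\ eventually zero, and it suffices to treat $U_{n,n}=\sum_{i=1}^{n}X_i\mathbf{1}(\|X_i\|\le n)$. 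I would decompose $U_{n,n}=(U_{n,n}^{(1)}-\E U_{n,n}^{(1)})+(U_{n,n}^{(2)}-\E U_{n,n}^{(2)})+\E U_{n,n}$, where $U_{n,n}^{(1)}=\sum_{i=1}^{n}X_i\mathbf{1}(\|X_i\|\le u_n)$, $U_{n,n}^{(2)}=U_{n,n}-U_{n,n}^{(1)}$, and $u_n\le n$ for all large $n$ (because $n\mathbb{P}(\|X\|>n)\to0$), and bound the three contributions to $\sum_n\E(\|U_{n,n}\|^{q})/n^{1+q}$ in turn. The $U_{n,n}^{(1)}$-contribution is finite by Lemma~\ref{LQR01}(ii) with $q=p=1$, and when $q>p=1$ by a Hoffmann-J\o rgensen splitting combined with Lemma~\ref{LQR01}(iv) (which with $p=1$ holds whenever $\mathbf{B}$ is of stable type~$1$ and $\E(\|X\|)<\infty$) and the moment bound $\E(\|X\|^{q}\mathbf{1}(\|X\|\le u_n))\le q\int_0^{u_n}t^{q-1}\mathbb{P}(\|X\|>t)\,\mathrm{d}t$; this is the step that invokes the stable type~$1$ hypothesis (i) and uses that $\E(\|X\|)<\infty$ makes \eqref{th27} hold for $p=1$. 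The term $\E U_{n,n}=\sum_{i=1}^{n}\E(X_i\mathbf{1}(\|X_i\|\le n))$ contributes, using $q\ge1$ and $\|\sum_{i=1}^{n}v_i\|^{q}\le n^{q-1}\sum_{i=1}^{n}\|v_i\|^{q}$,
\[
\sum_{n=1}^{\infty}\frac{1}{n^{1+q}}\Bigl\|\sum_{i=1}^{n}\E(X_i\mathbf{1}(\|X_i\|\le n))\Bigr\|^{q}\le\sum_{n=1}^{\infty}\frac{1}{n^{2}}\sum_{i=1}^{n}\bigl\|\E(X_i\mathbf{1}(\|X_i\|\le n))\bigr\|^{q}<\infty,
\]
which is precisely the second hypothesis of (ii). Finally the middle piece $U_{n,n}^{(2)}=\sum_{i=1}^{n}X_i\mathbf{1}(u_n<\|X_i\|\le n)$ is handled, when $q=1$, by dominating $\sum_n n^{-2}\E\|U_{n,n}^{(2)}-\E U_{n,n}^{(2)}\|$, up to a constant, by $\sum_n n^{-1}u_n\mathbb{P}(\|X\|>u_n)+\sum_n n^{-1}\int_{\min\{u_n,n\}}^{n}\mathbb{P}(\|X\|>t)\,\mathrm{d}t$, the first sum being dominated by $\sum_n u_n/n^{2}$, which is finite since $\E(\|X\|)<\infty$, and the second being the third hypothesis of (ii); when $q>1$ the analogous estimate of the middle piece uses only $\E(\|X\|)<\infty$ together with $\mathbb{P}(\|X\|>u_n)\le1/n$, so that for $q>p$ no integral hypothesis enters, consistently with Proposition~\ref{LQR16Thm22}. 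Summing the three bounds gives $\sum_n\E(\|S_n\|^{q})/n^{1+q}<\infty$, which establishes (i)~$\Rightarrow$~(ii).

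I expect the main obstacle to be the bookkeeping in (i)~$\Rightarrow$~(ii): checking that the truncation estimates survive the passage from identical distribution to stochastic domination (which is largely automatic, since Lemma~\ref{LQR01} and Lemmas~\ref{Rosalsky02}, \ref{Rosalsky01} are already stated for general stochastically dominated independent sequences), that the centering which is automatic in the i.i.d.\ case is exactly provided by the second hypothesis of (ii), and that the two ranges $q=1=p$ and $q>1=p$ each consume precisely their intended hypothesis. The implications (ii)~$\Rightarrow$~(iii) and (iii)~$\Rightarrow$~(i) are routine given Proposition~\ref{pqSLLN-implies-SLLN}, Theorem~\ref{thm:stable11}, and the results cited above.
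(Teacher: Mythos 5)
Your proposal follows essentially the same route as the paper, which omits the argument and refers to it implicitly: (iii)/(ii)$\Rightarrow$(i) is done exactly as in Theorem \ref{thm:stable11} (Rademacher sequence $X_k=x_k\varepsilon_k$, Proposition \ref{pqSLLN-implies-SLLN} with $p=1$, and Theorem V.9.3 of Woyczy\'{n}ski), while (i)$\Rightarrow$(ii) is the sufficiency proof of Theorem 2.3 of Li, Qi, and Rosalsky adapted from i.i.d.\ to stochastic domination, with the centering hypothesis of (ii) replacing the automatic centering and Lemma \ref{LQR01} (via Lemma \ref{Rosalsky02}) supplying the $U_{n,n}^{(1)}$ estimate, which is precisely your decomposition. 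One cosmetic correction: your concluding claim should be $\sum_{n}\E\left(\|U_{n,n}\|^{q}\right)/n^{1+q}<\infty$ rather than $\sum_{n}\E\left(\|S_n\|^{q}\right)/n^{1+q}<\infty$ (the latter may be infinite when $q>1$ and $\E(\|X\|^q)=\infty$), with \eqref{stable22} then following from the Borel--Cantelli step you already describe.
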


\section {Proof of the main results}\label{sec:main-results}

In this section, we will prove Theorems \ref{thm14} and \ref{thm15}.
The following lemma is proved by Li, Qi, and Rosalsky \cite[Lemma 5.4]{LQR11} for the case $K=1$. 
Its proof is similar to that of Lemma 5.4 of Li, Qi, and Rosalsky \cite{LQR11}.
\begin{lemma}\label{lem24} Let $Y_1, ..., Y_n$ be i.i.d. nonnegative real-valued random variables such that
	\begin{equation}\label{r01}
	\mathbb{P}(Y_1>0)\le \dfrac{K}{n}\  \text{ for some constant }\ K\ge 1.
	\end{equation}
	Then
	\begin{equation}\label{r02}
	\mathbb{E}\left(\max_{1\le k\le n} Y_k\right)\ge \dfrac{n}{2K}\mathbb{E}(Y_1).
	\end{equation}
\end{lemma}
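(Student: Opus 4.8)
The plan is to pass to the tail‑integral representations of both expectations and thereby reduce the whole statement to a single elementary one‑variable inequality. Write $g(t)=\mathbb{P}(Y_1>t)$ for $t\ge 0$; this is a nonincreasing function and, by hypothesis, $0\le g(t)\le g(0)=\mathbb{P}(Y_1>0)\le K/n$ for every $t\ge 0$. Since the $Y_k$ are nonnegative and i.i.d.,
\[
\mathbb{E}\Big(\max_{1\le k\le n}Y_k\Big)=\int_0^\infty\mathbb{P}\Big(\max_{1\le k\le n}Y_k>t\Big)\,\mathrm{d}t=\int_0^\infty\big(1-(1-g(t))^n\big)\,\mathrm{d}t,
\]
while $\mathbb{E}(Y_1)=\int_0^\infty g(t)\,\mathrm{d}t$. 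Hence it suffices to establish the pointwise bound
\[
1-(1-x)^n\ \ge\ \frac{n}{2K}\,x\qquad\text{for all }x\in[0,b],\quad b:=\min\{K/n,\,1\},
\]
and then integrate it over $t\ge 0$ with $x=g(t)$; this immediately yields \eqref{r02}. (The argument needs no integrability assumption: if $\mathbb{E}(Y_1)=\infty$ the right‑hand side of \eqref{r02} is infinite and $\mathbb{E}(\max_k Y_k)\ge\mathbb{E}(Y_1)=\infty$ trivially.)

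For the elementary inequality I would argue by concavity. Put $h(x)=1-(1-x)^n$; then $h$ is concave on $[0,1]$ with $h(0)=0$, so its graph lies above the chord joining $(0,0)$ to $(b,h(b))$, giving $h(x)\ge (x/b)\,h(b)$ for $x\in[0,b]$. It remains to check $h(b)/b\ge n/(2K)$. If $K\ge n$ then $b=1$, $h(b)/b=1\ge n/(2K)$ since $n/(2K)\le 1/2$. If $K<n$ then $b=K/n$ and, using $(1-K/n)^n\le \mathrm{e}^{-K}\le \mathrm{e}^{-1}<1/2$, we get $h(b)=1-(1-K/n)^n>1/2$, whence $h(b)/b=(n/K)\,h(b)>n/(2K)$. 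This proves the claim, and with it the lemma.

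There is no genuine obstacle here; the only point needing a little care is the case split in the elementary inequality (equivalently, separating $p:=\mathbb{P}(Y_1>0)\le 1/n$ from $1/n<p\le K/n$), and the comfortable cushion $1-\mathrm{e}^{-1}>1/2$ is exactly what makes the stated constant $n/(2K)$ come out without effort. An alternative route, closer to the argument for $K=1$, is to condition on the number $N=\sum_{k=1}^n\mathbf{1}(Y_k>0)$ of positive terms: on $\{N\ge1\}$ one has $\max_{k}Y_k\ge N^{-1}\sum_{k}Y_k$, and a short computation exploiting the i.i.d. structure together with the identity $\mathbb{E}\big((N'+1)^{-1}\big)=\big(1-(1-p)^n\big)/(np)$ for $N'\sim\mathrm{Bin}(n-1,p)$ gives $\mathbb{E}(\max_k Y_k)\ge\big(1-(1-p)^n\big)p^{-1}\mathbb{E}(Y_1)$; the same estimate on $\big(1-(1-p)^n\big)/p$ as above then finishes it. Finally I would note that the i.i.d. hypothesis is used only through the product formula $\mathbb{P}(\max_k Y_k\le t)=\prod_k\mathbb{P}(Y_k\le t)$, so the same proof gives a version for merely independent $Y_k$ with a common bound on $\mathbb{P}(Y_k>0)$.
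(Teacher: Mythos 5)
Your proof is correct and follows essentially the same route as the paper's: both use the product formula to get $\P\left(\max_{1\le k\le n}Y_k>t\right)=1-\left(1-\P(Y_1>t)\right)^n$, establish the pointwise bound $\P\left(\max_{1\le k\le n}Y_k>t\right)\ge \frac{n}{2K}\,\P(Y_1>t)$ for all $t\ge 0$, and integrate tails. The only difference is in verifying the elementary one-variable inequality: the paper passes through $1-(1-x)^n\ge 1-\mathrm{e}^{-nx}$ and then $1-\mathrm{e}^{-u}\ge u/(2K)$ on $[0,K]$, whereas you use concavity of $1-(1-x)^n$ with a chord argument and a case split $K\ge n$ versus $K<n$ --- a cosmetic variation, not a different method.
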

\begin{proof}
	For all $t\ge 0$, we have
	\begin{equation}\label{r03}
	\begin{split}
	\P\left(\max_{1\le k\le n}Y_k>t\right)=1-\left(1-\P\left(Y_1\le t\right)\right)^n\ge 1-{\mathrm{e}}^{-n\P(Y_1>t)}.
	\end{split}
	\end{equation}
	Elementary calculus shows that
	\[1-\mathrm{e}^{-x}\ge \dfrac{x}{2K}\ \text{ for all }\ 0\le x\le K. \]
	It thus follows from \eqref{r01} and \eqref{r03} that
	\[\P\left(\max_{1\le k\le n}Y_k>t\right)\ge \dfrac{n}{2K}\P(Y_1>t)\ \text{ for all }\ t\ge 0, \]
	which implies \eqref{r02}.
\end{proof}

The following lemma is a special case of Lemma 3.2 of Li and Rosalsky \cite{LiRosalsky13}. This
useful result will be used in our symmetrization procedure.

\begin{lemma}\label{lem25} Let $g: \mathbf{B}\rightarrow [0,\infty]$ be a measurable even function such that for all $x, y \in\mathbf{B},$
	\[g(x+y)\le\beta (g(x)+g(y)),\]
	where $\beta\ge 1$ is a constant, depending only on the function $g$. If $V$ is a $\mathbf{B}$-valued random variable and $\hat{V}$ is a
	symmetrized version of $V$ , then for all $t\ge 0$, we have that
	\[\mathbb{P}(g(V)\le t)\mathbb{E}(g(V))\le\mathbb{E}(g(\hat{V}))+\beta t.\]
\end{lemma}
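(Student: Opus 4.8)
The plan is to make the symmetrized version explicit and run the classical symmetrization estimate, in the spirit of the proof of Lemma 3.2 of Li and Rosalsky \cite{LiRosalsky13}. I would take $\hat{V}=V-V'$, where $V'$ is an independent copy of $V$; since $V-V'$ has a symmetric distribution and $g$ is even, this is a legitimate choice of symmetrized version, and $g(\hat{V})$ is a well-defined $[0,\infty]$-valued random variable. Fixing $t$, I may assume $t<\infty$ and $\mathbb{P}(g(V)\le t)>0$, the complementary cases being trivial. The mechanism is to insert a copy of $g(V)$ inside $\mathbb{E}(g(\hat{V}))$ on the event where $g(V')$ is dominated by $t$, and then to factorize using independence of $V$ and $V'$.

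Concretely, the first step is the pointwise bound coming from $V=(V-V')+V'$, evenness of $g$, and the hypothesis $g(x+y)\le\beta(g(x)+g(y))$, namely $g(V)\le\beta\bigl(g(\hat{V})+g(V')\bigr)$. On the event $\{g(V')\le t\}$ this reads $g(V)\le\beta\,g(\hat{V})+\beta t$, while off that event the left-hand side below vanishes and $g(\hat{V})\ge 0$; hence one has the pointwise inequality
\[
g(V)\,\mathbf{1}(g(V')\le t)\le\beta\,g(\hat{V})+\beta t\,\mathbf{1}(g(V')\le t).
\]
The second step is to take expectations, using independence of $V$ and $V'$ and the fact that $V'$ has the same distribution as $V$, so that
\[
\mathbb{E}\bigl(g(V)\,\mathbf{1}(g(V')\le t)\bigr)=\mathbb{P}(g(V)\le t)\,\mathbb{E}(g(V)),\qquad \mathbb{E}\bigl(\mathbf{1}(g(V')\le t)\bigr)=\mathbb{P}(g(V)\le t)\le 1;
\]
this yields $\mathbb{P}(g(V)\le t)\,\mathbb{E}(g(V))\le\beta\,\mathbb{E}(g(\hat{V}))+\beta t$. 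When $\beta=1$ this is precisely the assertion; for general $\beta\ge 1$ this slightly weaker inequality is the natural output of the symmetrization, and the sharper stated form, with $\beta$ multiplying only the term $t$, is exactly Lemma 3.2 of Li and Rosalsky \cite{LiRosalsky13}, which I would invoke to conclude. The case $\mathbb{E}(g(V))=\infty$ needs no separate treatment, since the same computation then forces $\mathbb{E}(g(\hat{V}))=\infty$.

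The step I expect to be the main obstacle is obtaining the constant in exactly the stated form: the naive symmetrization carries the factor $\beta$ onto $\mathbb{E}(g(\hat{V}))$ as well as onto $t$, and removing it from the $\mathbb{E}(g(\hat{V}))$ term is precisely the point at which one must lean on the more careful analysis in \cite{LiRosalsky13}. A secondary, purely technical matter is that $g$ is only assumed measurable, even, and $[0,\infty]$-valued rather than a genuine seminorm, so one should check that $g(V)$, $g(V')$, $g(\hat{V})$ are all measurable and that the arithmetic above makes sense in $[0,\infty]$; this is routine, because $(x,y)\mapsto g(x+y)$ is measurable and $V,V'$ are independent, so Tonelli's theorem applies throughout.
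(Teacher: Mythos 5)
Your proposal is essentially in order, but you should know that the paper does not prove this lemma at all: it is stated as a special case of Lemma 3.2 of Li and Rosalsky \cite{LiRosalsky13}, with no argument given, so your final appeal to that reference is literally the paper's entire justification. What you add beyond the paper is a correct, self-contained symmetrization proof of the slightly weaker bound $\mathbb{P}(g(V)\le t)\,\mathbb{E}(g(V))\le\beta\,\mathbb{E}(g(\hat{V}))+\beta t$: the pointwise step from $V=(V-V')+V'$, the factorization by independence (Tonelli handles nonnegativity and possible infinite values), and the choice $\hat{V}=V-V'$ are all fine, the last because only the law of $\hat{V}$ enters the conclusion. You are also right that removing $\beta$ from the $\mathbb{E}(g(\hat{V}))$ term is the genuine obstacle: for $\beta>1$ that sharper placement of the constant does not follow from quasi-subadditivity by this route, and neither you nor the paper derives it --- both rest on the cited lemma and whatever additional analysis its proof contains. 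It is worth noting, however, that in the only place the paper uses the lemma (the necessity half of Theorem \ref{thm14}, applied with $g(x)=\|x\|^p$, $0<p<1$, and $t=1/n$) one may take $\beta=1$, since $\|x+y\|^p\le\|x\|^p+\|y\|^p$; with $\beta=1$ your elementary argument already gives the stated inequality verbatim, so for the purposes of this paper your proof is complete without invoking \cite{LiRosalsky13} at all.
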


\begin{proof}[Proof of Theorem \ref{thm14} (Sufficiency)]
	
	Firstly we consider the case where $0<q<p<1$.
	We will prove that
	\begin{align}\label{pr01}
	\sum_{n=1}^\infty \dfrac{1}{n}\mathbb{E}\left(\dfrac{\|S_n\|}{n^{1/p}}\right)^q<\infty.
	\end{align}
	For each $n\ge 1$, let the quantile $u_n$ of order $1-1/n$  of $\|X\|$ be defined as in Section \ref{sec:intro}. For $n\ge 1$, $1\le k\le n$, set
	\[Y_{n, k}=X_{k}\mathbf{1}(\|X_k\|^p\leq u_n),\ Z_{n, k}=X_{k}\mathbf{1}(\|X_k\|^p\leq n),\] 
	and
	\[U_{n,k}=\sum_{i=1}^{k}Z_{n,i},\ U_{n,k}^{(1)}=\sum_{i=1}^{k}Y_{n,i},\  U_{n,k}^{(2)}= U_{n,k}- U_{n,k}^{(1)}.\]
	By \eqref{th28} in Lemma \ref{LQR01}, \eqref{pr01} holds if we can show that
	\begin{align}\label{pr03}
	\sum_{n=1}^\infty \dfrac{1}{n}\mathbb{E}\left(\dfrac{\|S_n-U_{n,n}^{(1)}\|}{n^{1/p}}\right)^q<\infty.
	\end{align}
	Since $0<q<1$, we have
	\begin{align}\label{pr05}
	\begin{split}
	\mathbb{E}\left(\|S_n-U_{n,n}^{(1)}\|^q\right)
	&\le n \mathbb{E}\left(\|X\|^q\mathbf{1}(\|X\|^q>{u_{n}^q})\right)\\
	& = n u_{n}^q \mathbb{P}(\|X\|^q>{u_{n}^q}) +n\int_{u_{n}^q}^{\infty} \mathbb{P}(\|X\|^q>t)\mathrm{d} t\\
	& \le u_{n}^q+n\int_{u_{n}^q}^{\infty} \mathbb{P}(\|X\|^q>t)\mathrm{d} t.
	\end{split}
	\end{align}
	Noting that for $n\ge 1$, $u_{n}^q$ is the quantile of order $1-1/n$ of $\|X\|^q$. Letting $u_0=0$, it thus follows from \eqref{pr05} that
	\begin{align*}
	\begin{split}
	&\sum_{n=1}^\infty \dfrac{1}{n}\mathbb{E}\left(\dfrac{\|S_n-U_{n,n}^{(1)}\|}{n^{1/p}}\right)^q\\
	&\le \sum_{n=1}^\infty \dfrac{u_{n}^q}{n^{1+q/p}}+ \sum_{n=1}^\infty \dfrac{1}{n^{q/p}}\int_{u_{n}^q}^{\infty} \mathbb{P}(\|X\|^q>t)\mathrm{d} t\\
	&\le \sum_{n=1}^\infty \dfrac{1}{n^{1+q/p}}\sum_{k=1}^n (u_{k}^q-u_{k-1}^q)
	+\sum_{n=1}^\infty \dfrac{1}{n^{q/p}}\sum_{k=n}^\infty \int_{u_{k}^q}^{u_{k+1}^q} \mathbb{P}(\|X\|^q>t)\mathrm{d} t\\
	&= \sum_{k=1}^\infty(u_{k}^q-u_{k-1}^q)\left(\sum_{n=k}^\infty  \dfrac{1}{n^{1+q/p}}\right)
	+\sum_{k=1}^\infty \int_{u_{k}^q}^{u_{k+1}^q} \mathbb{P}(\|X\|^q>t)\mathrm{d} t\left(\sum_{n=1}^k\dfrac{1}{n^{q/p}}\right)\\
	&\le \left(1+\dfrac{p}{q}\right)\sum_{k=1}^\infty\dfrac{1}{k^{q/p}}(u_{k}^q-u_{k-1}^q)
	+\dfrac{p}{p-q}\sum_{k=1}^\infty k^{1-q/p} \int_{u_{k}^q}^{u_{k+1}^q} \mathbb{P}(\|X\|^q>t)\mathrm{d} t\\
	&\le \left(1+\dfrac{p}{q}\right)\sum_{k=1}^\infty\int_{u_{k-1}^q}^{u_{k}^q}\mathbb{P}^{q/p}(\|X\|^q>t)\mathrm{d} t
	+\dfrac{p}{p-q}\sum_{k=1}^\infty\int_{u_{k}^q}^{u_{k+1}^q}\mathbb{P}^{q/p}(\|X\|^q>t)\mathrm{d} t\\
	& \le \left(1+\dfrac{p}{q}	+\dfrac{p}{p-q}\right) \int_{0}^\infty \mathbb{P}^{q/p}(\|X\|^q>t)\mathrm{d} t<\infty
	\end{split}
	\end{align*}
	thereby proving \eqref{pr03}.
	
	Now we consider the case where $0<q=p<1$. Since $\mathbb{E}(\|X\|^p)<\infty$, it is easy to see that
	\begin{equation}\label{pr07}
	\lim_{n\to\infty}\dfrac{u_n}{n^{1/p}}=0,
	\end{equation}
	and
	\begin{equation}\label{pr09}
	\sum_{n=1}^\infty \mathbb{P}(\|X_n\|^p>n)=\sum_{n=1}^\infty \mathbb{P}(\|X\|^p>n)<\infty.
	\end{equation}
	From \eqref{pr07}, we can assume that $u_n<n^{1/p}$ for all $n\ge 1$.
	We then write
	$$S_n=U_{n,n}^{(1)}+U_{n,n}^{(2)}+\sum_{k=1}^n X_k\mathbf{1}(\|X_k\|^p>n),\ n\ge 1.$$
	By the Borel-Cantelli lemma, it follows from
	\eqref{pr09} that
	\begin{equation*}
	\mathbb{P}(\|X_n\|^p> n\text{ i.o.}(n))=0
	\end{equation*}
	and hence
	\begin{equation*}
	\mathbb{P}\left(\max_{1\le k\le n}\|X_k\|^p> n\text{ i.o.}(n)\right)=0.
	\end{equation*}
	We thus have
	\begin{equation}\label{pr11}
	\sum_{n=1}^\infty \dfrac{1}{n}\left(\dfrac{\left\|\sum_{k=1}^n X_k\mathbf{1}(\|X_k\|^p>n)\right\|}{n^{1/p}}\right)^p<\infty \text{ a.s.}
	\end{equation}
	By using \eqref{pr11}, \eqref{th16} (with $0<q=p<1$) holds if we can show that
	\begin{align}\label{pr13}
	\sum_{n=1}^\infty \dfrac{1}{n}\mathbb{E}\left(\dfrac{\|U_{n,n}^{(1)}\|}{n^{1/p}}\right)^p<\infty,
	\end{align}
	and
	\begin{align}\label{pr15}
	\sum_{n=1}^\infty \dfrac{1}{n}\mathbb{E}\left(\dfrac{\|U_{n,n}^{(2)}\|}{n^{1/p}}\right)^p<\infty.
	\end{align}
	By \eqref{th29} in Lemma \ref{LQR01}, we have
	\[\sum_{n=1}^\infty \dfrac{1}{n^2}\mathbb{E}(\|U_{n,n}^{(1)}\|^p)<\infty\]
	which proving \eqref{pr13}. 
	Since $0<p<1$, it follows from \eqref{m03} (with $p=q$) that
	\begin{align*}
	\begin{split}
	\sum_{n=1}^\infty \dfrac{1}{n^2}\mathbb{E}\left(\|U_{n,n}^{(2)}\|^p\right)
	&= \sum_{n=1}^\infty \dfrac{1}{n^2}\mathbb{E}\left(\left\|\sum_{k=1}^n X_k\mathbf{1}\left( u_{n}<\|X_k\|\le n^{1/p}\right)\right\|^p\right)\\
	&\le \sum_{n=1}^\infty \dfrac{1}{n} \mathbb{E}\left(\left\|X\right\|^p\mathbf{1}\left( u_{n}<\|X\|\le n^{1/p}\right)\right)<\infty
	\end{split}
	\end{align*}
	which proving \eqref{pr15}.
\end{proof}

\begin{proof}[Proof of Theorem \ref{thm14} (Necessity)] By Proposition \ref{prop12}, we only need to prove for the case
	where $0<q=p<1$. Also by applying Proposition \ref{prop12}, we have from \eqref{th16} that
\[\lim_{n\to\infty}\dfrac{S_n}{n^{1/p}}=0\ \text{ a.s.,}\]
	which ensures $\mathbb{E}(\|X\|^p)<\infty$. Therefore, we only need to show that \eqref{th16} implies
	\begin{equation}\label{pr16}
	\sum_{n=1}^\infty \dfrac{\mathbb{E}(\|X\|^p\mathbf{1}(u_n^p<\|X\|^p\leq n))}{n}<\infty.
	\end{equation}
	Since $0<p<1,$
	\begin{align}\label{pr17}
	\begin{split}
	\sum_{n=1}^\infty \dfrac{1}{n^2}\mathbb{E}\left(\left\|U_{n,n}-\sum_{k=1}^nX_k \mathbf{1}(\|X_k\|^p\le k)\right\|^p\right)
	&\le \sum_{n=1}^\infty \dfrac{1}{n^2} \sum_{k=1}^n \mathbb{E}\left(\left\|X_k\right\|^p \mathbf{1}(k<\|X_k\|^p\le n)\right)\\
	& = \sum_{n=1}^\infty \dfrac{1}{n^2} \sum_{k=1}^n \sum_{j=k+1}^n \mathbb{E}\left(\left\|X\right\|^p \mathbf{1}(j-1<\|X\|^p\le j)\right)\\
	& \le \sum_{n=1}^\infty \dfrac{1}{n^2} \sum_{j=1}^n j\mathbb{E}\left(\left\|X\right\|^p \mathbf{1}(j-1<\|X\|^p\le j)\right)\\
	&= \sum_{j=1}^\infty j\mathbb{E}\left(\left\|X\right\|^p \mathbf{1}(j-1<\|X\|^p\le j)\right)\left(\sum_{n=j}^\infty \dfrac{1}{n^2} \right)\\
	&\le 2\sum_{j=1}^\infty \mathbb{E}\left(\left\|X\right\|^p \mathbf{1}(j-1<\|X\|^p\le j)\right)\\
	&=2\mathbb{E}\left(\|X\|^p\right)<\infty.
	\end{split}
	\end{align}
	Using the same argument of proof of (3.3) in \cite[p. 556]{LQR16}, we have
	\begin{equation}\label{pr19}
	\sum_{n=1}^\infty \dfrac{\mathbb{E}\left(\|\sum_{k=1}^{n}X_k\mathbf{1}(\|X_k\|^p\le k)\|^p\right)}{n^2}<\infty.
	\end{equation}
	Combining \eqref{pr17} and \eqref{pr19}, we have
	\begin{equation}\label{pr21}
	\sum_{n=1}^\infty \dfrac{\mathbb{E}\left(\left\|U_{n,n}\right\|^p\right)}{n^2}<\infty.
	\end{equation}
	It follows from \eqref{pr21} and \eqref{th29} of Lemma \ref{LQR01}  that
	\begin{equation}\label{pr26}
\begin{split}	
\sum_{n=1}^\infty \dfrac{\mathbb{E}\left(\|U_{n,n}^{(2)}\|^p\right)}{n^2}&=\sum_{n=1}^\infty \dfrac{\mathbb{E}\left(\|U_{n,n}-U_{n,n}^{(1)}\|^p\right)}{n^2}\\
	&\le \sum_{n=1}^\infty \dfrac{\mathbb{E}\left(\|U_{n,n}\|^p\right)}{n^2}+\sum_{n=1}^\infty \dfrac{\mathbb{E}\left(\|U_{n,n}^{(1)}\|^p\right)}{n^2}<\infty.
\end{split}	
\end{equation}
	Let $\left\{X^{'}, X_n^{'}, n\ge 1\right\}$ be an independent copy of $\left\{X, X_n, n \ge 1\right\}.$ For $n \ge 1, 1 \le k \le n,$ set
	\[V_{n,k}=X_k\mathbf{1}(u_n^{p}<\|X_k\|^p\le n)-X_k^{'}\mathbf{1}(u_n^{p}<\|X_k^{'}\|^p\le n),\ \hat{U}_{n,k}^{(2)}=\sum_{j=1}^{k}V_{n,j},\ \hat{U}_{n,0}^{(2)}=0.\]
	It follows from \eqref{pr26} that
	\begin{equation}\label{pr27}
	\sum_{n=1}^\infty \dfrac{\mathbb{E}\left(\|\hat{U}_{nn}^{(2)}\|^p\right)}{n^2}<\infty.
	\end{equation}
	For $n \ge 1$, applying L\'{e}vy’s inequality (see, e.g., \cite[p. 47-48]{LedouxTalagrand}) for independent symmetric random variables
	$\left\{V_{n,k}, 1 \le k \le n\right\}$, we have
	\begin{equation}\label{pr23}
	\begin{split}
	\mathbb{E}\left(\max_{1\le k\le n}\|V_{n,k}\|^p\right)
	&= \mathbb{E}\left(\max_{1\le k\le n}\left\|\hat{U}_{n,k}^{(2)}-\hat{U}_{n,(k-1)}^{(2)}\right\|^p \right)\\
	&\le 2\mathbb{E}\left(\max_{1\le k\le n}\left\|\hat{U}_{n,k}^{(2)}\right\|^p \right)\\
	&\le 4\mathbb{E}\left(\left\|\hat{U}_{n,n}^{(2)}\right\|^p \right).\\
	\end{split}
	\end{equation}
	Since
	\[\mathbb{P}(\|V_{n,1}\|^p>0)\le \mathbb{P}(\|X_1\|>u_{n})+\mathbb{P}(\|X_1^{'}\|>u_{n})<\dfrac{2}{n},\]
	by applying Lemma \ref{lem24} with the constant $K = 2$, we obtain
	\begin{equation}\label{pr29}
	\mathbb{E}(\|V_{n,1}\|^p)\le \dfrac{4}{n}\mathbb{E}\left(\max_{1\le k\le n}\|V_{n,k}\|^p\right)
	\end{equation}
	Combining \eqref{pr27}--\eqref{pr29}, we have
	\begin{equation}\label{pr30}
	\sum_{n=1}^{\infty}\dfrac{\mathbb{E}(\|V_{n,1}\|^p)}{n}\le\sum_{n=1}^{\infty}\dfrac{16\mathbb{E}\left(\|\hat{U}_{n,n}^{(2)}\|^p\right)}{n^2}<\infty.
	\end{equation}
	We see that $V_{n1}$ is a symmetrized version of $X_1\mathbf{1}(u_n^{p} <\|X_1\|^p\le n), n \ge 1$. Applying Lemma \ref{lem25} with $t = 1/n$ and $g(x) = \|x\|^p, x\in\mathbf{B}$, we have
	\begin{equation}\label{prr31}
	\mathbb{P}\left(\|X_1\|^p\mathbf{1}(u_n^{p} <\|X_1\|^p\le n)\le\dfrac{1}{n}\right)\mathbb{E}(\|X_1\|^p\mathbf{1}(u_n^{p} <\|X_1\|^p\le n))\le\mathbb{E}(\|V_{n,1}\|^p)+\dfrac{1}{n}.
	\end{equation}
	Since
	\begin{align*}
	1-\dfrac{1}{n}&\le \mathbb{P}(\|X\|\le u_n)\le \mathbb{P}\left(\|X\|^p\mathbf{1}(u_n^{p} <\|X\|^p\le n)\le\dfrac{1}{n}\right)\\
	&=\mathbb{P}\left(\|X_1\|^p\mathbf{1}(u_n^{p} <\|X_1\|^p\le n)\le\dfrac{1}{n}\right)\ \text{ for all }\ n\ge 1,
	\end{align*}
	it follows from \eqref{prr31} that
	\begin{equation}\label{phr32}
	\left(1-\dfrac{1}{n}\right)\mathbb{E}(\|X\|^p\mathbf{1}(u_n^{p}<\|X\|^p\le n))\le\mathbb{E}(\|V_{n1}\|^p)+\dfrac{1}{n}, n\ge 1.
	\end{equation}
	Combining \eqref{pr30} and \eqref{phr32}, we have
	\begin{equation*}
	\begin{split}
	\sum_{n=2}^{\infty}\dfrac{1}{2n}\mathbb{E}(\|X\|^p\mathbf{1}(u_n^{p}<\|X\|^p\le n))&\le\sum_{n=2}^{\infty}\dfrac{1}{n}\left(1-\dfrac{1}{n}\right)\mathbb{E}(\|X\|^p\mathbf{1}(u_n^{p}<\|X\|^p\le n))\\
	&\le\sum_{n=2}^{\infty}\dfrac{1}{n}\left(\mathbb{E}\left(\|V_{n,1}\|^{p}\right)+\dfrac{1}{n}\right)<\infty
	\end{split}
	\end{equation*}
	thereby completing the proof of \eqref{pr16}.
\end{proof}
\begin{proof}[Proof of Theorem \ref{thm15}] Since $0<q<1\le p<2$, the necessity follows immediately from Proposition \ref{prop12}
	and the fact that \eqref{th14} implies $\E(X)=0$. 
	The sufficiency follows from the implication (i)$\Rightarrow$(ii) of Theorem \ref{thm:stable11} in Section \ref{sec:stable}.
\end{proof}

\begin{proof}[Proof of Corollary \ref{cor01}]
Recalling Proposition \ref{prop12}, if $0<q<p<2$, then \eqref{m09} is equivalent to $X\in\SLLN(p,q)$.
	Therefore, the case where $0<q<p<1$ follows from Theorem \ref{thm14}, and the case where $0<q<1\le p<2$ follows from Theorem \ref{thm15}.
	
	We now consider the case where $0<q=p<1$. If \eqref{m09} holds, then by applying Proposition \ref{prop12} again, we obtain \eqref{m11} (with $q=p$).
	Conversely, if \eqref{m11} (with $q=p$) holds, then by following the proof of Lemma 5.6 of Li, Qi, and Rosalsky \cite{LQR11} with $\|X\|^p$ in the
	place of $\|X\|$, we obtain 
	\[\sum_{n=1}^\infty \dfrac{\mathbb{E}\left(\|X\|^p \mathbf{1}(\min\{u_{n}^p,n\}< \|X\|^p\le n)\right)}{n}<\infty.\]
	Therefore, \eqref{m03} (with $q=p$) holds, and by applying Theorem \ref{thm14}, we have $X\in\SLLN(p,p)$, i.e., \eqref{th16} holds with $q=p$.
	The conclusion \eqref{m09} then follows from Proposition \ref{prop12}.
\end{proof} 

We close this section by presenting three simple examples to illustrate Theorem \ref{thm14}, as mentioned in Remark \ref{rem01}. 
The first example shows that, for $0<p<1$, there exists a random variable $X$ such that $X\in\SLLN(p,p)$ but $X\notin\SLLN(p,q)$ for all $0<q<p$.

\begin{example}\label{t01} 
		Let $0<p<1$. For $q>0$, let $X$ be a real-valued random variable such that
		its tail probability function is
		\[\P\left(X>t\right)={\mathbf{1}}\{t\le\mathrm{e}\}+\dfrac{{\mathrm{e}}^q}{t^q(\ln t)^{2p/q}}{\mathbf{1}}\{t>\mathrm{e}\},\ t\in\R .\]
		Then for all $t>\mathrm{e}^q$, we have
		\[\P\left(|X|^q>t\right)=\P(X>t^{1/q})=\dfrac{{\mathrm{e}}^q}{t(\ln t^{1/q})^{2p/q}}.\]
		Therefore
		\begin{equation*}
		\int_{0}^{\infty}\P^{q/p}(|X|^q>t)\mathrm{d} t
		=\infty\ \text{ if }\ q<p.
		\end{equation*}
		For $q=p$, elementary calculus also shows that
		\begin{equation}\label{r07}
		\E\left(|X|^p \ln^{1/2} (1+|X|^p)\right)<\infty.
		\end{equation}
		The proof of Lemma 5.6 of Li, Qi, and Rosalsky \cite{LQR11} shows that, for any random variable $X$, if 
		\[\E\left(\|X\| \ln^{\delta} (1+\|X\|)\right)<\infty\ \text{ for some }\ \delta>0,\]
		then 
		\[\sum_{n=1}^\infty \dfrac{\mathbb{E}\left(\|X\| \mathbf{1}(\min\{u_{n},n\}< \|X\|\le n)\right)}{n}<\infty.\]
		It thus follows from \eqref{r07} that \eqref{m03} holds for $q=p$.
		By Theorem \ref{thm14} we see that, for this example,  $X\in\SLLN(p,p)$ but $X\notin\SLLN(p,q)$ for all $0<q<p$.
\end{example}

The next two examples show that each of the two conditions that
appeared in \eqref{m03} (for the case where $p=q$) does not imply each other. 
Examples \ref{t03} and \ref{t05} are inspired by Examples 5.2 and 5.3 of Li, Qi, and Rosalsky \cite{LQR11}, respectively.

\begin{example}\label{t03}

		Let $0<p<1$ and let $X$ be a real-valued random variable such that its tail probability function is
		\[\P\left(X>t\right)={\mathbf{1}}\{t\le {\mathrm{e}}^{\mathrm{e}}\}+\dfrac{{\mathrm{e}}^{\mathrm{e}p+1}}{t^p(\ln t)(\ln \ln t)^2}{\mathbf{1}}\{t>{\mathrm{e}}^{\mathrm{e}}\},\ t\in\R.\]
		Then $\E(|X|^p)<\infty$ and by the same calculation as in Lemma 5.2 of Li, Qi, and Rosalsky \cite{LQR11}, we have
		\[\sum_{n=1}^\infty \dfrac{\mathbb{E}\left(|X|^p \mathbf{1}(\min\{u_{n}^p,n\}<|X|^p\le n)\right)}{n}=\infty.\]
		
\end{example}

\begin{example}\label{t05}
		Let $0<p<1$ and let $X$ be a real-valued random variable such that its tail probability function is
		\[\P\left(X>t\right)={\mathbf{1}}\{t\le 1\}+\dfrac{1}{t^p}{\mathbf{1}}\{t>1\},\ t\in\R.\]
		Then $\E(|X|^p)=\infty$ and 
		\[\sum_{n=1}^\infty \dfrac{\mathbb{E}\left(|X|^p \mathbf{1}(\min\{u_{n}^p,n\}< |X|^p\le n)\right)}{n}=0\]
		since $u_{n}^p=n$.  
\end{example}

\section{Further remarks}\label{sec:remark}
This work has been devoted to $(p,q)$-type SLLN and related results for one-parameter processes.
	As noted by Khoshnevisan \cite{Khoshnevisan}, 
	``there are a number of compelling reasons for studying random fields,
	one of which is that, if and when possible, multiparameter processes are a
	natural extension of existing one-parameter processes''.
	Some of the tools used in this paper such as the generalization
	of Ottaviani’s inequality developed by Li and Rosalsky \cite{LiRosalsky13}
	or Lemma \ref{RTV} are available for
	multiparameter processes (see \cite{ATT,ThanhThuy}), but it is unclear whether
	the methods of this paper can be pushed through.

\end{document}